\theoremstyle{plain}
\newtheorem{thm}{Theorem}[section]
\newtheorem*{thm*}{Theorem}
\newtheorem{prop}[thm]{Proposition}
\newtheorem*{prop*}{Proposition}
\newtheorem*{conj*}{Conjecture}
\theoremstyle{definition}
\newtheorem{defin}[thm]{Definition}
\newtheorem{cor}[thm]{Corollary}
\newtheorem*{cor*}{Corollary}
\newtheorem{lem}[thm]{Lemma}
\theoremstyle{remark}
\newtheorem{rem}[thm]{Remark}
\newtheorem*{rem*}{Remark}
\newtheorem*{notation*}{Notation}
\newtheoremstyle{citing}% name
  {\topsep}%      Space above, empty = `usual value'
  {\topsep}%      Space below
  {\itshape}% Body font
  {}%         Indent amount (empty = no indent, \parindent = para indent)
  {\bfseries}% Thm head font
  {.}%        Punctuation after thm head
  {.5em}%     Space after thm head: " " = normal interword space;
\theoremstyle{citing}
\newcommand{\field}[1]{\mathbb{#1}}
\newcommand{\R}{\field{R}}
\newcommand{\N}{\field{N}}
\newcommand{\Z}{\field{Z}}
\newcommand{\ron}[1]{\mathcal{#1}}
\newcommand{\tq}{\,;\,}
\newcommand{\nr}[1]{\left\Vert #1\right\Vert}
\newcommand{\restr}[2]{\left.#1\right|_{#2}}
\DeclareMathOperator{\card}{Card}
\DeclareMathOperator{\orb}{Orb}
\DeclareMathOperator{\id}{id}
\newcommand{\ie}{{\emph{i.e.~}}}
\newcommand{\eps}{\varepsilon}
\renewcommand{\phi}{\varphi}
\renewcommand{\bar}[1]{\overline{#1}}
\newcommand{\ra}{\rightarrow}
\newcommand{\lra}{\longrightarrow}
\newcommand{\A}{\ron A}
\newcommand{\pe}{\mathrm{pe}}
\newcommand{\Ho}{{\mathrm{H}}}   % Homology / cohomology
\title{Complexity as a homeomorphism invariant for tiling spaces}
\date{January 2014}
\author{Antoine Julien\footnote{Part of the early work on this topic was realized during a previous post-doc at the University of Victoria, under the supervision of Ian Putnam, partially funded by the Pacific Institute for the Mathematical Sciences.} \\
\small{Department of Mathematical Sciences} \\
\small{Norwegian University of Science and Technology (NTNU)} \\
\small{N-7034 Trondheim, Norway}\\
\small{antoine.julien@math.ntnu.no} }
\begin{document}

\maketitle

\begin{abstract}
 It is proved that whenever two aperiodic repetitive tilings with finite local complexity have homeomorphic tiling spaces, their associated complexity functions are asymptotically equivalent in a certain sense (which implies, if the complexity is polynomial, that the exponent of the leading term is preserved by homeomorphism).
 This theorem can be reworded in terms of $d$-dimensional infinite words: if two $\Z^d$-subshifts (with the same conditions as above) are flow equivalent, their complexity functions are equivalent.
 An analogue theorem is stated for the repetitivity function, which is a quantitative measure of the recurrence of orbits in the tiling space.
 How this result relates to the theory of tilings deformations is outlined in the last part.
\end{abstract}

\tableofcontents

\section{Introduction}

 This article is concerned with the study of aperiodic tilings. These tilings, such as the well-known Penrose tilings, provide good models for \emph{quasicrystals} in physics.
 The tilings in which we are interested display two \emph{a priori} antagonistic properties: a highly ordered structure (in the form of uniform repetition of patches), and aperiodicity.

 A tiling is a covering of the plane by geometric shapes (tiles), with no holes and no overlap.
 In this paper, tilings are assumed to be made using finitely many tiles types up to translation.
 Furthermore, it is assumed that any two tiles can be fitted together in only finitely many ways---for example it could be assumed that tiles meet full-face to full-face in the case of polytopal tiles. This last condition is known as \emph{finite local complexity}.
 Finally, tilings we are dealing with are assumed to be \emph{repetitive} in the sense that any finite patch of a given tiling repeats within bounded distance of any point in the tiling (these definitions are given with more precision and quantifiers in Section~\ref{sec:prelim}).

 An aperiodic tiling with the above properties has a topological compact space associated with it. It consists of all tilings which are indistinguishable from it at a local scale, and it supports a $\R^d$-action (for tilings of dimension $d$), induced by translations.
 It is possible to gain a better understanding of such tiling spaces by using an analogy with subshifts: a multi-dimensional infinite word $w \in \ron A^{\Z^d}$ has a subshift associated with it. It is the closure in $\ron A^{\Z^d}$ of the $\Z^d$-orbit of $w$ under the shift (or translation) map.
 The word $w$ can also be interpreted as a tiling by cubes, the colours of which are indexed by $\ron A$. The tiling space of this tiling by cubes then corresponds exactly to the \emph{suspension} of the subshift (which for $d=1$ is also known as the mapping torus of the shift map).
 This suspension contains the subshift as a closed subset, and supports an $\R^d$-action which extends the $\Z^d$-action on the subshift.
 
 We see here that if a subshift is minimal (which corresponds exactly to the condition cited above on repetition of patches), all elements in the subshifts have the same \emph{language}, \ie the same set of finite subwords.
 In the same way, if a tiling space is minimal, it make more sense to study the space rather than to particularise an arbitrarily chosen tiling.
 
 This philosophy of studying the space rather than the tiling (or the subshift rather than the word) is far from new, and has been quite successful.
 One can specifically cite the gap-labeling theorems, which relate the topological invariants of the space on the one hand (the ordered $K_0$-group), and the gaps in the spectrum of a certain Schr\"odinger operator with aperiodic potential on the other hand.
 Any attempt to give complete references for this problem would be unfair; we will therefore give a very incomplete list of references in the form of the papers~\cite{BHZ00,Kel95,BBG06} and their references.

 A natural question, however, which results directly from this ``space-over-tiling'' approach is the following: \emph{whenever a relation is established between two tiling spaces, to which extent does it translate back in terms of tilings?}

 In this paper, we start with two tiling spaces which are equivalent in the realm of topological spaces, \ie homeomorphic. We then investigate what are the consequences on the underlying tilings.
 The main results states that whenever two tiling spaces are homeomorphic, their respective complexity functions, as well as their repetitivity functions are equivalent in some sense.
 
 Given a tiling (or a word), the complexity function is a map $r \mapsto p(r)$, where $p(r)$ counts the number of distinct patches of size $r$, up to translation.
 This function was first studied in the framework of symbolic dynamics (one-dimensional subshifts).
 In their seminal paper, Morse and Hedlund~\cite{MH38} define $p(n)$ as the number of subwords of size~$n$ in a given bi-infinite word.
 This function provides a good measure of order and disorder in a word: if $p(n)$ doesn't grow at least linearly, then the word is periodic.
 This result was generalized outside of the symbolic setting, in higher dimension: if the complexity function associated with a tiling doesn't grow at least linarly, then the tiling is completely periodic \ie it has $d$ independent periods.
 One can refer to Lagarias--Pleasants~\cite{LP03} for a proof of this result in the setting of Delaunay sets of~$\R^d$.

 The first main theorem of this article describes how the complexity function is preserved whenever two tiling spaces are homeomorphic.
 \begin{thm*}[(Theorem~\ref{thm:main})]
  If $h:\Omega \lra \Omega'$ is a homeomorphism between two tiling spaces of aperiodic and repetitive tilings with finite local complexity, then the associated complexity functions $p$ and $p'$ satisfy the following inequalities:
  \[
    c p (mr) \leq p' (r) \leq C p (Mr),
  \]
  for some constants $c,C,m,M>0$ and all $r$ big enough.
 \end{thm*}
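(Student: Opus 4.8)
The plan is to exploit the fact that, although $h$ need not intertwine the two $\R^d$-actions, it is rigid enough on the level of leaves to force the two pattern-counting functions to agree up to a linear rescaling of the radius. The first step is to note that the arc-components of such a tiling space are exactly the $\R^d$-orbits: since the canonical transversal is totally disconnected and $\Omega$ is locally a product of a Euclidean disc with a transversal, the path-component of a point $T$ is its orbit $\R^d\cdot T$, which by aperiodicity is a faithful copy of $\R^d$. A homeomorphism preserves arc-components, so $h$ carries orbits to orbits and induces, after normalising $\tilde h_T(0)=0$ on each leaf, a family of homeomorphisms $\tilde h_T:\R^d\lra\R^d$ determined by $h(v\cdot T)=\tilde h_T(v)\cdot h(T)$. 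A direct computation gives the cocycle relation $\tilde h_{v\cdot T}(w)=\tilde h_T(v+w)-\tilde h_T(v)$, which will let me transport local information along each leaf.

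The heart of the argument, and the step I expect to be the main obstacle, is to show that the $\tilde h_T$ are quasi-isometries of $\R^d$ with constants uniform in $T$. Uniform continuity of $h$ (from compactness of $\Omega$) yields, for some small $\eta$, a single-scale estimate: when $\abs{v-v'}\le\eta$ the tilings $v\cdot T$ and $v'\cdot T$ are $\Omega$-close, hence so are their images, so $\tilde h_T(v)\cdot hT$ and $\tilde h_T(v')\cdot hT$ agree on a ball $B(0,1/\eps_0)$ up to a translation of size $\le\eps_0$. The difficulty is that two points lying far apart on one leaf can be $\Omega$-close, so this does not immediately bound the leaf displacement $\abs{\tilde h_T(v)-\tilde h_T(v')}$. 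I will resolve this with a gap property coming from aperiodicity and finite local complexity: the smallest nonzero $R$-return vector $\gamma(R)$ of a tiling in $\Omega$ tends to infinity with $R$, uniformly over $\Omega$ by minimality. Feeding the single-scale estimate into this gap, $\abs{\tilde h_T(v)-\tilde h_T(v')}$ is forced into $[0,\eps_0]\cup[\gamma(1/\eps_0)-\eps_0,\infty)$; since it is continuous in $v'$, vanishes at $v'=v$, and the two intervals are disjoint for small $\eps_0$, it must stay in the bounded piece for all $\abs{v-v'}\le\eta$, uniformly in $T$. Chaining this uniform local bound along a segment gives $\abs{\tilde h_T(v)-\tilde h_T(v')}\le K\abs{v-v'}+\eps_0$, and the same reasoning applied to $h^{-1}$ (noting $(\tilde h_T)^{-1}=\widetilde{h^{-1}}_{hT}$) supplies the matching lower bound, so each $\tilde h_T$ is a uniform quasi-isometry.

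With the quasi-isometry available, the remaining step is to promote the single-scale local derivability of $h$ to one that is linear in the radius, and then to count. Uniform continuity gives one constant $\rho_0$ such that $hT$ restricted to the unit ball, up to a small translation, is determined by $T\big|_{B(0,\rho_0)}$. To recover $hT$ on $B(0,r)$ I cover it by unit balls centred at points $y=\tilde h_T(x)$, read each one off from $h(x\cdot T)$ near the origin via the cocycle relation, and use the quasi-isometry to bound $\abs{x}\le K'r+C'$ exactly when $\abs{y}\le r$. Hence $hT\big|_{B(0,r)}$ is determined, up to a translation of bounded size, by $T\big|_{B(0,Mr)}$ for a constant $M$ built from the quasi-isometry constants and $\rho_0$. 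Counting patches, this linear determination gives $p'(r)\le C\,p(Mr)$, and the identical argument for $h^{-1}$ yields $c\,p(mr)\le p'(r)$, which are the claimed inequalities. The only genuinely delicate point is the uniform quasi-isometry of the leaf maps; once it is in hand, the passage to complexity is bookkeeping with the $\R^d$-action and the finite-local-complexity patch count, the bounded translation ambiguities contributing only to the multiplicative constants $c,C$.
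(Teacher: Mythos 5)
Your overall architecture matches the paper's: orbits are the path components, so $h$ induces leaf maps $\tilde h_T$ satisfying a cocycle relation, and the heart of the matter is a uniform Lipschitz-type control on these maps, after which one compares patches. Your route to the quasi-isometry bound (uniform continuity plus the fact that the minimal nonzero $R$-return vector length tends to infinity uniformly, then a connectedness argument to stay in the bounded branch of the dichotomy, then chaining) is genuinely different from the paper's, which first \emph{straightens} $h$ by an $\eps$-small perturbation $h_\eps$ so that it maps a canonical transversal $\Xi$ onto a canonical transversal $\Xi'$, and then gets the Lipschitz bound from a Delaunay triangulation with vertex set $\ron D(T)$ and a piecewise-affine interpolation with finitely many linear pieces by FLC. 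Your argument looks sound for that step (modulo the fact that continuity of $x\mapsto\tilde h_T(x)$, which your intermediate-value step needs, is itself not free: the orbit with the subspace topology is not locally homeomorphic to $\R^d$, and the paper spends a page proving it via the local product structure).

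The genuine gap is in the step you dismiss as ``bookkeeping.'' To deduce $p'(r)\le C\,p(Mr)$ you need the assignment $T|_{B(0,Mr)}\mapsto hT|_{B(0,r)}$ to be well defined up to a single bounded translation, i.e.\ if $T_1$ and $T_2$ agree exactly on $B(0,Mr)$ then $hT_1$ and $hT_2$ agree on $B(0,r)$ up to a controlled translation. Your covering argument only shows that, for each centre $x_j$, the unit patch of $hT_i$ at $y_j^{(i)}=\tilde h_{T_i}(x_j)$ is determined (up to a small translation) by $T_i$ near $x_j$; but nothing so far forces $\tilde h_{T_1}(x_j)=\tilde h_{T_2}(x_j)$, nor forces the various small per-ball translations to be consistent, so the two image patches could a priori agree piece by piece yet at different locations. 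This is exactly the obstruction the paper flags (``there is no guarantee that $h_{T_1}(x)=h_{T_2}(x)$'') and resolves with machinery absent from your proposal: the approximation lemma producing $h_\eps$ with $h_\eps(\Xi)=\Xi'$ (so that return vectors of $\Xi$ map to return vectors of $\Xi'$, a \emph{discrete} set), the resulting local constancy of $T\mapsto h_{\eps,T}(v)$ for short return vectors $v$ (continuous map from a compact set to a discrete set), and an algorithm decomposing $x$ into short return-vector jumps along which the images agree exactly for $T_1$ and $T_2$. Note that your gap-property dichotomy cannot substitute here: it compares a tiling with a translate of \emph{itself}, whereas this step requires comparing two \emph{different} tilings whose images are only known to be close near the origin on a ball whose radius is not quantitatively controlled by uniform continuity. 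Without some version of this exact-propagation mechanism, the counting does not close.
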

 
 A few remarks:
 \begin{enumerate}
  \item If the functions $p$ and $p'$ are at least linear (which is the case for non-periodic tilings, by the Morse--Hedlund theorem), then $c$ and $C$ can be taken equal to $1$ (up to adjusting the values of $m$ and $M$);
  \item On the opposite, if the functions $p$ and $p'$ grow at most polynomially, then $m$ and $M$ can be chosen equal to $1$ (up to adjusting $c$ and $C$). In particular, if $p$ grows like a polynomial, then so does $p'$, and the exponent of the leading term is the same.
 \end{enumerate}

 \begin{cor*}
  Let $T$ be an aperiodic, repetitive tiling with finite local complexity. Assume that its complexity function grows asymptotically like $r^\alpha$, up to a multiplicative constant. Then the exponent $\alpha$ is a topological invariant of the tiling space associated with~$T$.
 \end{cor*}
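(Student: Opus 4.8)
The plan is to deduce the corollary directly from Theorem~\ref{thm:main}, the essential content being that the class of functions comparable to $r^\alpha$ is stable under the dilation of arguments that the theorem introduces. First I would fix notation: let $\Omega$ be the tiling space of $T$, with complexity function $p$, and suppose its growth is $p(r) \asymp r^\alpha$, by which I mean there are constants $0 < a \le b$ with $a\, r^\alpha \le p(r) \le b\, r^\alpha$ for all $r$ large enough. To say that $\alpha$ is a topological invariant of $\Omega$ then amounts to the following statement: for every aperiodic, repetitive tiling $T'$ with finite local complexity whose tiling space $\Omega'$ is homeomorphic to $\Omega$, the complexity function $p'$ of $T'$ also satisfies $p'(r) \asymp r^\alpha$, so that its leading exponent equals $\alpha$.

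Next, given a homeomorphism $h:\Omega \lra \Omega'$, I would invoke Theorem~\ref{thm:main} to obtain constants $c,C,m,M>0$ with
\[
 c\, p(mr) \leq p'(r) \leq C\, p(Mr)
\]
for all $r$ big enough. Substituting the two-sided polynomial bound on $p$, evaluated at the dilated arguments $mr$ and $Mr$, yields
\[
 c\,a\,m^{\alpha}\, r^{\alpha} \;\leq\; p'(r) \;\leq\; C\,b\,M^{\alpha}\, r^{\alpha},
\]
using $p(mr) \geq a(mr)^\alpha = a\,m^\alpha r^\alpha$ and $p(Mr) \leq b(Mr)^\alpha = b\,M^\alpha r^\alpha$. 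The key observation is that the dilation factors $m$ and $M$ enter only through the multiplicative constants $m^\alpha$ and $M^\alpha$, which can be absorbed into $c,a$ and $C,b$ respectively; this is precisely the phenomenon recorded in the second of the remarks following Theorem~\ref{thm:main}. Hence $p'(r) \asymp r^\alpha$, and the exponent of the leading term of $p'$ is again $\alpha$.

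I do not anticipate a genuine obstacle, since all the analytic work is already carried out in Theorem~\ref{thm:main}; the corollary reduces to the elementary remark that $r \mapsto mr$ preserves comparability with $r^\alpha$ up to a constant. The only point deserving a moment of care is the direction of the comparison: the theorem bounds $p'$ in terms of $p$, which is exactly what is needed to transfer the growth rate from $T$ to $T'$. Should one wish to phrase the invariance symmetrically, one may also apply the theorem to $h^{-1}$ to bound $p$ in terms of $p'$; in either case the two exponents are forced to coincide.
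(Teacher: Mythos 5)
Your proof is correct and follows exactly the route the paper intends: the corollary is an immediate consequence of Theorem~\ref{thm:main} together with the observation (recorded in the paper's second remark after the theorem) that for polynomially growing complexity the dilation factors $m$ and $M$ only contribute multiplicative constants $m^\alpha$ and $M^\alpha$, which are absorbed into $c$ and $C$. Nothing is missing.
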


 The repetitivity function was also introduced by Morse and Hedlund~\cite{MH38} in the setting of one-dimensional symbolic dynamics.
 The repetitivity function for a tiling $T$ is defined as follows: for $r> 0$, $\ron R(r)$ is the infimum of all numbers $c$ which satisfy that any patch of radius $c$ in $T$ contains a copy of all patches of radius~$r$ which appear somewhere in~$T$.
 It is well defined and finite for all $r$ exactly when the tiling $T$ is \emph{repetitive} \ie satisfies the aforementioned condition on repetition of patches.
 It is a quantitative measure of the patch-repetition property.

 It can also be seen as a measure of order and disorder in a tiling dynamical system, which is distinct from the complexity, though related, and is sometimes finer.
 For example, if $\ron R(r)$ doesn't grow at least linearly, the tiling has $d$ independent periods~\cite{LP03}.
 Low repetitivity implies low complexity: Lenz~\cite{Len04} proved that if $\ron R$ is bounded above by a linear function, then the complexity function of the tiling is bounded above by $C r^d$ for some constant~$C$.
 On the other hand, it is possible to build some tilings which have low complexity (sub-linear in dimension~$1$), but have a repetitivity function which grows much faster than linearly.

 \begin{thm*}[(Theorem~\ref{thm:repet})]
  If $h:\Omega \lra \Omega'$ is a homeomorphism between two tiling spaces of aperiodic and repetitive tilings with finite local complexity, then the associated repetitivity functions $\ron R$ and $\ron R'$ satisfy
  \[
    c \ron R (mr) \leq \ron R' (r) \leq C \ron R (Mr)
  \]
  for some constants $c,C,m,M>0$ and all $r$ big enough.
 \end{thm*}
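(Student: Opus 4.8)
The plan is to run the argument in close parallel with the proof of Theorem~\ref{thm:main}, reusing the structural information about $h$ but feeding it into the recurrence statement that defines $\ron R$ rather than into a counting argument.

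First I would recall the structural backbone already in place for the complexity theorem. Because $\Omega$ and $\Omega'$ are spaces of aperiodic FLC tilings, they carry a local product structure $\R^d \times (\text{Cantor set})$, so their arc-components are exactly the $\R^d$-orbits. A homeomorphism preserves arc-components, hence $h$ maps orbits to orbits and induces on each orbit a homeomorphism $\R^d \lra \R^d$. By uniform continuity of $h$ and $h^{-1}$ on the compact spaces $\Omega,\Omega'$, these orbit maps have uniformly controlled large-scale distortion, which I package (as in the complexity proof) into a \emph{matching lemma}: there exist constants $M,M'>0$ such that for $r$ large and any $T_1,T_2 \in \Omega$, if $T_1$ and $T_2$ carry the same patch of radius $Mr$ about the origin (up to a small translation), then $hT_1$ and $hT_2$ carry the same patch of radius $r$ about the origin (up to a small translation); symmetrically for $h^{-1}$ with its own constants. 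Equivalently, an $r$-patch of $\Omega'$ pulls back under $h$ to a condition detected by radius-$Mr$ patches in $\Omega$.

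Next I would translate the definition of $\ron R$ into this language. The statement $\ron R(Mr) \leq \rho$ means every $Mr$-patch of the tiling recurs within every ball of radius $\rho$; dynamically, every radius-$Mr$ cylinder of the transversal is met along every orbit with gaps bounded by $\rho$. Fixing an $r$-patch $P'$ occurring in $\Omega'$ and an arbitrary $T'' \in \Omega'$, I want an occurrence of $P'$ inside every ball of radius $\rho' = C\,\ron R(Mr)$ of $T''$. Pulling back by $h^{-1}$, the matching lemma identifies occurrences of $P'$ with occurrences of a finite family of patches of radius $\leq Mr$ in the corresponding tiling of $\Omega$. Repetitivity there guarantees such occurrences within every ball of radius $\ron R(Mr)$; transporting them forward by $h$ and using the uniform orbit-distortion bound $\|\tau(t)\| \leq M\|t\| + M'$ converts the gap $\ron R(Mr)$ into a gap at most $M\,\ron R(Mr)+M'$ in $T''$. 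For $r$ large this is at most $C\,\ron R(Mr)$, giving $\ron R'(r) \leq C\,\ron R(Mr)$. Running the identical argument with $h^{-1}$ in place of $h$ yields $\ron R(r) \leq C'\,\ron R'(M'r)$, which I would rearrange into the stated lower bound $c\,\ron R(mr) \leq \ron R'(r)$.

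The main obstacle is this last transport step: unlike in Theorem~\ref{thm:main}, where the matching lemma is used only to compare \emph{counts} of equivalence classes, here I must track \emph{where} an occurrence sits along an orbit and certify that its image lands in a ball of the predicted radius, uniformly over all tilings and all patches $P'$. The difficulty is that the orbit reparametrization $\tau$ is merely a bounded-distortion homeomorphism of $\R^d$, not a genuine translation, so a copy of $P'$ is produced by $h$ only up to a bounded positional error, and the clopen set $h^{-1}[P']$ may be a union of several radius-$\leq Mr$ cylinders rather than a single one. Verifying that the syndeticity constant survives with only a multiplicative loss — that every ball of radius $M\,\ron R(Mr)+M'$ genuinely contains a full copy of $P'$ and not merely a fragment — is where the uniform large-scale control of the matching lemma, together with the fact that repetitivity is an orbit-uniform property invariant under bounded reparametrization, must be used with care.
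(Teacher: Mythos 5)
Your proposal is correct and follows essentially the same route as the paper: the matching lemma extracted from the proof of Theorem~\ref{thm:main} shows that an $r$-cylinder of one transversal pulls back to a finite union of $Mr$-cylinders of the other, repetitivity is applied in the source space, and the resulting return vector is transported through the orbit cocycle using the linear bound of Lemma~\ref{lem:quasi-lipschitz} and Corollary~\ref{cor:lipschitz}, with the reverse inequality obtained by exchanging $h$ and $h^{-1}$. The ``care'' you flag at the transport step is precisely what the paper's Lemma~\ref{lem:redressement} supplies: replacing $h$ by the nearby isotopic homeomorphism $h_\eps$, which maps a vertical transversal exactly onto a vertical transversal, removes the bounded positional error and makes cylinder sets map to unions of cylinder sets on the nose.
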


 In the literature, it is more common to make the qualitative distinction between linearly repetitive tiling spaces and those which are not.
 A tiling space is linearly repetitive (or linearly recurrent) if its repetitivity function satisfies $\ron R(r) \leq \lambda r$ for some constant~$\lambda$.
 Linearly repetitive tiling spaces enjoy nice properties (for example they are uniquely ergodic), and many of the actual models for quasicrystals are linearly repetitive.
 This property is preserved by homeomorphism
 
 \begin{cor*}
  If two tiling spaces satisfying the assumption of the previous theorem are homeomorphic, then either they are both linearly repetitive, or none is.
 \end{cor*}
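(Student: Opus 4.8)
The plan is to read the corollary off directly from the two-sided estimate of Theorem~\ref{thm:repet}, exploiting the fact that linear repetitivity is by definition a statement about the asymptotic growth of the repetitivity function. The statement is symmetric in $\Omega$ and $\Omega'$, so I would only prove one implication, say that if $\Omega$ is linearly repetitive then so is $\Omega'$; the converse follows by applying the same argument to the homeomorphism $h^{-1}\colon \Omega' \lra \Omega$ (equivalently, by invoking the left-hand inequality $c\,\ron R(mr) \leq \ron R'(r)$ in place of the right-hand one).

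So suppose $\ron R(r) \leq \lambda r$ for some $\lambda>0$ and all $r$ large enough. The upper bound of Theorem~\ref{thm:repet} gives $\ron R'(r) \leq C\,\ron R(Mr)$ for all $r$ big enough; substituting the linear bound for $\ron R$ then yields $\ron R'(r) \leq C\lambda M\, r$ for all $r$ big enough. Since replacing the argument of a linearly bounded function by $Mr$ and its value by a factor $C$ again produces a linearly bounded function, this is exactly the assertion that $\Omega'$ is linearly repetitive. The only point deserving a word of care is the passage from ``for all $r$ big enough'' to a bound valid for every $r$, should one insist on the defining inequality $\ron R'(r) \leq \lambda' r$ holding on the whole range rather than merely asymptotically. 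This is routine: $\ron R'$ is finite (because $\Omega'$ is repetitive) and non-decreasing, so on the bounded interval below the threshold it is dominated by its value at the threshold while $r$ stays bounded below by the smallest relevant length scale; one simply enlarges $\lambda'$ to absorb this finite initial segment.

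I do not anticipate any genuine obstacle, as the corollary is a direct specialisation of the theorem: all the content is carried by the inequality $\ron R'(r) \leq C\,\ron R(Mr)$, and the proof reduces to the elementary observation that the class of functions bounded above by a linear function is stable under the rescalings $r \mapsto Mr$ and $\ron R \mapsto C\,\ron R$ that the theorem introduces.
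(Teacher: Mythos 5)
Your proposal is correct and is exactly the argument the paper intends: the corollary is stated as an immediate consequence of Theorem~\ref{thm:repet}, and the paper gives no further proof beyond noting that it "follows from the theorem." Your substitution of the linear bound into the two-sided estimate, the symmetry via $h^{-1}$ (or the other inequality), and the remark about absorbing the finite initial range using monotonicity and finiteness of $\ron R'$ are all as expected.
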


 The results above show that there exist a significant relationship between the topology and the dynamics of a tiling space on the one hand, and its complexity and repetitivity function on the other hand.
 This is \emph{a priori} surprising, since the complexity and repetitivity do not stand out naturally as being topological objects.
 The repetitivity function clearly is strongly related to the dynamics on the tiling space, as it can be interpreted in terms of maximal size of return vectors on a transversal, or as a measure of the recurrence of orbits in the dynamical system.

 The complexity function can be related with the dynamics of the action of $\R^d$ on the tiling space %, providing meaningful information in some cases.
 Given a tiling space, it is possible to define the \emph{patch-counting entropy} (or configurational entropy) by the formula:
 \[
  H_\mathrm{pc} (\Omega) = \limsup_{r \ra +\infty} \frac{\log p(r)}{r^d}.
 \]
 It was proved to be equal to the topological entropy of $(\Omega,\R^d)$ when $\Omega$ is a tiling space with finite local complexity (Baake--Lenz--Richard~\cite{BLR07}).
 In particular, using the results above, it is possible to state the following result.
 \begin{prop*}
  Let $\Omega$, $\Omega'$ be two aperiodic, repetitive, FLC tiling spaces which are homeomorphic. Then $(\Omega,\R^d)$ has positive topological entropy if and only if $(\Omega',\R^d)$ does.
 \end{prop*}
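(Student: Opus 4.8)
The plan is to combine the complexity comparison of Theorem~\ref{thm:main} with the identification, due to Baake--Lenz--Richard, of the patch-counting entropy $H_\mathrm{pc}$ with the topological entropy of the $\R^d$-action. Since the topological entropy of $(\Omega,\R^d)$ is positive exactly when $H_\mathrm{pc}(\Omega)>0$, and likewise for $\Omega'$, it suffices to show that the two-sided inequality
\[
  c\,p(mr) \leq p'(r) \leq C\,p(Mr)
\]
of Theorem~\ref{thm:main} forces $H_\mathrm{pc}(\Omega)>0$ if and only if $H_\mathrm{pc}(\Omega')>0$. Throughout I work with the common dimension $d$ of the statement, so that both patch-counting entropies are computed with the same exponent $r^d$ in the denominator.

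First I would take logarithms in the upper bound and divide by $r^d$, rewriting the result as
\[
  \frac{\log p'(r)}{r^d} \leq \frac{\log C}{r^d} + M^d\,\frac{\log p(Mr)}{(Mr)^d}.
\]
Passing to the limit superior as $r \ra +\infty$, and noting that $\log C/r^d \ra 0$ while $Mr$ ranges over all large scales, this yields $H_\mathrm{pc}(\Omega') \leq M^d\,H_\mathrm{pc}(\Omega)$. Treating the lower bound identically --- taking logarithms, dividing by $r^d$, and reparametrising the scale by $s=mr$ inside the $\limsup$ --- gives $m^d\,H_\mathrm{pc}(\Omega) \leq H_\mathrm{pc}(\Omega')$. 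Combining the two estimates,
\[
  m^d\,H_\mathrm{pc}(\Omega) \leq H_\mathrm{pc}(\Omega') \leq M^d\,H_\mathrm{pc}(\Omega),
\]
so that the two entropies vanish together and are positive together, since $m,M>0$. Invoking Baake--Lenz--Richard on each side to pass back to topological entropy then finishes the proof.

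I do not expect a genuine obstacle: the proposition is essentially a corollary of Theorem~\ref{thm:main}. The only points requiring a little care are the interchange of the rescalings $r\mapsto Mr$ and $r\mapsto mr$ with the limit superior --- one checks that the additive terms $\log C/r^d$ and $\log c/r^d$ tend to $0$ and that a positive rescaling of the argument leaves the value of the $\limsup$ unchanged --- together with the observation that the multiplicative factors $m^d$ and $M^d$, being strictly positive, rescale the entropy without affecting whether it is zero or positive, which is all the statement asserts.
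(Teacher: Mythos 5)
Your argument is correct and is exactly the route the paper intends: it presents this proposition as an immediate consequence of Theorem~\ref{thm:main} together with the Baake--Lenz--Richard identification of patch-counting and topological entropy, without writing out the $\limsup$ computation that you supply. Your derivation of $m^d\,H_\mathrm{pc}(\Omega) \leq H_\mathrm{pc}(\Omega') \leq M^d\,H_\mathrm{pc}(\Omega)$ is sound and also explains the paper's remark that the value of a nonzero entropy is \emph{a priori} not preserved, only its positivity.
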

 Note that when the entropy is not zero, it is \emph{a priori} not preserved.

 When the entropy is zero, however, the complexity function appears rather to be a \emph{metric} object---when the space is endowed with a commonly used distance (sometimes called the combinatorial metric).
 It had been noticed before~\cite{JS11,Jul-phd}, and it is discussed in more details in Section~\ref{ssec:statement}.
 The consequences of the results proved in this paper can be summed up by the following statement.
 \begin{prop*}[(Corollary~\ref{cor:dimbox})]
  A homeomorphism between two aperiodic, repetitive, FLC tiling spaces preserves the box-counting dimension of their transversals, when they are endowed with the combinatorial distance.
 \end{prop*}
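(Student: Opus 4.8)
The plan is to isolate the single elementary fact that does all the work --- that ``complexity-equivalence'' preserves logarithmic growth rates --- and then to apply it twice. By definition, the lower and upper box-counting dimensions of a transversal $\Xi$ are
\[
\underline{\dim}_B(\Xi) = \liminf_{\eps \to 0} \frac{\log N(\eps)}{\log(1/\eps)}, \qquad \overline{\dim}_B(\Xi) = \limsup_{\eps \to 0} \frac{\log N(\eps)}{\log(1/\eps)},
\]
where $N(\eps)$ is the least number of $\eps$-balls needed to cover $\Xi$. For the combinatorial distance, two tilings of the transversal lie within distance of order $1/r$ precisely when they agree on the ball of radius $r$ about the origin; so, taking $\eps \sim 1/r$ (whence $\log(1/\eps) \sim \log r$), the covering number $N(1/r)$ is comparable to the complexity $p(r)$ in the same two-sided sense as in Theorem~\ref{thm:main}, a comparison that is precisely what Section~\ref{ssec:statement} provides. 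Thus both the identity expressing $\dim_B(\Xi)$ through the growth of $p$ and the transfer from $\Omega$ to $\Omega'$ are instances of one statement: \emph{if $c\,g(mr) \le f(r) \le C\,g(Mr)$ for all large $r$, then $\liminf_r \log f(r)/\log r = \liminf_r \log g(r)/\log r$, and likewise with $\limsup$.}

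To prove this statement I would take logarithms and divide by $\log r$. From $c\,g(mr) \le f(r) \le C\,g(Mr)$ one obtains
\[
\frac{\log c}{\log r} + \frac{\log g(mr)}{\log r} \le \frac{\log f(r)}{\log r} \le \frac{\log C}{\log r} + \frac{\log g(Mr)}{\log r}.
\]
As $r \to \infty$ the terms $\log c/\log r$ and $\log C/\log r$ tend to $0$, while, writing
\[
\frac{\log g(mr)}{\log r} = \frac{\log g(mr)}{\log(mr)} \cdot \frac{\log(mr)}{\log r},
\]
the factor $\log(mr)/\log r \to 1$ (and symmetrically for the $M$-term). Hence the two outer quantities have the same $\liminf$ and $\limsup$ as $\log g(r)/\log r$, and therefore so does the middle quantity $\log f(r)/\log r$. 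Applying this first with $f(r) = N(1/r)$ and $g = p$ shows that $\dim_B(\Xi)$ equals the logarithmic growth rate of $p$, both for the $\liminf$ and the $\limsup$; applying it next with $f = p'$ and $g = p$, via Theorem~\ref{thm:main}, shows that this rate is unchanged under the homeomorphism. Combining the two, the lower and upper box-counting dimensions of $\Xi$ and $\Xi'$ coincide, which is the assertion.

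The real content lies entirely in the geometric comparison $N(1/r) \asymp p(r)$ invoked above; everything else is the formal computation just carried out. The point demanding care is that the combinatorial distance records agreement of patches as seen from the origin, and only among tilings possessing a puncture at the origin, whereas $p(r)$ counts all patches of radius $r$ up to translation. Bridging the two relies on finite local complexity, to bound the number of punctures lying inside a patch of radius $r$ and thereby control the multiplicative constants, together with repetitivity, to guarantee that every patch is actually realised at some puncture. This is exactly the estimate set up in Section~\ref{ssec:statement}; granting it, the corollary reduces to the growth-rate invariance above, in which the dilation and multiplicative constants $c, C, m, M$ furnished by Theorem~\ref{thm:main} manifestly play no role.
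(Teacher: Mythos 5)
Your proposal is correct and follows essentially the same route as the paper: the paper obtains Corollary~\ref{cor:dimbox} by combining the cited proposition of~\cite{JS11} (identifying $\dim_{\mathrm B}(\Xi)$ with the logarithmic growth rate of $p_\Xi$) with Theorem~\ref{thm:main}, which is exactly your two applications of the growth-rate lemma. You merely make explicit the elementary verification that the equivalence $c\,g(mr)\le f(r)\le C\,g(Mr)$ leaves $\lim \log f(r)/\log r$ unchanged, which the paper leaves implicit.
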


 The relationships between complexity on the one hand, and the dynamics on the other hand, had already been investigated to some extent.
 For example, for subshifts, it is known that a topological conjugacy is a so-called ``sliding block code'' (which is referred to as ``local derivation'' in our context, see Definition~\ref{def:loc-deriv}), and asymptotic properties of the complexity function are unchanged by an invertible recoding.
 For tiling spaces, there is no longer an equivalence between topological conjugacy and local derivation. Frank and Sadun~\cite{FS12} proved in a more general setting (including even infinite local complexity), that the asymptotic behaviour of the complexity function is preserved by a topological conjugacy of the tiling spaces.

 The reason why a mere homeomorphism---and not a conjugacy---has consequences beyond the topology has to do with the special structure of the space.
 Under the hypotheses made in this paper, a homeomorphism automatically sends orbits to orbits, in a much controlled way.
 \begin{prop*}[(Theorem~\ref{thm:hT} and Corollary~\ref{cor:lipschitz})]
  A homeomorphism $h$ between two aperiodic, repetitive tiling spaces with finite local complexity is an orbit equivalence. Furthermore, for all $T$, the cocycle $h_T$ defined by $h(T-x) = h(T)-h_T(x)$ is a homeomorphism from $\R^d$ onto itself, and satisfies the following inequality:
  \[
   \nr{h_T(x)} \leq M \nr{x} + C
  \]
  for some constants $M,C$ which do not depend on~$T$.
 \end{prop*}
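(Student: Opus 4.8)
The plan is to proceed in four stages: first show that $h$ carries orbits to orbits (so that the cocycle $h_T$ is even well defined), then promote each $h_T$ to a homeomorphism of $\R^d$, then establish a uniform local displacement bound, and finally bootstrap it to the linear estimate by means of the cocycle identity.

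First I would exploit the \emph{matchbox} structure of the two spaces. Finite local complexity guarantees that every point of $\Omega$ has a neighbourhood homeomorphic to $C\times B$, where $B$ is an open ball in $\R^d$, $C$ is a clopen piece of the (totally disconnected) canonical transversal, and the $\R^d$-action moves points along the $B$-factor. Consequently any continuous path in $\Omega$ projects locally to a continuous, hence constant, path in the totally disconnected factor, so the path-components of $\Omega$ are exactly the $\R^d$-orbits (repetitivity only serving here to make the transversal a genuine Cantor set). Since a homeomorphism preserves path-components, $h$ maps each orbit of $\Omega$ onto an orbit of $\Omega'$, which is the asserted orbit equivalence. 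Aperiodicity makes the action free, so $x\mapsto T-x$ and $y\mapsto h(T)-y$ are bijections of $\R^d$ onto the respective orbits; composing them, $h(T-x)$ lies on the orbit of $h(T)$ and equals $h(T)-h_T(x)$ for a unique $h_T(x)\in\R^d$. This defines the cocycle and yields at once the identity $h_T(x+y)=h_T(x)+h_{T-x}(y)$.

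Next I would check that each $h_T$ is a homeomorphism of $\R^d$. Reading off displacements in the local product charts shows that $(T,x)\mapsto h_T(x)$ is continuous on $\Omega\times\R^d$ (this is the same mechanism made uniform in the next step), so in particular $h_T$ is continuous. Applying the whole construction to $h^{-1}$ produces a cocycle with $(h^{-1})_{h(T)}=(h_T)^{-1}$, so $h_T$ has a continuous inverse and is therefore a homeomorphism of $\R^d$ onto itself. The substantive step is a \emph{uniform local bound}, and this is where finite local complexity is essential: it provides a radius $\eps_0>0$ such that whenever two points $S,S'$ of $\Omega'$ lie on a common orbit with $d(S,S')<\eps_0$, the displacement $v$ with $S'=S-v$ is unique, small, and depends continuously on $(S,S')$. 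Since $\Omega$ is compact, $h$ is uniformly continuous, and the $\R^d$-action is uniformly continuous near $0$; hence there are constants $\rho,R>0$, independent of $T$, such that $\nr{x}\le\rho$ forces $d(h(T-x),h(T))<\eps_0$ and therefore $\nr{h_T(x)}\le R$.

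Finally I would bootstrap. Given an arbitrary $x$, write $x=x_1+\dots+x_n$ with each $\nr{x_i}\le\rho$ and $n\le \nr{x}/\rho+1$. Iterating the cocycle identity gives $h_T(x)=\sum_{k=1}^{n} h_{T-(x_1+\dots+x_{k-1})}(x_k)$, and each summand has norm at most $R$ by the uniform local bound, the constant $R$ being independent of the tiling appearing as a subscript. The triangle inequality then yields $\nr{h_T(x)}\le nR\le (R/\rho)\nr{x}+R$, which is the claimed estimate with $M=R/\rho$ and $C=R$. The main obstacle is the uniform local bound of the third step: once one knows that same-orbit points which are close in $\Omega'$ differ by a uniformly controlled, continuously varying translation, everything else is formal, and it is precisely there that finite local complexity and the compactness of the spaces must be brought to bear.
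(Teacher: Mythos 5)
The decisive step of your argument---the ``uniform local bound''---rests on a claim that is false in these spaces. You assert that finite local complexity yields an $\eps_0>0$ such that two points $S,S'=S-v$ of $\Omega'$ on a common orbit with $D(S,S')<\eps_0$ must have \emph{small} displacement $v$, depending continuously on $(S,S')$. By minimality (repetitivity), the orbit of any $S$ returns $\eps_0$-close to $S$ after arbitrarily \emph{large} translations: for every $\eps_0>0$ there are $v$ of arbitrarily large norm with $D(S,S-v)<\eps_0$, since closeness in the tiling metric only means agreement on a large ball up to a small translation, and says nothing about the length of the orbit displacement. (What FLC and aperiodicity do give is that a \emph{nonzero} displacement smaller than the inner radius of a tile forces a definite separation in $\Omega'$---a dichotomy at small scales---but this does not exclude large $v$.) The same recurrence phenomenon is why the paper stresses that $x\mapsto T-x$ is a continuous bijection onto the orbit but \emph{not} a homeomorphism onto its image, and it also undermines your earlier remark that joint continuity of $(T,x)\mapsto h_T(x)$ can be ``read off in the local product charts'': knowing that $h(T-x)$ and $h(T_0-x_0)$ are close in $\Omega'$ does not directly bound $\nr{h_T(x)-h_{T_0}(x_0)}$, for exactly the reason above.

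The statement you want is nevertheless true, and the correct route is the one the paper takes: first prove joint continuity of $(T,x)\mapsto h_T(x)$ by a connectedness argument---along a continuous path the displacement varies continuously, so if $\nr{h_T(x)-h_{T_0}(x_0)}$ were $\geq\eps$ the intermediate value theorem would produce a parameter at which two tilings agree near the origin up to a tiny translation yet are offset by \emph{exactly} $\eps$, which is impossible once $\eps$ is smaller than the tile size. Then compactness of $\Omega\times\overline{B(0,\rho)}$ gives the uniform bound $\nr{h_T(y)}\leq R$ for $\nr{y}\leq\rho$. Granting that bound, your final bootstrap via the cocycle identity is correct and is in fact a more elementary derivation of $\nr{h_T(x)}\leq M\nr{x}+C$ than the paper's, which routes through a Delaunay triangulation and a piecewise-affine interpolation (Lemma~\ref{lem:quasi-lipschitz}, which the paper needs anyway for the complexity theorems). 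So the gap is localized, but it sits precisely at the crux identified by the paper.
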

 This result, together with an approximation lemma (Lemma~\ref{lem:redressement}), is the key ingredient to extend Frank and Sadun's ideas and prove the main results above.

 The methods for proving Theorem~\ref{thm:main} (namely Lemma~\ref{lem:quasi-lipschitz}) can also be related with results presented in~\cite[Chapitre~6]{Jul-phd}.
 It was shown that the image of a tiling by a small deformation has the same complexity function as the original tiling (up to equivalence).
 A small deformation induces a special kind of homeomorphism between the original tiling space and the space of the deformed tiling.
 This result can therefore be seen as a particular situation of the case treated in the present paper.
 
 The result of~\cite{Jul-phd} was in particular applied to a family of deformations introduced by Sadun and Williams.
 In~\cite{SW03}, it is proved that a tiling space is homeomorphic to the suspension of a subshift; in~\cite{Jul-phd}, it is proved that this homeomorphism preserves the complexity.

 It turns out not to be a coincidence that methods which applied for the study of complexity of deformed tilings can be used here.
 In a last section, some of the links between homeomorphisms and deformations are sketched. Given $h: \Omega \ra \Omega'$, it can be perturbed to define a deformation.
 It also defines an element in the mixed cohomology group $\Ho^1_\mathrm{m}(\Omega, \R^d)$ defined by Kellendonk~\cite{Kel08}.
 This group is a good candidate for describing homeomorphisms between tiling spaces, up to topological conjugacy.
 A more thorough investigation of the questions raised in the last section of this paper will be the subject of a future article.

\subsection*{Acknowledgements}
I wish to thank Ian Putnam, Johannes Kellendonk, Lorenzo Sadun and Christian Skau for useful discussions. I also wish to thank Michael Baake who gave me the opportunity to present an early version of this work at the University of Bielefeld.
Part of the early work on this topic was realized during a previous post-doc at the University of Victoria, under the supervision of Ian Putnam, partially funded by the Pacific Institute for the Mathematical Sciences.

\section{Preliminaries}\label{sec:prelim}

\subsection{Tilings and tiling spaces}\label{ssec:tilings}

In our discussion, a \emph{tile} is a closed compact set of the Euclidean space $\R^d$, which is homeomorphic to a closed ball, and is the closure of its interior.
A \emph{prototile} is the equivalence class under translation of a tile.
Note that a tile may be decorated (one can think of blue and red squares, for example). Formally, it would be a pair $(t,l)$, where $t$ is the tile itself (a subset of $\R^d$) and $l$ is a label. This notation will remain implicit, and a tile will just be denoted by~$t$.

Let $\A$ be a finite collection of prototiles, which is fixed from this point on.
A tiling with prototiles in $\A$ is a collection of tiles $T = \{t_i\}_{i \in I}$, such that for all~$i$, the translation class of $t_i$ belongs to $\A$. Furthermore, the union of the tiles is all of $\R^d$, and the tiles may only intersect on their boundaries: $t_i \cap t_j$ has no interior, unless $i=j$.

Given a tile $t \subset \R^d$, and $x \in \R^d$, the tile $t-x$ is defined as the translate of $t$ by $x$.
In a straightforward way, $T-x$ is defined as $\{t_i - x\}_{i \in I}$.
It is a tiling, and is in general different from $T$ (in our discussion, tilings are \emph{not} being identified when they are translates of each other).

A tiling $T$ is \emph{periodic} if there exists $x \in \R^d \setminus \{0\}$, such that $T-x = T$. It is called \emph{non-periodic} or aperiodic otherwise.

Given a non-periodic tiling, it makes sense to study the set of all tilings which cannot be distinguished from it at the local scale. This is the motivation for constructing the tiling space, in the same way that a subshift can be associated with a word.

\begin{defin}
 Given $T$ a tiling of $\R^d$, made from a set of prototiles $\A$, the \emph{hull} of $T$ is the following closure:
 \[
  \Omega_T = \overline{ \{T-x \tq x \in \R^d \} }.
 \]
 The closure is taken in the set $\Omega_\text{all}$ of all tilings made from tiles in $\A$, for the topology of the distance defined by:
 \begin{multline*}
  D(T_1, T_2) \leq \eps \text{ if } \exists x_1,x_2 \in \R^d, \text{ with } \nr{x_i} \leq \eps, \\
  \text{such that }  (T_1 - x_1) \text{ agrees with } (T_2 - x_2) \text{ on } B(0, 1/\eps ),
 \end{multline*}
 and $D$ is bounded above by $1/\sqrt{2}$.
\end{defin}

Note that all elements in $\Omega_T$ are tilings themselves, and therefore there is an action of $\R^d$ by translation on $\Omega_T$.
This action is continuous for the topology defined above.

A tiling $T$ is said to have \emph{finite local complexity} (or FLC) if for all $R > 0$ there is a finite number (up to translation) of patches of the form $T \cap B(x,R)$, for $x \in \R^d$. Here, $T \cap B(x,R)$ stands for the patch of all tiles which intersect the ball $B(x,R)$.

A tiling $T$ is said to be \emph{repetitive} if for all finite patch $P \subset T$, there exists $R > 0$ such that a translate of $P$ appears in $T \cap B(x,R)$, for any $x \in \R^d$.

The following proposition is well known.
\begin{prop}
 Let $T$ be a non-periodic and repetitive tiling, with finite local complexity. Then the tiling space $\Omega_T$ is a compact space, and the action by translation is minimal.
 When there is no risk of ambiguity, the tiling space is just written $\Omega$.
\end{prop}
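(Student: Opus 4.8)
The plan is to establish the two assertions separately, using finite local complexity for compactness and repetitivity for minimality. Throughout I would use the elementary observation that every $T'\in\Omega_T$ inherits the local structure of $T$: since $T'$ is a limit of translates $T-x_n$, any finite patch of $T'$ agrees with a patch of some $T-x_n$ on a large ball, and is therefore a translate of a patch of $T$. (Non-periodicity is not needed for either statement.)

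For compactness, since $(\Omega_T,D)$ is a metric space it suffices to show it is complete and totally bounded. Total boundedness follows from finite local complexity: given $\eps>0$, the relation $D(T_1,T_2)\le\eps$ is controlled by the patches $T_i\cap B(0,1/\eps)$ up to a translation of norm at most $\eps$. By FLC there are only finitely many such patches up to translation, and for each fixed type the set of positions it may occupy relative to the origin (while still covering $B(0,1/\eps)$) is bounded, hence coverable by finitely many balls of radius $\eps$; combining these two finite data gives a finite cover of $\Omega_T$ by $\eps$-balls. For completeness, I would check that a Cauchy sequence agrees, after translations of summable norm, on balls of radius tending to infinity, so that assigning to each point of $\R^d$ the tile eventually present there produces a limit; FLC guarantees that this limit is again a genuine tiling with prototiles in $\A$ (no tile shrinks or accumulates). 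As $\Omega_T$ is closed in $\Omega_\text{all}$ by definition, the limit lies in $\Omega_T$.

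For minimality, it is enough to prove that for every $T'\in\Omega_T$ the orbit closure $\overline{\{T'-x \tq x\in\R^d\}}$ contains $T$: being closed and translation-invariant, it is then all of $\Omega_T=\overline{\{T-x \tq x\in\R^d\}}$. Fix $\eps>0$, set $R=1/\eps$ and $P=T\cap B(0,R)$. By repetitivity of $T$ there is $R'>0$ such that every patch $T\cap B(z,R')$ contains a translate of $P$. By the inheritance observation, the patch $T'\cap B(0,R')$ is a translate of some $T\cap B(z,R')$, hence contains a translate $P+v$ of $P$. Then $T'-v$ agrees with $T$ on $B(0,R)$, so $D(T'-v,T)\le\eps$; letting $\eps\to 0$ gives $T\in\overline{\{T'-x \tq x\in\R^d\}}$, as required.

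I expect the main obstacle to be the compactness step, specifically the bookkeeping around the continuous translation built into $D$: FLC controls only the finitely many combinatorial patch-types, so one must handle the positional degrees of freedom carefully both when covering $\Omega_T$ by finitely many $\eps$-balls and when extracting the limit of a Cauchy sequence, verifying that FLC prevents degeneration of tiles in the limit. The minimality step, by contrast, should follow directly from repetitivity once the inheritance of local structure is established.
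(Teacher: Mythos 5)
Your argument is correct and is the standard one: total boundedness plus completeness from finite local complexity (with the patches of any $T'\in\Omega_T$ inherited from those of $T$) for compactness, and repetitivity forcing every orbit closure to contain $T$, hence all of $\Omega_T$, for minimality. The paper itself states this proposition as well known and gives no proof, so there is nothing to compare beyond noting that your proof is the usual textbook argument and that your bookkeeping concerns about the small translations built into $D$ are real but routine.
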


Note that all these definitions--- which hold for tilings---will also hold for other patterns, as long as it is possible to define what a patch is, and what ``identical up to translation'' means.
Delaunay sets are an important example.
A Delaunay set of $\R^d$ is a subset $X \subset \R^d$, which is \emph{uniformly discrete} and \emph{relatively dense}.
These two conditions mean respectively that there is a constant~$m$ such that any two points in $X$ are separated by a distance at least~$m$; and a constant~$M$ such that any point in $\R^d$ is within distance at most $M$ of a point in~$X$.
The definition of \emph{patch} (or local configuration) of $X$ now becomes transparent when using the notation $X \cap B(x,r)$.
From there, the definitions of FLC, repetitivity and aperiodicity, as well as the definition of the hull (and its properties) are unchanged.

\subsection{Local derivations, and transversals}

 Given a tiling $T$, the associated tiling space $\Omega_T$ is sometimes called the ``continuous'' hull.
 It is also possible to describe a ``discrete'' tiling space, which is the analogue of a subshift in the symbolic setting (see Section~\ref{sec:symbolic}).
 Amongst other properties, it is a subset of $\Omega$, it intersects all $\R^d$-orbit and its intersection with any orbit is countable.
 It does not carry a $\Z^d$-action in general.
 This discrete tiling space is sometimes called a \emph{canonical transversal}.
 The canonicity of this transversal, however,  leaves a lot of room for choice.
 
 First, define what a local derivation is.
 \begin{defin}[see~\cite{BJS91}]\label{def:loc-deriv}
  Let $\Omega$ and $\Omega'$ be tiling spaces of repetitive and FLC tilings. A \emph{local derivation} is a factor map (\ie it is onto and commutes with translations) $\phi: \Omega \ra \Omega'$ which satisfies the following local condition:
  \[
   \exists r,R > 0, \ \forall T \in \Omega, \ \phi(T) \cap B(0,r) \text{ only depends on } T \cap B(0,R).
  \]
 \end{defin}
In particular, for all $x$, $\phi(T) \cap B(x,r)$ only depends on  $T \cap B(x,R)$.

 These conditions really mean that $\phi$ is a map defined on patches of size $R$, and $\phi(T)$ is obtained by gluing together the images under $\phi$ of the patches out of which $T$ is made. In this case, $\phi(T)$ is said to be locally derived from $T$.
 When $\phi$ is invertible and $\phi^{-1}$ is also a local derivation, $T$ and $\phi(T)$ are called \emph{mutually locally derivable}, or MLD.
 Such maps are the analogues of \emph{sliding block codes} in symbolic dynamics.

 \begin{defin}\label{def:xi}
  Let $\Omega$ be a tiling space, and $\ron D$ be a local derivation rule defined on $\Omega$, such that for all $T \in \Omega$, $\ron D (T)$ is a Delaunay set. Such a rule is called a \emph{local pointing rule}.
  The \emph{canonical transversal} associated with $\ron D$ is the set
  \[
   \Xi_\ron{D} = \{ T \in \Omega \tq 0 \in \ron D (T) \}.
  \]
 \end{defin}
 
 The Delaunay set $\ron D (T)$ is repetitive and has finite local complexity, since $T$ has these properties itself. It does not need to be MLD to $T$, or even aperiodic (see Figure~\ref{fig:point-domino}). %However, the \emph{pointed tiling} $(T, \ron D(T))$ is aperiodic, repetitive, FLC, and in the same MLD class as~$T$.

 \begin{figure}[htp]
  \begin{center}
   \includegraphics[scale=0.75]{./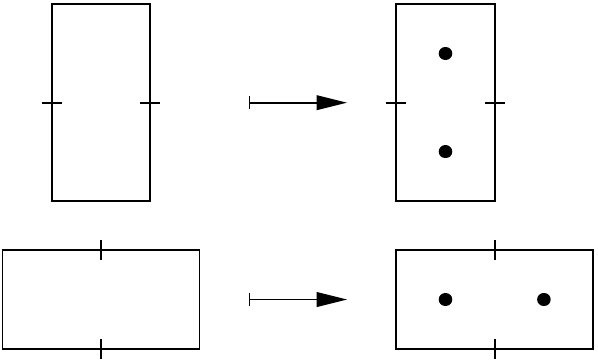}
  \end{center}
  \caption{A local pointing rule, as defined on a set of rectangular tiles of size $2\times 1$. Given any domino tiling made from these tiles (with dominoes meeting full-edge to full-edge), its associated point pattern will be periodic, even when the tiling is not.}
  \label{fig:point-domino}
 \end{figure}

 In the literature, ``the canonical transversal'' is usually built by choosing a pointing rule $\ron D$ which selects exactly one point in the interior of each prototile (for example their barycentre if the tiles are convex), and by pointing the tilings consistently.
 
 Canonical transversals in the sense above are abstract transversals in the sense of Muhly Renault and Williams~\cite{MRW87}. The reason why they are called ``canonical'' in spite of their apparent lack of canonicity is because $T$ and $T'$ are close in a canonical transversal if they agree \emph{exactly} on a large ball centred at the origin (not up to a small translation).
 
 \begin{rem}
  If $\Xi$ is a canonical transversal in a tiling space $\Omega$, the topology on $\Xi$ is induced by the family of clopen sets:
  \[
    U_P = \{T \in \Xi \tq P \subset T \},
  \]
  where $P$ is in the family of admissible patches. Equivalently, it is induced by the metric:
  \[
    D'(T,T') = \inf\Big( \big\{\eps > 0 \tq T \cap B(0,\eps^{-1}) = T' \cap B(0,\eps^{-1}) \big\} \cup \big\{ 1/\sqrt{2} \big\} \Big).
  \]
 \end{rem}

 Another description of the ``canonical'' transversals is that they are vertical with respect to a (truly) canonical solenoid structure on the tiling space.

 \begin{thm}[(see for ex.\ \cite{BBG06,BG03})]\label{thm:box}
  The tiling space associated with a repetitive, aperiodic and FLC tiling is an abstract (so-called \emph{flat}) solenoid in the following sense:
  for each $T \in \Omega$, there is a neighbourhood $V$ of $T$, and a chart map $\phi$, which maps $V$ homeomorphically to the direct product of a ball of $\R^d$ by a Cantor set.
  Furthermore, $\phi$ commutes with the action of $\R^d$ by translation whenever this action is defined.
  Also, the transition maps satisfy the property:
  \begin{equation}\label{eq:solenoid}
   \psi \circ \phi^{-1} (x, \xi) = ( x - t_{\psi \circ \phi^{-1}}, \xi'),
  \end{equation}
  where $\xi'$ depends continuously on $\xi$, but the translation vector $t_{\psi \circ \phi^{-1}}$ only depends on the chart maps.
 \end{thm}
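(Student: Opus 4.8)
The plan is to build the solenoid charts directly from a canonical transversal together with the local flow, and then to read off the transition maps from the way nearby transversal points sit inside a common orbit. First I would fix a local pointing rule $\ron D$ as in Definition~\ref{def:xi} and its canonical transversal $\Xi$. Using the metric $D'$ of the preceding remark, each $U_P$ is clopen, so $\Xi$ is totally disconnected; it is compact as a closed subset of $\Omega$; and it has no isolated points, because repetitivity forces every admissible patch to reoccur while aperiodicity guarantees that distinct occurrences yield distinct points of $\Xi$. Hence $\Xi$ is a Cantor set, which will serve as the transverse factor of every chart.

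Next I would construct a chart around a given $T \in \Omega$. After translating (which is a homeomorphism of $\Omega$ commuting with the action, so the resulting chart may be transported back to $T$), I may assume $0 \in \ron D(T)$, so that $T =: \xi_0 \in \Xi$. Let $2\rho$ be a uniform lower bound for the separation of points in the Delaunay sets $\ron D(S)$, which exists by finite local complexity; fix $r < \rho$ and a clopen neighbourhood $U$ of $\xi_0$ in $\Xi$. Define
\[
  \Phi : B(0,r) \times U \lra \Omega, \qquad \Phi(x,\xi) = \xi - x .
\]
I would check that $\Phi$ is a homeomorphism onto a neighbourhood $V$ of $T$: continuity is immediate from the definition of $D$; injectivity follows from the separation bound, since $\xi - x = \xi' - x'$ with $\nr{x},\nr{x'} < r$ gives $x - x' \in \ron D(\xi)$ (as $\ron D$ commutes with translations and $0 \in \ron D(\xi')$) with $\nr{x-x'} < 2r < 2\rho$, forcing $x = x'$ and then $\xi = \xi'$; openness of the image and continuity of the inverse follow from the usual compactness argument applied to a closed sub-box. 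Set $\phi = \Phi^{-1}$. Equivariance is built in: whenever $(x,\xi)$ and $(x+y,\xi)$ both lie in the chart, $\Phi(x+y,\xi) = \Phi(x,\xi) - y$, so $\phi$ intertwines translation with a shift in the $\R^d$ coordinate alone.

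Finally, and this is the heart of the statement, I would compute the transition maps. On an overlap, a point can be written in two ways as $\xi_1 - x_1 = \xi_2 - x_2$ with $\xi_i$ in clopen pieces of $\Xi$ and $\nr{x_i} < r$. Then $\xi_2 = \xi_1 - (x_1 - x_2)$, so the vector $t = x_1 - x_2$ is a displacement carrying one transversal point to another in the same orbit. The key point is that $t$ is \emph{locally constant} in $\xi_1$: since $\ron D$ is a local derivation and the tilings have finite local complexity, the positions of the points of $\ron D(\xi_1)$ lying in a fixed small ball depend only on the patch of $\xi_1$ around the origin, hence are constant on a sufficiently fine clopen set. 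Refining the pieces so that $t$ is constant on each one, the transition map takes the announced form
\[
  \psi \circ \phi^{-1}(x,\xi) = (x - t,\ \xi'),
\]
with $\xi'$ depending continuously on $\xi$ (being a composite of homeomorphisms) while $t$ plays the role of $t_{\psi \circ \phi^{-1}}$ and depends only on the chart data. This is exactly the flatness condition~\eqref{eq:solenoid}.

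The step I expect to be the main obstacle is precisely this last local constancy of the return vector $t$: one must argue carefully that, because the pointing rule is a local derivation and FLC holds, the finitely many candidate displacements between origin-centred transversal points vary in a locally constant—rather than merely continuous—fashion along the Cantor direction. Obtaining injectivity of $\Phi$ cleanly also rests on aperiodicity, to rule out small nontrivial translational coincidences; I would therefore extract the uniform separation constant $\rho$ from finite local complexity at the very outset, so that both the injectivity estimate and the local-constancy argument can quote it directly.
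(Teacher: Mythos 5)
Your chart construction is essentially the paper's own: the paper likewise fixes a pointed transversal $\Xi_{\ron D}$ (built from the patch $T\cap B(0,1)$), notes it is a Cantor set, and shows $(x,\xi)\mapsto \xi-x$ on $\overline{B(0,\eps)}\times\Xi_{\ron D}$ is a homeomorphism onto a neighbourhood by injectivity from uniform discreteness plus compactness, with equivariance built in. The one substantive difference is that the paper explicitly declines to prove the transition-map property~\eqref{eq:solenoid}, whereas you supply the missing argument: the displacement $t=x_1-x_2$ between the two transversal points is a return vector of bounded norm, hence takes values in a finite (discrete, by FLC) set and depends continuously on $\xi$, so it is locally constant and the atlas can be refined until it is constant on each overlap --- this is sound, and is in effect the same mechanism the paper later isolates as Lemma~\ref{lem:vect-retour}. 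A minor remark: aperiodicity is not needed for injectivity of the chart (uniform discreteness of $\ron D(\xi)$ already forces $x=x'$); it is used, together with repetitivity, only to ensure $\Xi$ has no isolated points.
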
 

 \begin{proof}
  This is a partial proof in order to outline how the canonical solenoid structure is chosen. Property~\eqref{eq:solenoid} will not be proved here.

  Let $T \in \Omega$. Let $P = T \cap B(0,1)$ be a small patch of $T$ around the origin, and let $\eps > 0$ be small with respect with the inner radius of the tiles. Define the derivation rule
  \[
   \ron D (T') = \{x \in \R^d \tq P \subset T'-x\}.
  \]
  It is clearly a local rule, and the associated transversal $\Xi_{\ron D}$ is the set of all tilings which coincide with $T$ up to radius at least~$1$.
  If $T$ is repetitive, aperiodic, and has finite local complexity, these are well-known facts that $\Xi_{\ron D}$ is a Cantor set, and that $\ron D (T')$ is a Delaunay set for all $T'$.
  
  Now, consider the map
  \begin{displaymath}
   \begin{array}{lrcl}
    \phi: & \overline{B(0,\eps)} \times \Xi_{\ron D} & \lra & \Omega \\
          & (x,T') & \longmapsto & T'-x.
   \end{array}
  \end{displaymath}
  It is clearly continuous, by continuity of the action. It is one-to-one, because $\eps$ was chosen small. It is therefore a homeomorphism onto its image, by compactness of the source space. Finally, it is an exercise to check that the image of $\phi$ restricted to $B(0,\eps) \times \Xi_{\ron D}$ is open in $\Omega$. It provides the desired chart.
  It is important to remark that, by construction, the chart maps commute with the action of $\R^d$ whenever it is defined.
 \end{proof}

 It can be proved that the canonical transversals in the sense of Definition~\ref{def:xi} are exactly the vertical transversals for this solenoid structure, that is: subsets $\Xi \subset \Omega$ which can be covered by chart boxes in such a way that for each such box $V \simeq B(0,\eps_V) \times X_V$,
 \[
  \Xi \cap V \simeq \{0\} \times X.
 \]

\subsection{Complexity}

 The complexity function of a tiling $T$ is a function $p$, such that $p(r)$ counts the number of patches of size $r$ in $T$.
 This requires to define what ``patch of size $r$'' means in this context.
 While, in the $1$-dimensional symbolic setting, there is no question about what a word of length~$n$ is, the situation is not so obvious in the continuous case.
 However, it appears that the asymptotic properties of the complexity function are unchanged by picking any reasonable definition of ``patch of size $r$''.
 
 First, remark that a tiling $T$ needs to have FLC in order to have a well defined complexity function (at least, in terms of naively counting patches---see~\cite{FS12} for ways of defining complexity for tilings without FLC).
 Second, if a tiling $T$ is repetitive, then all tilings in its hull have the same complexity function. Therefore, for minimal tiling spaces, a complexity function is associated with the space rather than with one specific tiling.
 This justifies the following definition.
 
 \begin{defin}
  Let $\Omega$ be a space of tilings which are repetitive, FLC and aperiodic.
  Let $\ron D$ be a local pointing rule, and $\Xi$ the associated transversal.
  Then, the complexity function associated with these data, noted $p_\Xi$ or $p_{\ron D}$ is:
  \begin{align*}
    p_\Xi (r) & = \card \big\{ T \cap B(0,r) \tq T \in \Xi \big\} \\
      & = \card \big\{ (T_0 - x) \cap B(0,r) \tq x \in \ron D (T_0) \big\} \text{ for any } T_0 \in \Omega.
  \end{align*}
 \end{defin} 

 According to this definition, the complexity function counts the number of \emph{pointed patches} of size $n$, where the pointing has to be chosen in advance and is not canonical to the tiling. However, it turns out that changing the transversal doesn't change the complexity function.
 
 \begin{prop}\label{prop:complex-indep-xi}
  Let $\Omega$ be a tiling space, and $\Xi_1$, $\Xi_2$ be two canonical transversals associated with pointing rules $\ron D_1$ and $\ron D_2$ respectively.
  Then the complexity functions $p_{\Xi_1}$ and $p_{\Xi_2}$ are equivalent in the following sense:
  \[
   \lambda_1 p_{\Xi_2} (r - c) \leq p_{\Xi_1} (r)  \leq  \lambda_2 p_{\Xi_2} (r + C)
  \]
  for some constants $c,C$, $\lambda_1, \lambda_2 > 0$ and all $r$ big enough.
 \end{prop}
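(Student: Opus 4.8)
The plan is to produce, for $r$ large, an injective map with uniformly bounded fibres from the pointed patches counted by $p_{\Xi_1}(r)$ into those counted by $p_{\Xi_2}(r+C)$, and then to run the same argument with the roles of $\Xi_1$ and $\Xi_2$ exchanged. First I would record the uniform data attached to the two pointing rules. Each $\ron D_i$ is a local derivation, so there is a radius $R_i$ such that $\ron D_i(T)\cap B(0,s)$ is determined by $T\cap B(0,s+R_i)$ for every $T$ and every $s$; moreover $\ron D_i(T)$ is a Delaunay set with a separation constant $m_i$ and a covering radius $M_i$ that do not depend on $T$ (by FLC and repetitivity). From $m_i$ and $M_i$ a packing estimate gives a constant $N_i$, depending only on $m_i$, $M_i$ and $d$, such that $\card\big(\ron D_i(T)\cap B(0,M_i)\big)\le N_i$ for all $T$.

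Next I would build the map. Fix $r\ge M_2+R_2$ and let $Q=T\cap B(0,r)$ be a patch with $0\in\ron D_1(T)$, representing a class counted by $p_{\Xi_1}(r)$. By locality, $Q$ determines the finite set $\ron D_2(T)\cap B(0,M_2)$, which is non-empty because $M_2$ is a covering radius for $\ron D_2(T)$. Let $y=y(Q)$ be the lexicographically smallest element of this set; it is a well-defined function of $Q$ alone, and $\nr{y}\le M_2$. Since $\ron D_2$ commutes with translation, $y\in\ron D_2(T)$ gives $0\in\ron D_2(T-y)$, so $S:=T-y$ lies in $\Xi_2$ and $\tilde Q:=S\cap B(0,r+M_2)$ is a patch counted by $p_{\Xi_2}(r+M_2)$. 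I send $Q\mapsto(\tilde Q,y)$.

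The key point is that this map is injective with controlled image. Because $\nr{y}\le M_2$ one has $B(-y,r)\subseteq B(0,r+M_2)$, hence $S\cap B(-y,r)$ is read off from $\tilde Q$, and $T\cap B(0,r)=\big(S\cap B(-y,r)\big)+y$ recovers $Q$ from the pair $(\tilde Q,y)$; so the map is injective. Its second coordinate takes at most $N_2$ distinct values, while its first coordinate ranges among the patches counted by $p_{\Xi_2}(r+M_2)$. Counting therefore yields
\[
 p_{\Xi_1}(r)\ \le\ N_2\,p_{\Xi_2}(r+M_2),
\]
which is the right-hand inequality with $\lambda_2=N_2$ and $C=M_2$.

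Finally, exchanging the two transversals in the construction gives $p_{\Xi_2}(r)\le N_1\,p_{\Xi_1}(r+M_1)$ for $r\ge M_1+R_1$; substituting $r\mapsto r-M_1$ produces the left-hand inequality $\tfrac{1}{N_1}\,p_{\Xi_2}(r-M_1)\le p_{\Xi_1}(r)$, i.e.\ $\lambda_1=1/N_1$ and $c=M_1$. I expect the only delicate points to be bookkeeping rather than conceptual: one must keep the radius offsets and the threshold ``$r$ big enough'' consistent so that $Q$ genuinely determines $\ron D_2(T)\cap B(0,M_2)$ and so that $\tilde Q$ genuinely determines the restricted sub-patch, and one must fix a deterministic selection of $y$ so that $Q\mapsto(\tilde Q,y)$ is single-valued. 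The uniform fibre bound $N_i$ is exactly where finite local complexity, through the uniform discreteness of the pointings, is used.
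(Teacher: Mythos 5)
Your argument is sound in substance and rests on the same mechanism as the paper's: a patch pointed at a $\ron D_1$-point sits inside, and is recoverable from, a slightly larger patch pointed at a nearby $\ron D_2$-point, with bounded multiplicity. The organization differs: the paper first passes to the union rule $\ron D = \ron D_1 \cup \ron D_2$, so that one inequality ($p_{\Xi_1}(r) \le p_{\Xi}(r)$) is immediate from the inclusion of point sets and only the reverse direction needs the subpatch-plus-multiplicity count; you compare $\Xi_1$ and $\Xi_2$ directly and symmetrically via the explicit injection $Q \mapsto (\tilde Q, y)$. Both work; the union trick just halves the bookkeeping, while your version makes the injectivity completely explicit.

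One constant is mis-justified, and this is exactly the ``delicate bookkeeping'' you anticipated. You define $N_2$ as a per-tiling packing bound, $\card\big(\ron D_2(T)\cap B(0,M_2)\big)\le N_2$, and then assert that the second coordinate $y$ takes at most $N_2$ distinct values. But $y$ ranges over the union $\bigcup_{T\in\Xi_1}\ron D_2(T)\cap \bar B(0,M_2)$, and points coming from distinct tilings $T,T'$ need not be $m_2$-separated from one another, so this union can have more than $N_2$ elements (each $T$ could contribute a single nearby $\ron D_2$-point whose position varies with $T$). The conclusion survives because the relevant count is still finite and independent of $r$: either note that $y(Q)$ is determined by $T\cap B(0,M_2+R_2)$, of which there are at most $p_{\Xi_1}(M_2+R_2)$ choices; or, better, bound the fibre of the first-coordinate projection over a fixed $\tilde Q=S\cap B(0,r+M_2)$ by observing that any admissible $y$ satisfies $-y\in\ron D_1(S)\cap\bar B(0,M_2)$, so uniform discreteness of $\ron D_1(S)$ gives a genuine packing bound --- this fibre count is precisely the multiplicity constant $\lambda$ in the paper's proof. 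Either repair yields the stated inequality with a (possibly larger) constant $\lambda_2$.
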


 \begin{proof}
  Let $T \in \Omega$. First, let ${\ron D}$ be the pointing rule defined by
  \[
   {\ron D} (T) = \ron D_1 (T) \cup \ron D_2 (T).
  \]
  It is a pointing rule itself: ${\ron D} (T)$ is locally derived from $T$, it is relatively dense, and the uniform discreteness comes from finite local complexity of $T$ (and hence, finite local complexity of ${\ron D} (T)$).
  
  Let $\Xi$ be the transversal defined by $\ron D$. It is sufficient for proving the theorem to show that $p_\Xi$ is equivalent (in the sense above) to $p_{\Xi_i}$, for $i = 1,2$. Let us prove it for $i=1$.
  
  For any given $r$, let $X_r$ be the set of all patches of the form $(T-x) \cap B(0,r)$, with $x \in \ron D (T)$.
  Let $Y_r$ be the similarly defined set with $x \in \ron D_1 (T)$.
  It is obvious that $p_{\Xi_1}(r) \leq p_{\Xi}(r)$ for all~$r$ (since $Y_r \subset X_r$).
  
  Conversely, let $R$ be the constant such that for all $x \in \R^d$, $B(x,R)$ intersects $\ron D_1 (T)$.
  Let $\lambda > 0$ be the maximum number of points in $\ron D (T) \cap B(x,R)$, for $x \in \R^d$. It is well defined and finite (because of volume considerations using uniform discreteness of $\ron D (T)$, for example).
  For all patch $P \in X_r$, there is a patch $P' \in Y_{r+R}$ such that $P$ is a subpatch of $P'$; this patch $P'$ can contain at most $\lambda$ distinct translates of patches in $X_r$.
  This defines a map $X_r \ra Y_{r+R}$; each element in $Y_{r+R}$ has at most $\lambda$ preimages under this map.
  Therefore,
  \[
   p_\Xi (r) \leq \lambda p_{\Xi_1} (r+C).
  \]

 \begin{figure}[ht]
 \begin{center}
  \includegraphics[scale=0.8]{./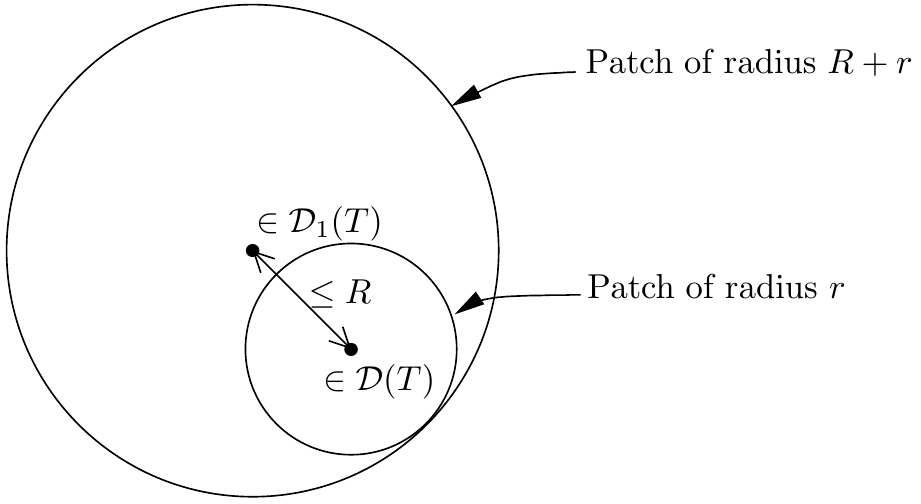}
 \end{center}
 \caption{$\ron D_1$ is sparser than $\ron D$; yet, any patch of size $r$ centered at a point of $\ron D$ is a subpatch of some patch of size $R+r$ centred at a point of $\ron D_1$.}
 \label{fig:indep-xi}
\end{figure}
 \end{proof} 

 Since the choice of a transversal does not change the asymptotic behaviour of the complexity function, it is acceptable to write ``the complexity function of $T$'' when there is no risk of ambiguity. It is understood that an unspecified transversal is chosen, the choice of which is not important.

 \begin{rem}
  It can also be proved that these pointed complexity functions $p_\Xi$ are equivalent to an unpointed complexity function $p$, where $p(r)$ is the number of equivalence classes up to translation of patches of the form $T \cap B(0,r)$, for $T \in \Omega$. It is proved in~\cite[Proposition~1.1.20]{Jul-phd}.
 \end{rem}

 Changing the transversal does not change the complexity. It is worth noticing that applying an invertible local derivation to a tiling also preserves the complexity function.
It can be proved directly by remarking that a local derivation is essentially a map from patches to patches.
The proof is not included here. See Theorem~\ref{thm:frank-sadun} for a more general result.

 \begin{prop}
  Let $T \in \Omega$ and $T' \in \Omega'$ be two tilings which are mutually locally derivable. Let $\ron D (T)$ be a Delaunay set derived from $T$ (and hence locally derived from $T'$ by mutual local derivability of $T$ and $T'$). Call $\Xi \subset \Omega$ and $\Xi' \subset \Omega'$ the canonical transversals relative to these Delaunay sets. Then their complexity functions satisfy:
  \[
   p_\Xi(r-c) \leq p_{\Xi'}(r) \leq p_\Xi (r+C),
  \]
  for $r$ big enough.
 \end{prop}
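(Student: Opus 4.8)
The plan is to exploit the observation made in the surrounding text — that a local derivation is essentially a map from pointed patches to pointed patches — and to run it symmetrically using both the derivation $\phi: \Omega \ra \Omega'$ realizing the mutual local derivability and its inverse. Write $T' = \phi(T)$. The first point to make precise is that, since $\ron D(T)$ is locally derived from $T$ and (via $\phi^{-1}$) also from $T'$, the two transversals $\Xi$ and $\Xi'$ are pointed at the \emph{same} set of marked points $D := \ron D(T) \subset \R^d$. Consequently both complexity functions count exact (un-translated) pointed patches over this common index set, namely $p_\Xi(r) = \card\{(T - x) \cap B(0,r) \tq x \in D\}$ and $p_{\Xi'}(r) = \card\{(T' - x) \cap B(0,r) \tq x \in D\}$, and the whole proof reduces to comparing these two counts.

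Next I would upgrade the one-ball locality condition of Definition~\ref{def:loc-deriv} to a radius-controlled form: there is a constant $R_1 > 0$ such that for every $S \in \Omega$ and every $\rho > 0$, the patch $\phi(S) \cap B(0,\rho)$ is determined by $S \cap B(0, \rho + R_1)$. This is routine — cover $B(0,\rho)$ by finitely many balls $B(x_i,r)$ with centres $x_i \in B(0,\rho)$, apply the translation-covariant local rule ``$\phi(S) \cap B(x,r)$ depends only on $S \cap B(x,R)$'' at each centre, and note $B(x_i,R) \subset B(0,\rho+R)$ so that $R_1 = R$ works — but since it carries the geometric content I would isolate it as a one-line lemma. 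The same reasoning applied to $\phi^{-1}$ gives a constant $R_2 > 0$ with $\phi^{-1}(S') \cap B(0,\rho)$ determined by $S' \cap B(0,\rho + R_2)$.

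The core of the argument is then the covariance identity $T' - x = \phi(T) - x = \phi(T - x)$, valid because local derivations commute with translations. Combined with the locality estimate this shows that $(T' - x) \cap B(0,r)$ depends only on $(T - x) \cap B(0, r + R_1)$. Hence whenever two marked points $x,y \in D$ satisfy $(T - x) \cap B(0, r+R_1) = (T - y) \cap B(0, r+R_1)$, one automatically gets $(T' - x) \cap B(0,r) = (T' - y) \cap B(0,r)$. This produces a well-defined \emph{surjection} from the set of size-$(r+R_1)$ pointed patches of $T$ onto the set of size-$r$ pointed patches of $T'$, whence $p_{\Xi'}(r) \leq p_\Xi(r + R_1)$. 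Running the identical argument with $\phi^{-1}$ in place of $\phi$ gives $p_\Xi(r) \leq p_{\Xi'}(r + R_2)$, i.e.\ $p_\Xi(r - R_2) \leq p_{\Xi'}(r)$ after reindexing. Taking $C = R_1$ and $c = R_2$ yields the claimed double inequality for all $r$ large enough (large enough that these balls actually meet marked points, so the counts are nonempty).

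The only genuine obstacle is the bookkeeping: correctly identifying the two marked point sets as one and the same, and performing the radius upgrade cleanly. Once the covariance identity $T' - x = \phi(T - x)$ is in hand, the patch-counting inequalities are immediate. I would be mildly careful that the pointed patches are compared \emph{exactly} (both re-centred at the origin) and not merely up to translation — which is precisely what the canonical-transversal topology guarantees, so no extra constant is lost there.
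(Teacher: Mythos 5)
Your proof is correct. The paper in fact omits the proof of this proposition, noting only that ``a local derivation is essentially a map from patches to patches'' and deferring to Theorem~\ref{thm:frank-sadun} for a more general result; your argument --- identifying the common marked point set, upgrading the locality condition to a radius-controlled form, and using the covariance $\phi(T-x)=\phi(T)-x$ to build a surjection from $(r+R_1)$-patches of $T$ onto $r$-patches of $T'$ --- is precisely the direct patch-to-patch comparison that remark alludes to.
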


 The equivalences between complexity functions in the propositions above are quite fine. It is appropriate for our purpose to have a coarser notion of equivalence.
 \begin{defin}\label{def:cplex-equiv}
  Two complexity functions $p_1$ and $p_2$ are \emph{equivalent} if there are $C_1,C_2,m,M > 0$ such that for all $r$ big enough:
  \[
   C_1 p_2(mr) \leq p_1(r) \leq C_2 p_2(Mr).
  \]
 \end{defin}
 
 Remark that it is indeed a coarser notion that the ones above. However, the notion of a ``patch of size $r$'' is itself not uniquely defined: it depends on a choice of norm of $\R^d$.
 By equivalence of norms in finite dimension, two different choices of norms would produce equivalent complexity functions in the sense above.
 Therefore, and unless there is a reason to prefer one norm over others, it is fine to use the coarser version.

\subsection{Repetitivity}

 The repetitivity of a tiling measures how often patches of a given size repeat.
 It was introduced in the setting of symbolic dynamics by Hedlund and Morse~\cite{MH38}.
 In a more general setting, it has been studied by Lagarias and Pleasants for Delaunay sets~\cite{LP03}.
 
 \begin{defin}
  Let $\Omega$ be a repetitive, aperiodic and FLC tiling space, and $\ron D$ be a pointing rule defining a transversal $\Xi$.
  The repetitivity function is a function $r \mapsto \ron R_{\ron D}(r)$ or $\ron R_\Xi(r)$ defined by
  \[
   \ron R_\Xi (r) = \sup \{ R > 0 \ ; \ \forall T,T_0 \in \Xi, \ \exists x \in B(0,R), \ T-x \cap B(0,r) = T_0 \cap B(0,r)  \}.
  \]
  In words: $\ron R_\Xi(r) < C$ means that for any pointed patch of size $r$ of the form $T_0 \cap B(0,r)$ and for any $T \in \Xi$, said patch occurs in $T$ within distance at most $C$ of the origin (i.e.\ $T-x$ and $T_0$ agree up to distance $r$ with $\nr{x} \leq C$).
 \end{defin}

 The repetitivity function can be interpreted in dynamical terms. Let $\Omega$ be a tiling space with canonical transversal $\Xi$.
 Then for each $r$ there is a partition $\ron P_r$ of $\Xi$ in clopen sets of the form 
 \[
   \Xi_{T,r} := \{T' \in \Xi \ ; \ T \cap B(0,r) = T' \cap B(0,r) \}.
 \]
 Then $\ron R_\Xi(r)$ measures the infimum of the length of the return vectors from any point in $\Xi$ to any element of the partition $\ron P_r$.

 Once again, it can be proved that the repetitivity function of a tiling space is independent of the choice of the transversal.
 
 \begin{prop}
  Let $\Omega$ be an aperiodic repetitive FLC tiling space, and $\Xi_1$, $\Xi_2$ be two canonical transversals corresponding to pointing rules $\ron D_1$ and $\ron D_2$ respectively.
  Then
  \[
   \ron R_{\Xi_2} (r - c) - K \leq \ron R_{\Xi_1} (r)  \leq  \ron R_{\Xi_2} (r + c) + K
  \]
  for some constants $c,K \geq 0$ and all $r$ big enough.
 \end{prop}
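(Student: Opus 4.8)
The plan is to follow the same strategy as in the proof of the analogous statement for complexity (Proposition~\ref{prop:complex-indep-xi}): introduce the common refinement $\ron D = \ron D_1 \cup \ron D_2$, which is again a local pointing rule, and let $\Xi$ be its transversal. Since $\ron D_i(T) \subseteq \ron D(T)$ for every $T$, we have $\Xi_i \subseteq \Xi$. It then suffices to establish a two-sided additive estimate between $\ron R_{\Xi_1}$ and $\ron R_\Xi$, and symmetrically between $\ron R_{\Xi_2}$ and $\ron R_\Xi$; composing the two comparisons along $\Xi_1 \leftrightarrow \Xi \leftrightarrow \Xi_2$ and absorbing constants then yields the statement, where the shifts are handled using that $\ron R$ is non-decreasing. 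Throughout, let $R_1$ denote a relative-density constant for $\ron D_1$ (every ball of radius $R_1$ meets $\ron D_1(T)$, uniformly in $T$), and recall that the pointing rules are local derivations: for $r$ large enough, whether $0$ is a $\ron D_1$- (resp.\ $\ron D$-) point of a tiling is determined by its restriction to $B(0,r)$.

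First I would prove the easy inequality $\ron R_{\Xi_1}(r) \le \ron R_\Xi(r)$ for all large $r$. Fix $T, T_0 \in \Xi_1 \subseteq \Xi$ and let $R > \ron R_\Xi(r)$; by definition there is a vector $x$ with $\nr{x} \le R$ and $(T - x) \cap B(0,r) = T_0 \cap B(0,r)$. Because $0 \in \ron D_1(T_0)$ and the two tilings agree on $B(0,r)$, locality of $\ron D_1$ forces $0 \in \ron D_1(T - x)$, i.e.\ $x \in \ron D_1(T)$, so $x$ is an admissible return vector for $\Xi_1$. As this holds for every such pair and every $R > \ron R_\Xi(r)$, we obtain $\ron R_{\Xi_1}(r) \le \ron R_\Xi(r)$.

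The reverse direction, $\ron R_\Xi(r - R_1) \le \ron R_{\Xi_1}(r) + 2R_1$, is where the main difficulty lies, since the return vectors allowed for $\Xi$ are the points of $\ron D$, which need not be points of $\ron D_1$. Given $T, T_0 \in \Xi$, relative density provides $q \in \ron D_1(T)$ and $p \in \ron D_1(T_0)$ with $\nr{q}, \nr{p} \le R_1$; by equivariance of the pointing rule, $T - q, \ T_0 - p \in \Xi_1$. Applying the definition of $\ron R_{\Xi_1}$ at radius $r$ to this pair yields a vector $y$ with $\nr{y}$ essentially bounded by $\ron R_{\Xi_1}(r)$ and $(T - q - y) \cap B(0,r) = (T_0 - p) \cap B(0,r)$. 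Setting $x = q + y - p$, a routine translation computation (using $\nr{p} \le R_1$ to shrink the ball) shows $(T - x) \cap B(0, r - R_1) = T_0 \cap B(0, r - R_1)$, while $\nr{x} \le \ron R_{\Xi_1}(r) + 2R_1$. Finally, since $0 \in \ron D(T_0)$ and $T - x$ agrees with $T_0$ on $B(0, r - R_1)$, locality of $\ron D$ gives $0 \in \ron D(T - x)$, i.e.\ $x \in \ron D(T)$, so $x$ is a legitimate return vector for $\Xi$; this proves the inequality.

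The main obstacle is precisely this second step: matching a $\ron D$-pointed patch with a $\ron D_1$-pointed one requires recentering by the auxiliary vectors $p$ and $q$, and one must check both that the recentered vector $x$ lands on a point of the \emph{coarser} set $\ron D$ (which is exactly where locality of the derivation, valid only for $r$ large, enters) and that the radius loss and the norm inflation are each controlled by the single constant $R_1$. Once both one-sided estimates are established, the analogous estimates for $\Xi_2$ follow identically, and the proposition is obtained by chaining: taking $c$ and $K$ to be the larger of the two density-derived constants and invoking monotonicity of $\ron R$ converts $\ron R_{\Xi_1}(r) \le \ron R_\Xi(r) \le \ron R_{\Xi_2}(r + c) + K$ and its symmetric counterpart into the stated two-sided bound.
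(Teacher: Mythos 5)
Your proposal is correct and follows essentially the same route as the paper: reduce to nested transversals via the union pointing rule, get the easy inequality from the inclusion of transversals, and get the reverse one by recentering both tilings onto the finer transversal with relative-density vectors of norm at most $R_1$ and applying repetitivity at radius $r+R_1$, yielding the same $2R_1$ norm inflation. Your extra verification that the resulting vector lands on a point of $\ron D$ is a harmless refinement the paper leaves implicit.
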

 
 Lagarias and Pleasants~\cite{LP03} showed that for aperiodic repetitive tilings, the repetitivity function grows at least linearly.
 It is therefore harmless to drop the constant $K$ in the theorem above, up to adjusting~$c$.

 \begin{proof}
  As in the proof of the analogous result for complexity (Proposition~\ref{prop:complex-indep-xi}), we may assume without loss of generality that $\Xi_1 \subset \Xi_2$.
  
  It is straightforward that $\ron R_{\Xi_1}(r) \leq \ron R_{\Xi_2}(r)$.
  Conversely, let $R_0$ be the a radius (obtained by relative density) such that for any $T \in \Xi_2$, there exists $x \in B(0,R_0)$ such that $T-x \in \Xi_1$.
  Let now be $r> 0$ and $T,T_0 \in \Xi_2$ be given.
  Let us show that there is a $y \in \R^d$ with $\nr{y} \leq 2R_0 + \ron R_{\Xi_2}(r+R) + \eps$ for arbitrarily small $\eps$ such that $(T-y) \cap B(0,r) = (T_0-y) \cap B(0,r)$.
  Let $x$ and $x_0$ be of norm at most $R_0$ such that $T - x$ and $T_0 -x_0$ are in $\Xi_1$.
  Next let $y_0$ be a vector such that $(T-x) - y_0$ and $T_0-x_0$ agree up to radius $r+R$.
  By definition of the repetitivity function, it can be chosen of norm at most $\ron R_{\Xi_1}(r+R)+\eps$ for any $\eps > 0$.
  Now, it means that $(T-x-y_0) +x_0$ and $T_0$ agree up to radius $r$, since $\nr{x_0} \leq R_0$.
  So if $y=y_0+x-x_0$, then $T-y$ and $T_0$ agree up to radius $r$.
  We have control on the norm of $y$, so that:
  \[
   \ron R_{\Xi_2}(r) \leq \ron R_{\Xi_1}(r+R_0) + 2 R_0.
  \]
 \end{proof}

\subsection{Formalism of symbolic dynamics}\label{sec:symbolic}

 Tiling spaces also appear as suspension of subshifts. A $d$-dimensional subshift is a subset of $\ron A^{\Z^d}$, where $\ron A$ is a finite set of symbols (finite alphabet).
 There is a natural action $\sigma$ of $\Z^d$ on this set, defined by
 \[
  [\sigma^n (\omega)]_k = \omega_{k-n},
 \]
 for $n \in \Z^d$.
 
 Given $\omega \in \ron A^{\Z^d}$, the subshift associated with $\omega$ is the smaller subset of $\ron A^{\Z^d}$ which contains $\omega$, is closed and is shift-invariant. Equivalently, it is the closure of the orbit of $\omega$ for the product topology.
 
 The notions of aperiodicity and repetitivity are straightforward. Finite local complexity is automatic if $\ron A$ is finite.
 
 \begin{defin}
  Let $\Xi$ be an aperiodic, repetitive subshift on $\Z^d$. The \emph{suspension} of $\Xi$ is the following space:
  \[
   \ron S \Xi = \Xi \times \R^d / \sim
  \]
  where $\sim$ is the equivalence relation generated by
  \[
   (\omega , x) \sim (\sigma^n (\omega), x-n), \quad n \in \Z^d.
  \]
  It carries naturally a $\R^d$-action by translations.
 \end{defin} 

 In practice, it is convenient to view elements of $\Xi$ as tilings by $d$-cubes of edge-length~$1$, and $(\omega,x) \in \ron S \Xi$ can be interpreted as $\omega+x$.
 The tiling topology, as defined above, coincides with the quotient topology on this suspension.

 The following definition is standard.
 \begin{defin}
  Two subshifts $\Xi_1$ and $\Xi_2$ are said to be \emph{flow-equivalent} if their suspensions are homeomorphic.
 \end{defin}

 The main results of this paper can therefore be interpreted in a symbolic setting: they measure how the complexity and repetitivity function of two subshifts relate when the subshifts are flow equivalence.

\section{Homeomorphisms between tiling spaces}

 \subsection{Statement of the results}\label{ssec:statement}

  In this paper, the basic setting is to consider a homeomorphism between two aperiodic repetitive tiling spaces with finite local complexity.
  What can be said about the underlying tilings if the two tiling spaces are homeomorphic?
 % The question is natural: if one studies spaces instead of tilings, it is important to know how a statement about the spaces translates back into a statement about the tilings.
  
  The first result is concerned with the complexity function of the tiling spaces: if two spaces are homeomorphic, the complexity functions are asymptotically equivalent in the sense of definition~\ref{def:cplex-equiv}.
  If the complexity is high, then the tiling systems may have entropy.
  In this case, the entropy is not preserved by homeomorphism (even though the fact that it is positive is).
  However, if the complexity grows polynomially, then the exponent is preserved by homeomorphism.

 \begin{thm}\label{thm:main}
  Let $h: \Omega \lra \Omega'$ be a homeomorphism between two aperiodic repetitive tiling spaces with finite local complexity.
  Then there exist a transversal $\Xi \subset \Omega$, a transversal $\Xi' \subset \Omega'$ and constants $c,C,m,M > 0$ such that for all $r$ big enough,
  \[
   c p_{\Xi'} (m r) \leq p_\Xi (r) \leq C p_{\Xi'} (M r).
  \]
 \end{thm}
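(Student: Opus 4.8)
The plan is to reduce the statement to a comparison of the combinatorial metric structures on two well-chosen transversals. Recall that $p_\Xi(r)$ equals the number of atoms of the clopen partition $\ron P_r$ of $\Xi$ into the sets $\Xi_{T,r} = \{T' \in \Xi \tq T \cap B(0,r) = T' \cap B(0,r)\}$, and that by Proposition~\ref{prop:complex-indep-xi} the precise choice of transversal is irrelevant up to the equivalence of Definition~\ref{def:cplex-equiv}. The first step is to invoke Theorem~\ref{thm:hT} and Corollary~\ref{cor:lipschitz}: the homeomorphism $h$ is an orbit equivalence, with $h(T-x) = h(T) - h_T(x)$ and a uniform bound $\nr{h_T(x)} \leq M \nr{x} + C$. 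Applying the same to $h^{-1}$ yields a two-sided affine control on the cocycle, so that $h$ preserves the $\R^d$-orbit (leaf) structure and distorts the $\R^d$-direction by at most a fixed affine amount, uniformly in $T$.

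Next I would fix a vertical transversal $\Xi'$ of $\Omega'$ for the solenoid structure of Theorem~\ref{thm:box}. The set $h^{-1}(\Xi')$ is transverse to the orbits but is not vertical; using the straightening Lemma~\ref{lem:redressement} I would replace it, up to a bounded motion along orbits, by an honest vertical transversal $\Xi \subset \Omega$. This produces a correspondence between the Cantor transversals $\Xi$ and $\Xi'$ (the transverse part of $h$), and the whole problem becomes showing that this correspondence is quasi-Lipschitz for the two combinatorial metrics.

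The heart of the argument is this quasi-Lipschitz estimate (Lemma~\ref{lem:quasi-lipschitz}). By uniform continuity of $h$ on the compact space $\Omega$ for the tiling distance $D$, closeness of $T$ and $T'$ forces closeness of $h(T)$ and $h(T')$; the uniform cocycle bound upgrades this to a \emph{linear} comparison of agreement radii, and finite local complexity converts ``agreement up to a small translation on a large ball'' into ``exact agreement on a large ball'' on the transversal, since a local pointing rule selects uniformly separated points and two nearly-coinciding pointed patches must coincide. Concretely I would establish constants such that whenever $T, T' \in \Xi$ agree on $B(0, Mr + C)$ their images $h(T), h(T')$ agree on $B(0,r)$, together with the symmetric statement for $h^{-1}$.

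This estimate says exactly that, transported through $h$, the partition $\ron P_r$ of $\Xi'$ is coarser than the partition $\ron P_{Mr+C}$ of $\Xi$, up to a uniformly bounded multiplicity coming from the bounded orbit motion introduced by the straightening (only finitely many transverse preimages fit into the allowed orbit window, by FLC). Counting atoms gives an inequality $p_{\Xi'}(r) \leq \lambda\, p_\Xi(Mr + C)$, and running the same argument for $h^{-1}$ gives the companion bound $p_\Xi(r) \leq \lambda' p_{\Xi'}(M'r + C')$; since the complexity functions are non-decreasing the additive constants are absorbed into the dilations for $r$ large, so together these are the inequalities of the theorem, and Proposition~\ref{prop:complex-indep-xi} removes the dependence on the particular transversals. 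I expect the main obstacle to be the quasi-Lipschitz lemma of the third paragraph: obtaining a \emph{linear} (rather than an arbitrary modulus-of-continuity) relation between agreement radii, uniformly over all $T$, is precisely where the uniform cocycle bound of Theorem~\ref{thm:hT} is indispensable, and where finite local complexity is needed to pass from metric closeness to exact combinatorial agreement.
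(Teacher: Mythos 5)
Your overall architecture matches the paper's: straighten $h$ with Lemma~\ref{lem:redressement} so that it carries a vertical transversal $\Xi$ onto a vertical transversal $\Xi'$, prove a linear comparison of agreement radii, and count atoms of the partitions $\ron P_r$. But there is a genuine gap at the step you yourself identify as the heart of the argument. To show that $h(T_1)$ and $h(T_2)$ agree on a ball of radius comparable to $r$ when $T_1,T_2\in\Xi$ agree on a ball of radius $r$ plus constants, it is not enough to know that for each $x$ with $\nr{x}\leq r$ the tilings $T_1-x$ and $T_2-x$ are close, hence $h(T_1)-h_{T_1}(x)$ and $h(T_2)-h_{T_2}(x)$ are close, and that $\nr{h_{T_i}(x)}\leq M\nr{x}+C$. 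The two cocycles $h_{T_1}$ and $h_{T_2}$ are a priori different maps, so the two image tilings are being compared after translation by two a priori different vectors $h_{T_1}(x)$ and $h_{T_2}(x)$; the uniform bound of Corollary~\ref{cor:lipschitz} controls their size but says nothing about their difference. Without showing $h_{T_1}(x)=h_{T_2}(x)$ (or at least that the discrepancy is much smaller than a tile), you cannot convert ``the translates agree near the origin'' into ``$h(T_1)$ and $h(T_2)$ agree near the point $h_{T_1}(x)$''. The paper flags exactly this failure (``there is no guarantee that $h_{T_1}(x)=h_{T_2}(x)$'') and repairs it with a dedicated device: decompose the displacement $x$ into a chain of return vectors of length at most $3R$ (Lemma~\ref{lem:jump}), and use the fact that for a fixed short return vector $v$ the map $T\mapsto h_{\eps,T}(v)$ is continuous from the compact set $\Xi_v$ into a discrete set, hence locally constant (Lemma~\ref{lem:vect-retour}); since $T_1-x_k$ and $T_2-x_k$ agree to a large radius at every step before termination, each hop produces the \emph{same} return vector $w_k$ in $\Xi'$ for both tilings, and summing yields a common translation $y$ with $\nr{y}\leq M(\nr{x}+3R)$.

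Your phrase ``finite local complexity converts agreement up to a small translation into exact agreement'' gestures at the right phenomenon---it is precisely the discreteness exploited in Lemma~\ref{lem:vect-retour}---but as written it is applied to the patches rather than to the cocycle values, and no mechanism is given for propagating exact agreement of the two cocycles from the origin out to distance $r$. That propagation is the content of the paper's algorithm and is where the Delaunay property of the pointing rule and FLC actually enter; it cannot be absorbed into ``uniform continuity plus the affine cocycle bound''. The remainder of your outline (the counting of atoms, the absorption of additive constants for large $r$, the symmetric argument for $h^{-1}$, and the appeal to Proposition~\ref{prop:complex-indep-xi}) is sound and agrees with the paper.
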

 
 \begin{cor}
  Let $\Omega$ be a (repetitive, aperiodic, FLC) tiling space, with polynomial complexity function, which means that there exists an $\alpha$ such that for some $\Xi$ (and hence for all $\Xi$), there are two constants $C_1$ and $C_2$ such that for all $r$ big enough,
  \[
   C_1 r^\alpha \leq p(r) \leq C_2 r^\alpha.
  \]
  Then the exponent $\alpha$ is a homeomorphism invariant of the space: any tiling space which is homeomorphic to $\Omega$ has polynomial complexity function with the same exponent $\alpha$.
 \end{cor}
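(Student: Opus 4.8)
The plan is to feed the polynomial bounds on $\Omega$ into the inequality provided by Theorem~\ref{thm:main} and then observe that dilating the argument of a power function only rescales the leading constant, leaving the exponent untouched. Concretely, let $h: \Omega \lra \Omega'$ be a homeomorphism, where $\Omega$ satisfies $C_1 r^\alpha \leq p_\Xi(r) \leq C_2 r^\alpha$ for $r$ large. Theorem~\ref{thm:main} supplies transversals $\Xi \subset \Omega$, $\Xi' \subset \Omega'$ and constants $c,C,m,M > 0$ with
\[
c\, p_{\Xi'}(mr) \leq p_\Xi(r) \leq C\, p_{\Xi'}(Mr)
\]
for all $r$ large enough. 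By Proposition~\ref{prop:complex-indep-xi} the polynomial hypothesis, stated ``for some $\Xi$'', holds for this particular transversal as well (a change of transversal alters only the argument by a bounded shift and the value by a multiplicative constant, neither of which affects a polynomial exponent); this is exactly what justifies the ``and hence for all $\Xi$'' clause in the statement.

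First I would extract a lower bound for $p_{\Xi'}$. The right-hand inequality combined with the lower polynomial bound gives $C\, p_{\Xi'}(Mr) \geq p_\Xi(r) \geq C_1 r^\alpha$; substituting $s = Mr$ and using $(s/M)^\alpha = M^{-\alpha} s^\alpha$ yields a bound of the form $p_{\Xi'}(s) \geq C_1' s^\alpha$ with $C_1' = C_1/(C M^\alpha)$. Symmetrically, the left-hand inequality together with the upper polynomial bound gives $c\, p_{\Xi'}(mr) \leq p_\Xi(r) \leq C_2 r^\alpha$, and after the substitution $s = mr$ this becomes $p_{\Xi'}(s) \leq C_2' s^\alpha$ with $C_2' = C_2/(c m^\alpha)$. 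Combining the two, $C_1' s^\alpha \leq p_{\Xi'}(s) \leq C_2' s^\alpha$ for all $s$ large, so $\Omega'$ has polynomial complexity with the \emph{same} exponent $\alpha$, as claimed. (Since a large $r$ makes $Mr$ and $mr$ large, the ``big enough'' thresholds combine without difficulty.)

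There is no genuine obstacle here: the entire analytic content resides in Theorem~\ref{thm:main}, and this corollary is precisely the elementary observation recorded in remark~(2) following that theorem, namely that the dilations $r \mapsto mr$ and $r \mapsto Mr$ can be absorbed into the multiplicative constants when the complexity obeys a power law. The only point deserving a word of care is the implicit well-definedness of the exponent $\alpha$ as an invariant of $\Omega$ itself, which is precisely what Proposition~\ref{prop:complex-indep-xi} secures.
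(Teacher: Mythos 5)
Your proposal is correct and follows exactly the route the paper intends: the corollary is stated without a separate proof precisely because it is the elementary substitution you perform, absorbing the dilations $r \mapsto mr$ and $r \mapsto Mr$ into the multiplicative constants of the power law (this is remark~(2) after the theorem in the introduction), with Proposition~\ref{prop:complex-indep-xi} handling the independence of the transversal. Nothing is missing.
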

 
 In the language of symbolic dynamics, the results above can be stated as follows: If two minimal aperiodic $\Z^d$-subshifts are flow-equivalent, then their complexity functions are equivalent in the sense above.

 It is interesting to combine these results with some other results obtained previously on complexity.
 It was remarked in~\cite{JS11} that the complexity of a tiling can be related with the box-counting dimension of the transversals for the metric defined in Section~\ref{ssec:tilings}.
\begin{prop}[\cite{JS11}]
 Given a tiling space $\Omega$ and a canonical transversal $\Xi$, endowed with the metric defined before, the box-counting dimension of $\Xi$ is given by the formula:
 \[
 \dim_{\mathrm B} (\Xi) = \lim_{r \ra +\infty} \frac{\log(p_\Xi(r))}{\log(r)}
 \]
 when it exists.
 %It coincides with the exponent of the leading term when the complexity is polynomial.
\end{prop}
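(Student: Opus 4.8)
The plan is to read off the box-counting dimension directly from the patch combinatorics, using that the metric $D'$ on $\Xi$ is (up to the usual open/closed conventions) an ultrametric: if $T,T'$ agree on $B(0,R_1)$ and $T',T''$ agree on $B(0,R_2)$, then $T,T''$ agree on $B(0,\min(R_1,R_2))$, so $D'(T,T'') \leq \max(D'(T,T'),D'(T',T''))$. The key bookkeeping device is that for each $R>0$ the relation ``$T\cap B(0,R)=T'\cap B(0,R)$'' partitions $\Xi$ into exactly $p_\Xi(R)$ clopen classes, two tilings in the same class being at distance $D'\leq 1/R$ and two tilings in distinct classes at distance $D'\geq 1/R$.

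I would first fix the separated-set formulation of box dimension: writing $N(\delta)$ for the largest cardinality of a $\delta$-separated subset of $\Xi$ (points pairwise at distance $>\delta$), it is standard that $\dim_{\mathrm{B}}(\Xi)=\lim_{\delta\ra 0}\log N(\delta)/\log(1/\delta)$ whenever this limit exists (the upper and lower box dimensions being the corresponding $\limsup$ and $\liminf$). The finiteness of $N(\delta)$ is guaranteed by finite local complexity, since $N(\delta)\leq p_\Xi(1/\delta)<\infty$.

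Next I would establish the sandwich
\[
 p_\Xi(R)\leq N(\delta)\leq p_\Xi(1/\delta)\qquad\text{for every }R<1/\delta.
\]
For the upper bound, if $D'(T,T')>\delta$ then $T$ and $T'$ cannot agree on $B(0,1/\delta)$, so the points of any $\delta$-separated set lie in pairwise distinct classes at scale $1/\delta$, of which there are $p_\Xi(1/\delta)$. For the lower bound, choosing one representative in each of the $p_\Xi(R)$ classes at scale $R$ yields a family that pairwise disagrees on $B(0,R)$, hence is pairwise at distance $\geq 1/R>\delta$; this family is $\delta$-separated, so $N(\delta)\geq p_\Xi(R)$. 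The strict inequality $R<1/\delta$ is precisely what lets me avoid any tangency issue at the boundary sphere, so no boundary case ever arises.

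Finally, setting $r=1/\delta$ and taking $R=r-1$, the sandwich gives
\[
 \frac{\log p_\Xi(r-1)}{\log r}\leq\frac{\log N(\delta)}{\log(1/\delta)}\leq\frac{\log p_\Xi(r)}{\log r}.
\]
Since $\log(r-1)/\log r\ra 1$ and $p_\Xi$ is nondecreasing, both outer terms converge to the assumed limit $L=\lim_{r\ra\infty}\log p_\Xi(r)/\log r$, and a squeeze yields $\dim_{\mathrm{B}}(\Xi)=L$, which is the claimed formula. The only genuinely delicate point is the matching of scales, namely verifying that the offset between $R$ and $1/\delta$ washes out in the logarithmic limit; finite local complexity, which makes $p_\Xi$ finite and piecewise constant, is exactly what renders this harmless, so I expect this scale-matching to be the main (and essentially the only real) obstacle.
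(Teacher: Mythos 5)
Your argument is correct. Note that the paper does not prove this proposition at all --- it is quoted from \cite{JS11} --- so there is no internal proof to compare against; your proof (the ultrametric observation, the identification of the $p_\Xi(R)$ agreement classes with the balls/separated sets at scale $1/R$, and the two-sided counting estimate followed by a squeeze on the logarithmic ratio) is exactly the natural argument and, in substance, the one given in the cited reference.
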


 \begin{cor}\label{cor:dimbox}
  The box-counting dimension of the transversals of FLC, aperiodic, repetitive tiling spaces is preserved by homeomorphism between the tiling spaces.
 \end{cor}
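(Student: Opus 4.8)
The plan is to deduce this directly from Theorem~\ref{thm:main} together with the formula of~\cite{JS11} relating complexity to box dimension; essentially all of the work has been done in Theorem~\ref{thm:main}, and what remains is to check that the coarse equivalence of Definition~\ref{def:cplex-equiv} is precisely the notion under which the dimension formula is invariant.

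First I would record that $\dim_{\mathrm B}$ of a transversal depends only on the space $\Omega$, not on the chosen transversal. Indeed, Proposition~\ref{prop:complex-indep-xi} gives, for two transversals $\Xi_1,\Xi_2$ of the same space, constants with $\lambda_1 p_{\Xi_2}(r-c) \leq p_{\Xi_1}(r) \leq \lambda_2 p_{\Xi_2}(r+C)$. Applying $\log(\cdot)/\log r$ and letting $r\to\infty$, the factors $\lambda_i$ contribute a term tending to $0$, while the additive shifts in the argument only rescale the denominator by $\log(r\pm c)/\log r \to 1$; hence
\[
 \lim_{r\to\infty} \frac{\log p_{\Xi_1}(r)}{\log r} = \lim_{r\to\infty} \frac{\log p_{\Xi_2}(r)}{\log r}
\]
whenever either side exists (and the same holds unconditionally for the upper and lower limits). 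By the proposition of~\cite{JS11}, this common value is $\dim_{\mathrm B}$ of any transversal for the combinatorial metric, so the box dimension is a well-defined invariant of each space.

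Next I would feed the homeomorphism $h$ into Theorem~\ref{thm:main}, obtaining transversals $\Xi\subset\Omega$, $\Xi'\subset\Omega'$ and constants $c,C,m,M>0$ with $c\,p_{\Xi'}(mr)\leq p_\Xi(r)\leq C\,p_{\Xi'}(Mr)$ for $r$ large. Exactly as above, passing to $\log(\cdot)/\log r$ annihilates the multiplicative constants $c,C$ in the limit, while the dilations $m,M$ inside the argument contribute factors $\log(mr)/\log r\to1$ and $\log(Mr)/\log r\to1$. Consequently the upper box dimensions of $\Xi$ and $\Xi'$ coincide, and likewise the lower ones; in particular, if the defining limit exists for one space it exists for the other and the two values agree. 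Combined with the transversal-independence of the previous paragraph, this shows $\dim_{\mathrm B}$ is preserved by $h$.

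I do not anticipate a genuine obstacle: the entire dynamical content is packaged into Theorem~\ref{thm:main}, leaving only the elementary fact that replacing $r$ by $mr$ or shifting it by a constant perturbs $\log r$ by a lower-order amount. The one point deserving care is the hypothesis ``when it exists'' in the formula of~\cite{JS11}: rather than presuppose the limit, I would phrase the argument throughout in terms of $\limsup$ and $\liminf$, so that the \emph{upper} and \emph{lower} box dimensions are each shown to be invariant, with the statement for the honest box dimension following as the special case where the limit exists.
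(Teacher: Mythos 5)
Your proposal is correct and follows exactly the route the paper intends: the corollary is stated as an immediate consequence of Theorem~\ref{thm:main} together with the formula $\dim_{\mathrm B}(\Xi)=\lim_{r\to\infty}\log(p_\Xi(r))/\log(r)$ from~\cite{JS11}, and your verification that multiplicative constants and dilations of the argument are absorbed in the $\log(\cdot)/\log r$ limit is precisely the (omitted) computation. Your extra care with $\limsup$/$\liminf$ and with transversal-independence via Proposition~\ref{prop:complex-indep-xi} is a sound refinement, not a deviation.
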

 It is striking to see that a homeomorphism between two tiling spaces preserves a quantity which is essentially \emph{metric}.
 
 It is not the first time that links between topology and complexity are investigated.
 In~\cite{Jul10}, some relationships were established between the rank of the cohomology groups over the rationals on the one hand, and the asymptotic growth of the complexity function.
 For example in dimension~$1$, a tiling space with at most linear complexity has finitely generated \v{C}ech cohomology groups over the rationals.
 For cut-and-project tilings, the interplay is even richer: the complexity function is bounded above by $r^d$ (with $d$ the dimension of the space) if and only if the rational cohomology groups are finitely generated.
 The results of this paper push further in this direction: the complexity is not only loosely related with the topology of the space; it really is homeomorphism-invariant.

 The repetitivity function of a tiling is another quantity which is preserved by homeomorphism, as shown by the following result.
 
 \begin{thm}\label{thm:repet}
  Let $h: \Omega \lra \Omega'$ be a homeomorphism between two aperiodic repetitive tiling spaces with finite local complexity.
  Then there exist a transversal $\Xi \subset \Omega$ and a transversal $\Xi' \subset \Omega'$ and four constants $c,C,m,M > 0$ such that for all $r$ big enough,
  \[
   c \ron R_{\Xi'} (m r) \leq \ron R_\Xi (r) \leq C \ron R_{\Xi'} (M r).
  \] 
 \end{thm}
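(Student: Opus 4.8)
The plan is to run the same scheme as for Theorem~\ref{thm:main}, exploiting that the repetitivity function is an even more directly \emph{dynamical} quantity than the complexity: by the interpretation recorded after its definition, $\ron R_\Xi(r)$ is the supremum over pairs $T,T_0\in\Xi$ of the length of a shortest return vector $x$ with $(T-x)\cap B(0,r)=T_0\cap B(0,r)$. Two ingredients control the two features of this quantity under $h$. The orbit equivalence with controlled cocycle (Theorem~\ref{thm:hT} and Corollary~\ref{cor:lipschitz}) governs how \emph{return vectors} transform, via $h(T-x)=h(T)-h_T(x)$ and $\nr{h_T(x)}\leq M\nr{x}+C$; and the quasi-Lipschitz estimate (Lemma~\ref{lem:quasi-lipschitz}) governs how \emph{radii of agreement} transform. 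Since $h^{-1}$ is again a homeomorphism between aperiodic repetitive FLC tiling spaces, everything below applies symmetrically, and the two inequalities of the theorem are obtained from one another by exchanging the roles of $\Omega$ and $\Omega'$.

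First I would fix the transversals. Using Lemma~\ref{lem:redressement} together with the orbit-equivalence result, I would choose canonical transversals $\Xi\subset\Omega$ and $\Xi'\subset\Omega'$ so that $h$ carries $\Xi$ onto $\Xi'$ up to a bounded adjustment; this is harmless since $\ron R$ is transversal-independent up to the fine equivalence already proved, and $h(\Xi)$ does meet each orbit in a uniformly discrete, relatively dense set because both $h_T$ and $h_T^{-1}=(h^{-1})_{h(T)}$ are quasi-Lipschitz by Corollary~\ref{cor:lipschitz} applied to $h$ and to $h^{-1}$. I would then record the linear combinatorial modulus furnished by Lemma~\ref{lem:quasi-lipschitz}: there are constants $a,b>0$ such that $T_1\cap B(0,R)=T_2\cap B(0,R)$ forces $h(T_1)\cap B(0,aR-b)=h(T_2)\cap B(0,aR-b)$, and symmetrically for $h^{-1}$. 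The mechanism behind this, should I need to reproduce it, is a covering argument: as $x$ sweeps $B(0,R)$ the translates $T_1-x$, $T_2-x$ agree on a fixed small ball, hence so do their images, while the vectors $h_{T_1}(x)$ sweep a ball of radius linear in $R$ by the quasi-Lipschitz bound on $h_{T_1}^{-1}$.

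The estimate is then short. Given $r$ and any $T,T_0\in\Xi$, choose a return vector $x$ with $\nr{x}\leq\ron R_\Xi(r)+\eps$ realizing $(T-x)\cap B(0,r)=T_0\cap B(0,r)$. Setting $y=h_T(x)$, the cocycle bound gives $\nr{y}\leq M\,\ron R_\Xi(r)+C'$, while the modulus gives that $h(T)-y=h(T-x)$ agrees with $h(T_0)$ on $B(0,ar-b)$. As $T,T_0$ range over $\Xi$ their images range over all of $\Xi'$, whence
\[
 \ron R_{\Xi'}(ar-b)\leq M\,\ron R_\Xi(r)+C'.
\]
Reparametrizing $r$ and invoking the at-least-linear growth of $\ron R$ (Lagarias--Pleasants~\cite{LP03}) to absorb the shift $b$ and the additive constant, this yields the left-hand inequality $c\,\ron R_{\Xi'}(mr)\leq\ron R_\Xi(r)$; the same argument applied to $h^{-1}$ gives the right-hand inequality $\ron R_\Xi(r)\leq C\,\ron R_{\Xi'}(Mr)$.

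The step I expect to be the main obstacle is the transversal bookkeeping of the second paragraph: a homeomorphism need not send a vertical transversal to a vertical one, so some care via Lemma~\ref{lem:redressement} is needed to replace $h(\Xi)$ by an honest canonical transversal $\Xi'$ while keeping \emph{both} the cocycle bound and the combinatorial modulus linear. Everything else---the covering argument behind Lemma~\ref{lem:quasi-lipschitz} and the absorption of additive constants permitted by the linear lower bound on $\ron R$---is routine once this identification is in place.
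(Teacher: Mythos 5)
Your proposal is correct and follows essentially the same route as the paper: both fix transversals via Lemma~\ref{lem:redressement}, transport the radius of agreement using the linear estimate established in the proof of Theorem~\ref{thm:main}, transport the return vector using the quasi-Lipschitz cocycle bounds of Lemma~\ref{lem:quasi-lipschitz} and Corollary~\ref{cor:lipschitz}, and obtain the second inequality by exchanging $h$ and $h^{-1}$ (the paper merely phrases the transport in terms of clopen cylinder sets $U_P$ and a partition of $\Xi'$ rather than pointwise). One small inaccuracy: the ``combinatorial modulus'' is not furnished by Lemma~\ref{lem:quasi-lipschitz} alone, and your parenthetical sketch of its mechanism skips the real difficulty (ensuring $h_{T_1}(x)=h_{T_2}(x)$, which the paper handles with the return-vector algorithm and Lemma~\ref{lem:vect-retour}); since you are entitled to quote that statement from the proof of Theorem~\ref{thm:main}, this does not affect the argument.
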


 The study of the repetitivity function is a quantitative approach. It is somehow more common in the literature to make the qualitative distinction between \emph{linearly repetitive} tiling spaces and the others.
 A tiling space is linearly repetitive if its repetitivity function grows no faster than $r \mapsto \lambda r$ for some constant $\lambda$.
 Linearly repetitive tilings are amongst those with the highest degree of regularity. Many of the tilings used to model quasicrystals have this property, including the well-known Penrose tilings.
 This result then follows from the theorem.
 
 \begin{cor}
  Let $h: \Omega \lra \Omega'$ be a homeomorphism as in the previous theorem.
  Then $(\Omega,\R^d)$ is linearly repetitive if and only if $(\Omega',\R^d)$ is.
 \end{cor}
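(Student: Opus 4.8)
The plan is to deduce this directly from Theorem~\ref{thm:repet} by a rescaling argument; there is essentially no new content beyond that theorem. First I would record the only genuinely necessary preliminary observation: linear repetitivity is a property of the space and not of the particular transversal. Indeed, the proposition on independence of the repetitivity function gives $\ron R_{\Xi_2}(r-c) - K \leq \ron R_{\Xi_1}(r) \leq \ron R_{\Xi_2}(r+c) + K$ for any two transversals, so a bound $\ron R_{\Xi_1}(r) \leq \lambda r$ forces a linear bound (with an adjusted constant) on $\ron R_{\Xi_2}$. Consequently it is harmless to test linear repetitivity on the specific transversals $\Xi \subset \Omega$ and $\Xi' \subset \Omega'$ produced by Theorem~\ref{thm:repet}.

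Assuming $(\Omega,\R^d)$ is linearly repetitive, I would write $\ron R_\Xi(r) \leq \lambda r$ for some $\lambda > 0$ and all large $r$, and feed this into the left-hand inequality $c\,\ron R_{\Xi'}(mr) \leq \ron R_\Xi(r)$ of the theorem. This yields $\ron R_{\Xi'}(mr) \leq (\lambda/c)\,r$, and substituting $s = mr$ gives $\ron R_{\Xi'}(s) \leq (\lambda/(cm))\,s$, so $(\Omega',\R^d)$ is linearly repetitive.

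The converse is entirely symmetric: starting from $\ron R_{\Xi'}(r) \leq \lambda' r$ and using the right-hand inequality $\ron R_\Xi(r) \leq C\,\ron R_{\Xi'}(Mr)$, I would obtain $\ron R_\Xi(r) \leq C\lambda'M\,r$, so $(\Omega,\R^d)$ is linearly repetitive. Combining the two implications proves the equivalence. I do not expect any serious obstacle here; the corollary is a formal consequence of the two-sided bound, and the only point requiring attention is the transfer of the linear bound between the theorem's transversals and arbitrary ones, which is exactly what the independence proposition recalled above supplies.
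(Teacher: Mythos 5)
Your proposal is correct and is exactly the argument the paper intends: the paper simply asserts that the corollary ``follows from the theorem,'' and your rescaling of the two-sided bound from Theorem~\ref{thm:repet}, together with the observation that the transversal-independence proposition lets one test linear repetitivity on the specific transversals the theorem provides, is the complete formal content of that deduction.
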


 \subsection{Homeomorphisms and tiling groupoids}
 
 Given a dynamical system, it is possible to define a groupoid which carries information on the space, the group, and the action of the latter on the former. Groupoids also serve as a generalisation of group action, when there is no group.
 For example, a tiling space carries an action of $\R^d$.
 In one dimension, this action of $\R$ restricts to an action of $\Z$ on a transversal~$\Xi$ (by the first return map).
 In higher dimension, there is in general no group action on the transversal; however, a groupoid can be defined as a replacement.
 
 While the groupoid point-of-view can be enlightening, it is not essential for most of the paper. Groupoids can be ignored until the Section~\ref{sec:deformations}.
 The crucial elements of this section---which shouldn't be skipped at first---are Definition~\ref{def:hT} and Theorem~\ref{thm:hT} below.
 
 \begin{defin}
  A groupoid $G$ is defined as a small category with inverses. It consists of a set of base points $G^{(0)}$, and a set of arrows $G$ between them.
  Given $x \in G$, it has a source and a range $s(x)$ and $r(x)$ in $G^{(0)}$ and the product $xy$ is well defined if $r(y) = s(x)$.
  For all $p \in G^{(0)}$ there is an element $e_p \in G$ which is neutral for multiplication (and $p$ is identified with $e_p$, so that $G^{(0)} \subset G$), and for all $x$ there is $x^{-1}$ such that $xx^{-1} = r(x)$, $x^{-1}x = s(x)$.
  Finally, the product is associative, whenever it is defined.
 \end{defin}
 
 \begin{defin}
  A tiling space $\Omega$ has an associated groupoid, noted $\Omega \rtimes \R^d$, which is defined as the direct product of $\Omega$ and $\R^d$, with range map, source map and product defined as follows:
  \begin{gather*}
   s(T,x) = T,   \qquad \qquad r(T,x) = T-x, \\
   (T,x) \cdot (T',x') = (T',x+x') \quad \text{ if } \quad T'-x'=T.
  \end{gather*}
  The topology on this groupoid is that induced by $\Omega \times \R^d$.
 \end{defin}

 Consider a homeomorphism $h: \Omega \lra \Omega'$ between two aperiodic, FLC and minimal tiling spaces.
 By Theorem~\ref{thm:box}, the path-connected components in such tiling spaces are the orbits. Since a homeomorphism has to send path-connected component to path-connected component, we can define the following map.
 
 \begin{defin}\label{def:hT}
  Let $h: \Omega \lra \Omega'$ be a homeomorphism. Define for all $T \in \Omega$ a function $h_T : \R^d \ra \R^d$ by
  \[
   h(T - x) = h(T) - h_T(x).
  \]
  It is well-defined, by aperiodicity of the tilings with which we are dealing.
 \end{defin}
 
 \begin{thm}\label{thm:hT}
  Given a homeomorphism $h : \Omega \lra \Omega'$ between two aperiodic, minimal, FLC tiling spaces, the function
  \begin{displaymath}
    \begin{array}{ccl}
     \Omega \times \R^d & \lra & \R^d \\
     (T,x) & \longmapsto & h_T(x)
    \end{array}
  \end{displaymath}
  is continuous in two variables. Furthermore, for any fixed $T$, $h_T$ is a homeomorphism, and
  \[
   \big( h_T \big)^{-1} = \big(h^{-1}\big)_{h(T)}.
  \]
 \end{thm}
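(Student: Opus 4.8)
The plan is to dispatch the algebraic assertions first, since they follow formally from aperiodicity, and then to concentrate on the analytic heart of the statement, namely joint continuity. First I would record the \emph{cocycle identity}: writing $h\big((T-x)-y\big)$ in two ways, as $h(T-x)-h_{T-x}(y)=h(T)-\big(h_T(x)+h_{T-x}(y)\big)$ and as $h\big(T-(x+y)\big)=h(T)-h_T(x+y)$, and using that the $\R^d$-action is free on aperiodic tilings, one obtains
\[
 h_T(x+y) = h_T(x) + h_{T-x}(y)
\]
for all $T,x,y$; in particular $h_T(0)=0$. For the inverse formula, apply $h^{-1}$ to the defining relation $h(T-x)=h(T)-h_T(x)$ and invoke the definition of $(h^{-1})_{h(T)}$ at the point $S=h(T)$ with displacement $h_T(x)$: this rewrites $T-x$ as $T-(h^{-1})_{h(T)}\big(h_T(x)\big)$, so freeness forces $(h^{-1})_{h(T)}\big(h_T(x)\big)=x$. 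Running the identical computation with the roles of $h$ and $h^{-1}$ exchanged gives $h_T\big((h^{-1})_{h(T)}(y)\big)=y$. Hence $h_T$ is a bijection with $(h_T)^{-1}=(h^{-1})_{h(T)}$; it only remains to prove continuity of the two-variable map, for then both $h_T$ and its inverse are continuous and $h_T$ is a homeomorphism.

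The core step is continuity of $(T,x)\mapsto h_T(x)$ for $x$ near $0$, uniformly in $T$, and here I would use the solenoid structure of $\Omega'$ from Theorem~\ref{thm:box}. Cover $\Omega'$ by finitely many chart boxes $V\simeq B(0,\eps_V)\times X_V$ and let $\delta$ be a Lebesgue number of this cover. The essential geometric fact, coming from the commutation of the chart maps with the $\R^d$-action in Theorem~\ref{thm:box}, is that inside one box the path-components of an orbit are the horizontal slices $B(0,\eps_V)\times\{\xi\}$; on such a slice the vector joining two points is exactly the difference of their $B$-coordinates, a continuous function of the chart data. By uniform continuity of $(T,w)\mapsto h(T-w)$ on the compact set $\Omega\times\overline{B(0,1)}$, there is $\eta>0$ such that $\nr{w}\leq\eta$ implies $D'\big(h(T-w),h(T)\big)<\delta$ for \emph{all} $T$. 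For $\nr{x}\leq\eta$ the continuous path $t\mapsto h(T-tx)$, $t\in[0,1]$, then stays within $\delta$ of $h(T)$, hence inside one box and one horizontal slice; therefore $h(T)$ and $h(T-x)$ share a Cantor coordinate, and $h_T(x)$ equals the difference of their $B$-coordinates. Since $h(T)$ and $h(T-x)$ depend continuously on $(T,x)$, this exhibits $h_T(x)$ as a continuous function of $(T,x)$ on the slab $\{\nr{x}\leq\eta\}$.

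Finally I would bootstrap from small to arbitrary $x$ using the cocycle identity. Fixing a target point $(T_0,x_0)$, choose $n$ with $\nr{x_0}/n<\eta/2$; then on a neighbourhood of $(T_0,x_0)$ one has $\nr{x}/n<\eta$ and
\[
 h_T(x) = \sum_{i=0}^{n-1} h_{T - i x/n}\big(x/n\big).
\]
Each summand is the composition of the continuous map $(T,x)\mapsto\big(T-ix/n,\ x/n\big)$ with the locally continuous function of the previous paragraph (whose second argument has norm $<\eta$), hence is jointly continuous near $(T_0,x_0)$; a finite sum of continuous functions is continuous, which yields joint continuity at $(T_0,x_0)$ and hence everywhere. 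Combined with the first paragraph, this proves the theorem.

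The hard part will be the local step: one must extract from compactness a \emph{single} radius $\eta$ valid for all base tilings simultaneously, and one must justify, via the commutation of chart maps with translation in Theorem~\ref{thm:box}, that two nearby orbit points in a box lie on the same horizontal slice, so that the connecting vector can be read off continuously from the chart coordinates. The passage to global $x$ is then a routine telescoping through the cocycle identity, and the bijectivity and inverse formula are purely formal consequences of aperiodicity.
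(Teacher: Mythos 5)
Your proof is correct, but it organizes the argument differently from the paper, and the difference is worth noting. The paper first proves continuity of $x\mapsto h_T(x)$ for a \emph{fixed} $T$ (by a sequential argument: a path through the points $x_n$ is pushed forward by $h$, trapped in a single chart box of the solenoid structure of Theorem~\ref{thm:box}, hence in a single horizontal slice, so that the displacement can be read off in coordinates), and then obtains joint continuity by a separate contradiction argument: assuming $\nr{h_T(x)-h_{T_0}(x_0)}\geq\eps$, it applies the intermediate value theorem along the segments $\lambda\mapsto\lambda x$, $\lambda\mapsto\lambda x_0$ to locate a parameter where the discrepancy is \emph{exactly} $\eps$, and derives a contradiction with the uniform continuity of $h$ and the fact that $\eps$ is small compared to the tile size. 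You instead prove a single uniform local statement --- joint continuity of $(T,x)\mapsto h_T(x)$ on a slab $\nr{x}\leq\eta$ with $\eta$ independent of $T$, extracted from compactness of $\Omega\times\overline{B(0,1)}$ and a Lebesgue number for a finite cover by chart boxes --- and then telescope to arbitrary $x$ via the cocycle identity $h_T(x+y)=h_T(x)+h_{T-x}(y)$ (which the paper only records in a remark \emph{after} the theorem, as a consequence rather than a tool). Both arguments rest on the same two pillars, namely the commutation of chart maps with the translation action and the total disconnectedness of the Cantor factor; your route avoids the paper's IVT contradiction and is arguably cleaner, at the cost of needing the cocycle identity up front. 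Two small points to tighten: when you read $h_T(x)$ off as a difference of $B$-coordinates, make sure a \emph{single} chart box works for all $(T,x)$ in a neighbourhood of the target point (e.g.\ take $\eta$ corresponding to $\delta/2$ and restrict to $T$ with $D(h(T),h(T_0))<\delta/2$, so the whole path lies in the box containing $B(h(T_0),\delta)$); and note that the metric controlling $h(T-w)$ against $h(T)$ is the hull metric $D$, not the transversal metric $D'$. Your treatment of bijectivity and the inverse formula via freeness of the action is the same formal argument the paper leaves implicit.
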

 
 \begin{proof}
  The first step of this proof is to show that for any given $T \in \Omega$, the map $x \mapsto h_T(x)$ is continuous.
  The map $h_T$ is defined as the bijection which makes the following diagram commute:
  \[
   \xymatrix{\ar @{} [dr] |{\circlearrowleft}
     \R^d \ar[d] \ar[r]^{h_T}   & \R^d  \ar[d] \\
     \orb(T) \ar[r]^-{h}        & \orb(h(T))
   }
  \]
  where $\orb(T)$ is the orbit of $T$ in $\Omega$ under the action of $\R^d$.
  The difficult point is that the action maps $\R^d$ continuously and bijectively onto $\orb(T)$ via $x \mapsto T-x$; however, for the relative topology induced by that of $\Omega$, $\orb(T)$ is not homeomorphic to $\R^d$, and not even locally homeomorphic. Indeed, the image of an open set is not open (in general) for the induced topology, and $\orb(T)$ is not locally connected.
  However, Theorem~\ref{thm:box} gives information about the local structure of the leaves, which allows to prove that $h_T$ is a homeomorphism.
  
  Here, the sequential characterization of continuity is used: $T$ and $x$ being fixed, let $(x_n)_{n \in \N}$ be a sequence converging to $x$.
  Let $\gamma: [0,1] \ra \R^d$ be the continuous path from $x_1$ to $x$, such that ${\gamma}$ restricted to ${[1-2^{-n}, 1-2^{-(n+1)}]}$ is the constant speed parametrization of the segment $[x_{n+1},x_{n+2}]$.
  To prove continuity of $h_T$ at $x$, it is enough to prove that $h_T \circ \gamma$ tends to $h_T (x)$ as $t$ tends to~$1$.
 
  Since $h$, $\gamma$ and of the action of $\R^d$ on $\Omega$ are continuous, $h(T - \gamma(t))$ is close to $h(T-x)$ provided $t$ is close to~$1$. Assume $t_0$ is close enough to $1$ so that for all $t > t_0$, $h(T - \gamma(t))$ is in a chart box of the form $V \simeq_\phi B(0,\eps) \times X$ around $h(T-x)$, and such that $\phi(h(T-x)) = (0,\xi)$.
  
  For $t > t_0$, the path $t \mapsto \phi \circ h (T - \gamma (t))$ is a continuous path in $B(0,\eps) \times X$, and therefore must be included in a path-connected component of the form $B(0,\eps) \times \{\xi'\}$, and therefore be of the form $(\eta(t), \xi')$. Since $\phi \circ h (T-\gamma(t))$ needs to tend to $(\xi,0)$, then $\xi'=\xi$.
  
  Finally,
  \begin{align*}
   \phi \circ h (T - \gamma (t)) & = \phi ( h(T) - h_T \circ \gamma (t)) \subset B(0,\eps) \times \{\xi\}, \\
               & = (\eta(t), \xi).
  \end{align*}
  Now, $\eta(t)$ tends to zero on the one hand, and is equal to $h_T(\gamma(t)) - h_T(x)$ on the other hand (because $\phi$ commutes with the action of $\R^d$ whenever it is defined, see Theorem~\ref{thm:box}). Therefore, $h_T(\gamma(t))$ tends to $h_T(x)$, and $h_T$ is continuous.

  The second part of the proof is to show that the function is continuous of two variables.
  Let $(T_0,x_0) \in \Omega \times \R^d$.
  What needs to be shown is
  \begin{multline*}
    \forall \eps > 0 \ ;\  \exists \delta > 0 \ ; \ \forall  (T,x) \in \Omega \times \R^d, \\
    \Big( \nr{x-x_0} < \delta \text{ and } D(T,T_0) < \delta \Big)  \Rightarrow \nr{h_T(x) - h_{T_0}(x_0)} < \eps.
  \end{multline*}

  Let $\eps > 0$.
  Let $\delta_0$ be such that whenever $T_1,T_2$ are $\delta_0$-close, then their images by $h$ are $\eps / 3$-close (using uniform continuity of $h$).
  Finally, let
  \[
  \delta = \min\big\{ (\delta_0^{-1} + \nr{x_0} + 2\delta_0)^{-1}, \delta_0/2 \big\}.
  \]
  Let now $T \in \Omega$ in a $\delta$-neighbourhood of  $T_0$. Then, up two small translation of combined size less than $2 \delta < \delta_0$, the tilings $T$ and $T_0$ agree on $B(0,\delta_0^{-1} + \nr{x_0} + \delta_0)$.
  Then, for all $y_0 \in \overline{B(0,\nr{x})}$ and all $y \in \R^d$ with $\nr{y-y_0} < \delta \leq \delta_0/2$, the tilings $T-y$ and $T-y_0$ agree at least on $B(0,\delta_0)$ up to a small (less than $\delta$) translation.
  This last statement holds in particular for $y= \lambda x$ ($\lambda \in [0,1]$).
 
  Let $x \in B(x_0,\delta)$, and $T \in \Omega$ be $\delta$-close from $T_0$.
  Then, for all $\lambda \in [0,1]$, $h(T - \lambda x)$ and $h(T_0 - \lambda x_0)$ are $\eps / 3$-close.
  Assume (by contradiction), that $\nr{h_T (x) - h_{T_0} (x_0)} \geq \eps$.
  Since $h_T$ and $h_{T_0}$ are continuous, and $\nr{h_T(0) - h_{T_0} (0)} = 0$, one can use the intermediate value theorem: there exists a $\lambda_0 \in [0,1]$ such that $\nr{h_T(\lambda_0 x) - h_{T_0} (\lambda_0 x_0)} = \eps$.
  One has
  \begin{align*}
   h(T - \lambda_0 x) & = h(T) - h_T(\lambda_0 x)  \\
   h(T_0 - \lambda_0 x_0) & = h(T_0) - h_{T_0} (\lambda_0 x_0).
  \end{align*}
  On the one hand, $h(T)$ and $h(T_0)$ match around the origin up to a translation smaller than~$\eps/3$.
  On the other hand, so do the left-hand sides of the equations above.
  Besides, $h_T (\lambda_0 x)$ and $h_{T_0} (\lambda_0 x_0)$ differ by exactly $\eps$, which $\eps$ is much smaller than the radius of a tile. It is a contradiction.
 \end{proof}
 
 \begin{rem}
  The map $(T,x) \mapsto h_T(x)$ is actually a groupoid morphism (where the additive group $\R^d$ is seen as a groupoid with one basepoint).
  It results from a direct check:
  \[
   h_T(x+y) = h_T(x) + h_{T-x}(y).
  \]
  This relation can also be called a \emph{cocycle condition} for a reason which will be addressed in the last section of this paper.
 \end{rem}

 \begin{rem}
  As a consequence of this result, the following map
  \[
   \begin{array}{ccl}
      \Omega \rtimes \R^d & \lra & \Omega' \rtimes \R^d \\
       (T,x) & \longmapsto & (h(T),h_T(x))
   \end{array}
  \]
  is an isomorphism of topological groupoids. Therefore, by a theorem of Renault~\cite{Ren80}, the associated cross product $C^*$-algebras $C(\Omega) \rtimes \R^d$ and $C(\Omega') \rtimes \R^d$ are isomorphic.
  It appears that much of the dynamics is constrained by the topology of the tiling spaces, because of the local product structure.
  If the topological spaces are homeomorphic, the homeomorphism is automatically an orbit equivalence, and the groupoids and associated $C^*$-algebras---which encode the dynamics---are isomorphic.
 \end{rem}

 \subsection{Strategy for the proof}
 
 Frank and Sadun~\cite{FS12} have proved that whenever two tiling spaces are topologically conjugate, their complexity functions are equivalent.
 Their result involves a different complexity function as the one which is used here, as it can measure complexity of tiling spaces without FLC.
 Here is how their result would read in the framework of the present paper.
 
 \begin{thm}\label{thm:frank-sadun}
  Let $h: \Omega \lra \Omega'$ be a topological conjugacy between two tiling spaces (\ie a homeomorphism which commutes with the actions of $\R^d$ by translation).
  We assume that the tiling spaces are aperiodic, minimal, with FLC.
  Then, there are two canonical transversals $\Xi \subset \Omega$ and $\Xi' \subset \Omega'$, such that
  the associated complexity function $p$ and $p'$ satisfy
  \[
   m p(r-a) \leq p'(r) \leq M p(r+b).
  \]
  for some constants $a$, $b$, $m$, $M$, and all $r$ big enough.
 \end{thm}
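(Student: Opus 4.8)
The plan is to exploit the single feature that distinguishes a conjugacy from a mere homeomorphism: since $h$ commutes with the $\R^d$-actions one has $h(T-x) = h(T)-x$ for all $T$ and $x$, so the cocycle of Theorem~\ref{thm:hT} is trivial ($h_T = \id$) and $h$ is rigid along orbits. The whole content of the statement is then a \emph{locality} estimate, namely that $T$ restricted to a ball determines $h(T)$ on a slightly smaller ball. I would first prove the inequalities for the \emph{unpointed} complexity (patches $T\cap B(0,r)$ counted up to translation), in the sharp form $p'(r) \le p(r+R_0)$ and, symmetrically for $h^{-1}$, $p(r) \le p'(r+R_0')$; rearranging gives the double inequality with multiplicative constants equal to~$1$. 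The passage to the \emph{pointed} transversal complexities $p_\Xi$, $p_{\Xi'}$ is then handled by the equivalence between pointed and unpointed complexity and the independence of the transversal (Proposition~\ref{prop:complex-indep-xi} and the remark following it); these conversions are exactly what produce the multiplicative constants $m$, $M$ and the shifts $a$, $b$ in the stated form. Because $h^{-1}$ is a conjugacy of the same kind, it suffices to carry out one direction, say $p'(r)\le p(r+R_0)$.

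For the locality estimate I would argue in two stages. By compactness $h$ is uniformly continuous, which (choosing the target closeness $\eps_0$ smaller than the inradius of the tiles) yields fixed radii $r_0, R_0 > 0$ such that exact agreement of $T_1$ and $T_2$ on $B(0,R_0)$ forces $h(T_1)$ and $h(T_2)$ to agree on $B(0,r_0)$ \emph{up to} a translation of norm at most $\eps_0$. The essential step is to upgrade this single-scale statement to an \emph{additive} one via translation-equivariance: given $T_1 = T_2$ on $B(0,r+R_0)$, cover $B(0,r)$ by the balls $B(x,r_0)$ with $x\in B(0,r)$ and apply the single-scale estimate to $T_1 - x$ and $T_2 - x$, which agree on $B(0,R_0)$ since $B(x,R_0)\subset B(0,r+R_0)$. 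As $h(T_i-x) = h(T_i)-x$, this shows that $h(T_1)$ and $h(T_2)$ agree \emph{locally everywhere} on $B(0,r)$, each local agreement holding up to a translation of norm $\le \eps_0$; crucially the radius lost is the constant $R_0$, independent of $r$.

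The main obstacle, and the point where finite local complexity and aperiodicity enter, is to glue these \emph{a priori} position-dependent small translations into a single global one, so that the agreement can be recorded by the translation-equivalence class of a patch. On the overlap of two neighbouring balls the two local translations differ by a vector $s$ with $\nr{s}\le 2\eps_0$ which fixes a genuine patch of $h(T_2)$; but a nonzero translation smaller than the inradius cannot map a patch of tiles to itself without forcing two distinct tiles to overlap in their interiors, so $s=0$, and by connectedness of $B(0,r)$ the local translations coincide. Hence $h(T_1)$ and $h(T_2)$ agree on $B(0,r)$ up to one translation of norm $\le \eps_0$. This makes the assignment $T\cap B(0,r+R_0) \longmapsto h(T)\cap B(0,r)$ well defined between patches considered up to translation, and minimality of $\Omega'$ (every radius-$r$ patch occurs as a central patch of some $h(T)$) shows that it is onto; counting then gives $p'(r)\le p(r+R_0)$. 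Applying the same argument to $h^{-1}$ and converting to transversal complexities as above completes the proof. I expect the gluing argument of this last paragraph, together with the reconciliation of the metric's intrinsic ``up to a small translation'' slack with the exact patch-counting in the definition of $p_\Xi$ (resolved precisely by passing through the unpointed complexity), to be the delicate part.
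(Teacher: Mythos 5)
Your proposal follows essentially the same route as the paper's proof: uniform continuity of $h$ gives a single-scale locality estimate, equivariance ($h_T=\id$) propagates it over a ball of radius $r$ with only a constant additive loss, and the reverse inequality comes from running the argument for $h^{-1}$. The one genuine difference is in the bookkeeping: the paper's (explicitly sketched) proof sidesteps the ``agreement only up to a small translation'' issue by \emph{assuming} that $h$ maps a canonical transversal to a canonical transversal, so that closeness means exact agreement of patches; you instead work with unpointed complexity and glue the position-dependent small translations into a single one via the inradius/no-overlap argument, then convert to transversal complexities afterwards. Your version thus actually supplies the detail the paper waves away with its simplifying assumption, at the cost of routing through the pointed/unpointed equivalence; both yield the stated inequalities, and your gluing step is correct as written.
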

 
 Note that by Proposition~\ref{prop:complex-indep-xi}, it is not important which transversals $\Xi$ and $\Xi'$ are chosen, as any complexity functions would then satisfy this inequality.
 
 We include a sketch of the proof, with a simplifying assumption that $h$ sends a vertical transversal to a vertical transversal.
 Even in this simplified setting, the proof contains important ideas which will be used in the proof of Theorem~\ref{thm:main}.
 It is pointed out afterwards how these ideas are to be generalised.

 \begin{proof}
  Let $\Xi \subset \Omega$ be any canonical transversal. It is assumed that $\Xi' := h(\Xi)$ is a canonical transversal itself (this is a simplifying assumption, as it shouldn't be expected in general).
 
  The goal is then to compare the functions $p$ and $p'$ associated with $\Xi$ and $\Xi'$ respectively.
  Let $\eps < 2^{-1/2}$. By uniform continuity of $h$, there is a $\delta < 2^{-1/2}$ such that for all $T,T' \in \Xi$, if $T$ and $T'$ are at least $\delta$-close, then $h(T)$ and $h(T')$ are at least $\eps$-close.
  In terms of patches, it means that if $T$ and $T'$ agree on $B(0,\delta^{-1})$, then their images agree on $B(0,\eps^{-1})$. In particular, $h(T)$ and $h(T')$ have the same local configuration at the origin.
  
  Now, $\eps$ and $\delta$ being fixed, let $r > 0$, big enough so that it is big compared to $\eps^{-1}$ and $\delta^{-1}$.
  Assume $T$ and $T'$ agree on a ball of radius $r+ \eps^{-1}$. Then it means that for all $x \in \R^d$ with $\nr{x} \leq r$, $T-x$ and $T'-x$ agree on $B(0,\delta^{-1})$.
  Using the fact that $h$ is a conjugacy, it means that for all such $x$, $h(T)-x$ and $h(T')-x$ agree on $B(0,\eps^{-1})$. Therefore, $T$ and $T'$ agree up to radius $r+\delta^{-1}$ implies that their images agree up to radius $r+\eps^{-1}$.
  Therefore,
  \[
    p(r+\delta^{-1}) \geq p'(r+\eps^{-1}).
  \]
  An exchange of $h$ and $h^{-1}$ provides the other inequality.
 \end{proof}
 
 This proof has three essential ingredients:
 \begin{enumerate}
  \item The image of a transversal is a transversal (simplifying assumption);
  \item The map $h$ is uniformly continuous, and it allows to control how two tilings agree on a small neighbourhood of the origin when their preimages do;
  \item The conjugacy sends orbits to orbits in a trivial way (given $T$, the map $h_T$ is the identity map), so that patches of tilings in $\Omega$ can be compared to patches of tilings in $\Omega'$.
 \end{enumerate}
 
 In the case when $h$ is just a homeomorphism, point (2) is still satisfied. Point (1) is not satisfied, however it is very close to being satisfied: for all $\eps > 0$, there are transversals $\Xi \subset \Omega$ and $\Xi' \subset \Omega'$ such that $h(\Xi)$ and $\Xi'$ are within distance~$\eps$.
 This important approximation lemma is proved in the next section.
 
 The main difficulty is point (3). Given a ball of radius $r$ in an orbit, what is the radius of its image by $h$? We need to control how $h_T (B(0,r))$ grows with~$r$.
 Lemma~\ref{lem:quasi-lipschitz} and its corollary show that $h_T$ is not far from being Lipschitz.

\section{An approximation lemma}

The following key lemma addresses one of the points outlined in the strategy of the proof: a homeomorphism between two tiling spaces is very close to sending a given canonical transversal to a canonical transversal.
It is an interesting result in its own right.
It should be compared with results obtained by Rand and Sadun~\cite{RS09}: in their paper, it is proved that any continuous map between FLC aperiodic repetitive tiling spaces can be approximated arbitrarily well by a \emph{local} map---a local map has the particularity that it sends \emph{any} canonical transversal to a canonical transversal.
However, the approximating map in their case is not guaranteed to be a homeomorphism when the original map is.
Note also that the map $g$ in the proof of Lemma~\ref{lem:quasi-lipschitz} is local in the sense of Rand--Sadun, but is \emph{a priori} not within a small distance of $h$, and is not guaranteed to be a homeomorphism either.

\begin{lem}\label{lem:redressement}
 Let $h: \Omega \lra \Omega'$ be a homeomorphism. Then for all $\eps > 0$, there is a vertical transversal $\Xi \subset \Omega$, a vertical transversal $\Xi' \subset \Omega'$, and a homeomorphism $h_\eps: \Omega \lra \Omega'$ such that
 \begin{enumerate}
   \item $h_\eps(\Xi) = \Xi'$;
   \item $h$ and $h_\eps$ are isotopic;
   \item there is a continuous function $s:\Omega \ra \R^d$, the norm of which is uniformly bounded by $\eps$ and such that $h_\eps = h-s$.
 \end{enumerate}
\end{lem}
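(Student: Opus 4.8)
The plan is to exploit the flat-solenoid structure of $\Omega'$ (Theorem~\ref{thm:box}) and to straighten the image $K := h(\Xi)$ of a vertical transversal onto a genuinely vertical one, by subtracting a small, leafwise-smooth translation field. First I would fix any vertical transversal $\Xi \subset \Omega$, with pointing set of uniform-discreteness constant $m$, and study the compact set $K = h(\Xi)$. The first step is to show that $K$ is \emph{locally a graph} over the vertical slices of $\Omega'$: around each $S_0 = h(T_0) \in K$, in a flat chart $V' \simeq_\psi B(0,\rho) \times X'$ with $\rho < \eps$, the set $K$ has the form $\{\psi^{-1}(\beta(\eta),\eta) \tq \eta \in W\}$ for a clopen $W \subset X'$ and a continuous horizontal displacement $\beta: W \to B(0,\rho)$ with $\nr{\beta} < \eps$. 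The nontrivial point is that two distinct transversal points cannot be sent by $h$ onto the same small local leaf. This I would deduce from Theorem~\ref{thm:hT}: since $(T,x)\mapsto h_T(x)$ is continuous in two variables, $h_T(0)=0$, and $(h_T)^{-1}=(h^{-1})_{h(T)}$, uniform continuity of the inverse cocycle over the compact space $\Omega'$ yields a $\mu>0$ such that $\nr{h_T(x)} \leq \mu \Rightarrow \nr{x} < m$ for all $T$; choosing charts with $\rho < \mu/2$ then forces injectivity on the transverse coordinate (if two points of $K$ differ by a horizontal $v$ with $\nr{v}<2\rho<\mu$, their $h$-preimages differ by a return vector $w$ of $\Xi$ with $v = h_{T_0}(w)$, whence $w=0$).

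The heart of the argument is to glue these local displacements into a single continuous field. On an overlap, the solenoid property~\eqref{eq:solenoid} changes the horizontal coordinate only by a constant $t_{ij}$, so the local displacements satisfy $\beta_i - \beta_j = t_{ij}$ on $K$; flattening $K$ onto a vertical slice in chart $i$ leaves it, in chart $j$, at a constant horizontal coordinate, hence still vertical. Thus the local straightenings are mutually compatible precisely when the \v{C}ech $1$-cocycle $(t_{ij})$, valued in the locally constant sheaf $\R^d$, is a coboundary $c_i - c_j = t_{ij}$ --- and this coboundary equation is exactly the condition describing a vertical transversal. Since canonical (vertical) transversals exist on $\Omega'$, a solution $(c_i)$ exists; choosing one amounts to choosing the target $\Xi'$, and it makes $\sigma|_K := \beta_i - c_i$ a well-defined continuous map $K \to \R^d$ of norm $< \eps$. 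I would then extend $\sigma$ to all of $\Omega'$ by a leafwise bump supported in a tubular (flow-box) neighbourhood of $K$, keeping it leafwise-smooth and of norm $< \eps$.

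Finally I would set $\tau(S) = S - \sigma(S)$, $h_\eps = \tau \circ h$, and $s = \sigma \circ h$. In each chart $\tau$ reads $(a,\eta) \mapsto (a + \tilde\sigma(a,\eta), \eta)$: it preserves the transverse (Cantor) coordinate and maps each $\R^d$-leaf to itself; taking $\sigma$ leafwise-Lipschitz with small constant makes each leafwise map an injective small perturbation of the identity that is the identity off the tube, so $\tau$ is a continuous bijection of the compact space $\Omega'$, hence a homeomorphism. Therefore $h_\eps$ is a homeomorphism and $\Xi' := h_\eps(\Xi) = \tau(K)$ is vertical by construction, giving~(1) and~(3). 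For the isotopy~(2) I would take $h_t := h - t s$, $t \in [0,1]$; the same leafwise estimate shows each $h_t$ is a homeomorphism, so $t \mapsto h_t$ is the required isotopy from $h$ to $h_\eps$. I expect the main obstacle to be exactly this global gluing step: reconciling the chart-dependent, solenoid-offset straightenings into one continuous field that is simultaneously small and an honest homeomorphism. The two observations that unlock it are that the existence of canonical transversals forces the obstruction cocycle $(t_{ij})$ to be a coboundary, and that working leafwise-smoothly guarantees the perturbed map remains a homeomorphism.
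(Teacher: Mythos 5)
Your overall architecture is viable and genuinely different from the paper's: you straighten the \emph{entire} image $K=h(\Xi)$ of a full transversal by a globally defined, target-side perturbation $\tau = \id - \sigma$, whereas the paper never leaves a single flow box. It fixes one chart $U\subset\Omega$ with $h(U)$ inside a chart $V\subset\Omega'$ of diameter $<\eps$, takes $\Xi$ and $\Xi'$ to be the (small) vertical transversals of those two boxes, and modifies $h$ only on $U$ by an explicit leafwise precomposition $f_\xi(y)=y+a(y)x_\xi$ that fixes $\partial B(0,\delta')$, extending by $h$ itself elsewhere. Since the lemma only asks for \emph{some} vertical transversal, this local choice makes the whole global gluing problem disappear. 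Your local-graph step (injectivity of the transverse projection of $K$ via uniform continuity of the inverse cocycle $(h^{-1})_S$ and uniform discreteness of the pointing set) is correct and is a nice use of Theorem~\ref{thm:hT}, and your target-side formulation delivers condition (3) of the lemma more directly than the paper's source-side one (where $s(T)=h_T(v_T)$ has to be read off afterwards).

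The genuine gap is the gluing step. You assert that the obstruction cocycle $(t_{ij})$ is a coboundary ``since canonical (vertical) transversals exist on $\Omega'$,'' and take $\sigma|_K=\beta_i-c_i$. But a pre-existing vertical transversal only furnishes a primitive $(c_i)$ over the charts it happens to meet, and, more importantly, gives no control whatsoever on $\nr{\beta_i-c_i}$: the resulting $\sigma$ need not be bounded by $\eps$, which simultaneously kills condition (3) and the leafwise Lipschitz-$<1$ estimate you rely on for $\tau$ to be injective. What you actually need is a \emph{locally constant} family $(\gamma_i)$ with $\gamma_i-\gamma_j=t_{ij}$ \emph{and} $\nr{\beta_i-\gamma_i}<\eps$; the correct (and easy) route is not cohomological existence but the Cantor-set refinement trick: since $K$ is compact and totally disconnected, refine the chart cover of $K$ to a finite partition into clopen pieces, each contained in one chart and small enough that $\beta_i$ varies by less than $\eps/2$ on it, and straighten each piece to a constant horizontal coordinate chosen as a value of $\beta_i$ on that piece. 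With that replacement, plus a check that the bump extension of $\sigma$ has leafwise Lipschitz constant $\sup\nr{\sigma|_K}/\rho_{\mathrm{tube}}<1$ (so the tube radius must dominate $\eps$, exactly the role of $\delta'$ versus $\nr{x_\xi}$ in the paper) and a degree argument for leafwise surjectivity, your proof goes through.
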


\begin{proof}
 Let $\eps > 0$, which is assumed to be small compared to the inner radius of the tiles. By uniform continuity of $h$, there is a $\delta > 0$ such that $D(T,T') \leq \delta$ implies $D(h(T),h(T')) \leq \eps$.
 Then, there is a chart box $V_0 \subset \Omega'$, of diameter less than $\eps$. Let $U_0$ be a chart box in $\Omega$ such that $U_0 \subset h^{-1}(V_0)$. Then we have the following picture for some $\delta' \leq \delta$ and $\eps' \leq \eps$.
 \[
   \xymatrix{
   U_0 \ar[rr]^h \ar [d]_\simeq   && V_0 \ar[d]^\simeq \\
   B(0,\delta') \times X_0 \ar [rr]^\phi && B(0,\eps') \times X'_0
   }
 \]
 where $\phi$ is induced by $h$ (so that the diagram commutes). See also Figure~\ref{fig:vois-cstrct}.
 When no confusion can arise, $\phi$ will not be used, and this map will just be noted $h$, possibly with the precision ``in coordinates''.
 \begin{figure}[ht]
 \begin{center}
  \includegraphics{./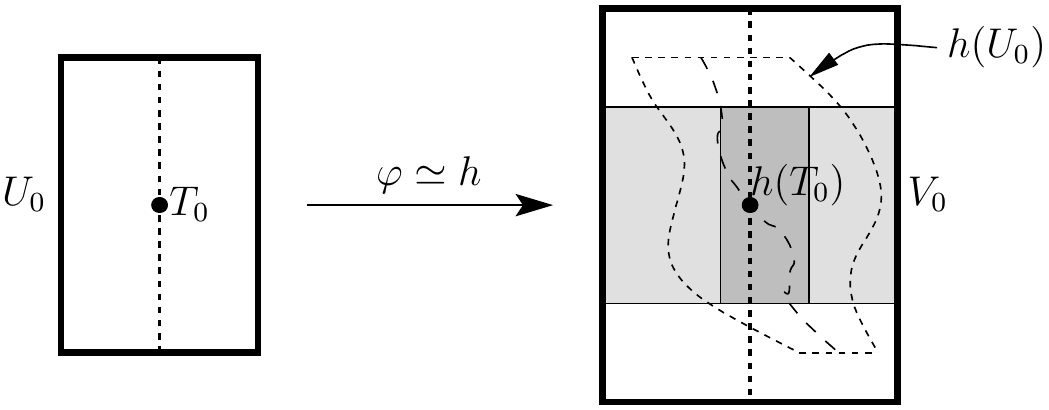}
 \end{center}
 \caption{$U_0$ and $V_0$, as initially built. They need pruning: for example $V_0$ is too tall.
    First, pick a box-shaped neighbourhood of $h(T_0)$ inside $h(U_0)$ (dark-grey neighbourhood~$V'$). Then, widen it (light-grey neighbourhood~$V$). Finally, trim $U_0$ so that its image is now included in the light-grey neighbourhood.}
 \label{fig:vois-cstrct}
\end{figure}

 With a good choice of charts, we can assume that $\phi (0,\xi) = (0,\xi')$ for some $\xi \in X_0$ and $\xi' \in X'_0$.
 Call $T_0$ and $T'_0$ the tilings corresponding to $(0,\xi)$ and $(0,\xi')$ respectively via the chart maps (so that $h(T_0) = T'_0$).
 Note that $h(U_0)$ is open in $V_0$. Therefore, there is a neighbourhood of $T'_0$, say $V' \simeq  B(0,\eps'_1) \times X'$ such that $V' \subset h(U_0)$.
 Let now $V \simeq B(0,\eps') \times X'$, and $U$ be the largest open subset of $U_0$ such that $h (U) \subset V$. It is easy to check that it is of the form $B(0,\delta') \times X$, with $X$ a clopen subset of~$X_0$.
 
 Let $\Xi$ and $\Xi'$ be the vertical transversals included in $U$ and $V$ respectively, such that $\Xi \simeq \{0\} \times X$ and $\Xi' \simeq \{0\} \times X'$.

We now have $U$ and $V$ two chart domains with $h(U) \subset V$, and two transversals $\Xi \subset U$ and $\Xi' \subset V$ such that $\Xi' \subset h(U)$.
\begin{figure}[ht]
 \begin{center}
  \includegraphics{./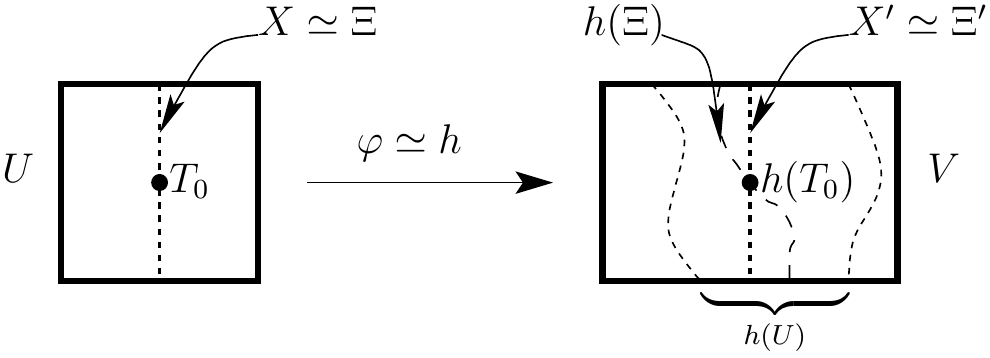}
 \end{center}
 \caption{The construction of $\Xi$ and $\Xi'$. The diameter of $U$ is at most $\delta$ and the diameter of $V$ is at most $\eps$.}
 \label{fig:voisinages}
\end{figure}

 The next step in the proof is to build a map $\Xi \rightarrow \Xi'$ which is induced by~$h$. The definition itself is straightforward: for each $T \in \Xi$, there is a unique $T' \in \Xi'$ which belongs to the same connected component of $V$ as $h(T)$.
 Define $h_\eps (T)$ to be this $T'$.
 It is clear that for all $T \in \Xi$, $h_\eps (T) = h(T) - s(T)$, where $s(T)$ is a translation vector of norm less than $\eps$ (since the diameter of $V$ is less than $\eps$).
 Two things need to be shown: first, that $h_\eps$ is indeed continuous; then, that it can be extended to a function on all $\Omega$ and not just on $\Xi$.

 To this end, first prove that $s$ is a continuous function on~$\Xi$. Let $\eta > 0$, which is assumed to be small with respect to the size of the tiles, and $T_1 \in \Xi$.
 By continuity of $h$, there is $\nu > 0$ such that for all $T \in \Xi$ which is $\nu$-close to $T_1$, the images by $h$ of $T$ and $T_1$ are $\eta/2$-close.
 It implies that there is an $x \in \R^d$, with $\nr{x} < \eta$, such that $h(T_0)$ and $h(T) - x$ match exactly around the origin.
 In addition, since $h_\eps (T) = h(T) - s(T)$ and $h_\eps (T_0) = h(T_0) - s(T_0)$ both belong to the same canonical transversal $\Xi'$, they also need to match exactly around the origin. % (because $\eps$ is small).
 Therefore, \emph{when restricted to $B(0,\eps^{-1})$}, one has (the notation $=_r$ stands for ``agree around the origin up to radius~$r$''):
 \begin{align*}
  h(T_0) - s(T_0) & =_{1/\eps} h(T) - s(T) \quad \text{so} \\
  h(T_0) - s(T_0) & =_{1/\eps} h(T) - (x + s(T_0)),
 \end{align*}
 and since $\nr{x}$ and $s(T_0)$ are both smaller than $\eps$ (which is small compared to the size of tiles), the only way that the last two tilings can match around the origin is if $s(T) = s(T_0) + x$, which implies $\nr{s(T_0) - s(T)} < \eta$.
 Since $\eta$ could have been chosen arbitrarily small, it proves that $s$ is continuous on $\Xi$.

 This last fact provides an instant proof of the continuity of $h_\eps$ on $\Xi$: indeed, $h_\eps = h - s$, both $h$ and $s$ are continuous, and $\R^d$ acts continuously on $\Omega$ by translations.

 Let us now show that $h_\eps$ can be extended on $\Omega$. It is done by extending $s$.
 First, $\phi$ (induced by $h$) extends to a map defined on the closure of its domain (by continuity of $h$ on $\Omega$). It is still noted $\phi$:
 \[
  \phi : \overline{B(0,\delta')} \times X  \lra  \overline{B(0,\eps')} \times X'
 \]
 Then, for all $\xi \in X$, there is a map $\phi_\xi: x \mapsto \phi(x,\xi)$ defined on the closed ball of radius $\delta'$, which is a homeomorphism onto its image.
 By construction, its image contains $0$ in its interior.
 Let $x_\xi := \phi_\xi^{-1} (0)$. It is an inner point of the closed disk. Then, let us explicitly provide a homeomorphism $f_\xi$ from the closed disk to itself, which leaves the boundary invariant and sends $0$ to $x_\xi$.

 Let $a(y) = 1-\nr{y}/\delta'$ defined on $\bar B (0,\delta')$. Notice that it is $0$ on the boundary, and takes value $1$ at $0$.
 Now, define
 \begin{equation}\label{eq:f-xi}
  f_\xi (y) = y + a(y) x_\xi.
 \end{equation}
 It is clear that $f_\xi (0) = x_\xi$, and $f_\xi$ leaves the boundary invariant.
 Furthermore, $y \mapsto a(y) x_\xi$ is $k$-Lipschitz, with $k=\nr{x_\xi} / \delta' < 1$.
 Therefore, $f_\xi$ is one-to-one, and so it is a homeomorphism onto its image.
 
 Now, one can check that the norm of $f_\xi(y)$ is
 \begin{align*}
  \nr{f_\xi(y)} & \leq  \nr{y} + \nr{x_\xi} - \nr{y}\nr{x_\xi} / \delta' \\
      & = \delta' \left( \frac{\nr{y}}{\delta'} + \frac{\nr{x_\xi}}{\delta'} - \frac{\nr{y}}{\delta'}\frac{\nr{x_\xi}}{\delta'}  \right)
 \end{align*}
 and a simple study of the function $(\alpha,\beta) \mapsto \alpha + \beta - \alpha \beta$ on the domain $[0,1]^2$ shows that it is bounded by one.
 Therefore, the range of $f_\xi$ is included in the closed ball $\bar B(0,\delta')$, and it contains its boundary.
 By an argument of algebraic topology\footnote{Assume $w$ in the interior of the ball is not on the range of $f_\xi$.
 Then $f_\xi$ composed with $z \mapsto \delta' (z-w)/\nr{x-w}$ maps the closed ball onto the closed sphere continuously, which is a contradiction.}, $f_\xi$ is therefore onto, and is a homeomorphism of the closed ball.

 It is a quick check that the function $\xi \mapsto f_\xi$ is continuous on $\Xi$.
 Indeed, the continuity of $h_\eps$ implies that $\xi \mapsto (x_\xi,\xi) = h^{-1}\circ h_\eps (0,\xi)$ is a continuous function of $\xi$ (expressed in coordinates, the chart maps being implicit).
 In addition, the map $x \mapsto (\id(\cdot) + a(\cdot) x)$ is continuous % from $\bar B(0,\delta')$ to $(C(\bar B(0,\delta')), \nr{\cdot}_\infty)$.
 for the topology of uniform convergence on the space of functions on the ball.
 It gives continuity of $\xi \mapsto f_\xi$.

 Now, it is possible to extend the function $h_\eps$ on $U \simeq B(0,\delta') \times X$: in coordinates, it is defined as $h_\eps (y,\xi) = (f_\xi(y), \xi)$. It amounts to precomposing $h$ with $f_\xi$ on each leaf. It is clear by definition of the $f_\xi$ that this map restricts to $h_\eps$ on $\Xi$, as expected.
 The continuity of $\xi \mapsto f_\xi$ shows that $h_\eps$ is continuous on $U$.
 Finally, because of the boundary conditions put on $f_\xi$, the function $h_\eps$ can be extended continuously by $h$ on the complement of $U$.
 
 The last point to prove is the isotopy between $h_\eps$ and $h$. It can be done by replacing the definition of $f_\xi$ in Equation~\eqref{eq:f-xi} by the following one:
 \[
 f_{\xi,t} (y) = y + t a(y) x_\xi,
 \]
 for a parameter $t \in [0,1]$.
 Leaving the rest of the construction unchanged, the result is a family of functions $(h_{t\eps})_{t \in [0,1]}$. For $t=1$, the function is just $h_\eps$; for $t=0$, it is $h_0=h$. This provides the desired isotopy.
\end{proof}

\section{Proof of the theorems}

 \subsection{Preliminary lemmas}

 The function $h_\eps$ being a well defined homeomorphism $\Omega \ra \Omega'$, it is possible for all $T \in \Omega$ to define $h_{\eps,T}: \R^d \ra \R^d$ in the same way $h_T$ was defined.

\begin{lem}\label{lem:quasi-lipschitz}
 There is a $M > 0$ such that  for all $T \in \Xi$ and all $x \in \R^d$ satisfying $T-x \in \Xi$,
 \[
  \nr{h_{\eps,T} (x)} \leq M \nr{x}.
 \] 
\end{lem}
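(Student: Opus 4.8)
The plan is to exploit the cocycle relation for $h_\eps$ together with the fact that the admissible vectors (those $x$ with $T-x \in \Xi$) are exactly the points of the Delaunay set defining $\Xi$, which is uniformly discrete and relatively dense. First I would record the cocycle identity
\[
 h_{\eps,T}(x+y) = h_{\eps,T}(x) + h_{\eps,T-x}(y),
\]
which holds exactly as in the Remark following Theorem~\ref{thm:hT}, now applied to the homeomorphism $h_\eps$. Writing $\ron D$ for the pointing rule defining $\Xi$, the conditions $T \in \Xi$ and $T-x \in \Xi$ together say precisely that $x \in \ron D(T)$. Since all the $\ron D(T)$ for $T \in \Xi$ share the same separation and covering constants (the rule is local and the tiling is repetitive FLC), there are $r_0, R_0 > 0$, uniform over $T \in \Xi$, such that $\nr{x} \geq r_0$ for every nonzero admissible $x$, and every ball of radius $R_0$ meets $\ron D(T)$.

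The key local estimate is that \emph{short} admissible vectors have uniformly bounded image. Set $R_1 = 3R_0$ and consider
\[
 K = \big\{ (T,x) \in \Xi \times \R^d \tq T-x \in \Xi,\ \nr{x} \leq R_1 \big\}.
\]
As $\Xi$ is compact, $\overline{B(0,R_1)}$ is compact, and $T-x \in \Xi$ is a closed condition, the set $K$ is compact. The map $(T,x) \mapsto h_{\eps,T}(x)$ is continuous on $\Omega \times \R^d$ by Theorem~\ref{thm:hT} (applied to $h_\eps$), hence bounded on $K$; call $M_0$ this bound.

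Next I would decompose an arbitrary admissible $x$ into a chain of short admissible steps. Sampling the segment $[0,x]$ at spacing $\leq R_0$ and replacing each sample point by a point of $\ron D(T)$ within distance $R_0$ (relative density), I obtain $0 = q_0, q_1, \dots, q_m = x$ in $\ron D(T)$ with $\nr{q_{i+1}-q_i} \leq 3R_0 = R_1$ and $m \leq \nr{x}/R_0 + 2$. Each step is admissible from $T-q_i$, since $T-q_i \in \Xi$ and $(T-q_i)-(q_{i+1}-q_i) = T-q_{i+1} \in \Xi$, so $(T-q_i, q_{i+1}-q_i) \in K$. Telescoping the cocycle relation gives
\[
 h_{\eps,T}(x) = \sum_{i=0}^{m-1} h_{\eps,T-q_i}(q_{i+1}-q_i),
\]
whence $\nr{h_{\eps,T}(x)} \leq m M_0 \leq (\nr{x}/R_0 + 2)M_0$.

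Finally, the additive constant is absorbed using uniform discreteness: for $x \neq 0$ one has $\nr{x} \geq r_0$, so $2M_0 \leq (2M_0/r_0)\nr{x}$, and the estimate becomes $\nr{h_{\eps,T}(x)} \leq M\nr{x}$ with $M = M_0/R_0 + 2M_0/r_0$ (the case $x=0$ being trivial). The main obstacle is the uniform local bound $M_0$: it is precisely here that the continuity \emph{in two variables} from Theorem~\ref{thm:hT}, combined with compactness of $\Xi$ (and hence of $K$), is indispensable, because a priori $h_{\eps,T}(x)$ depends on the global homeomorphism and not merely on a local patch of $T$ around the origin. The conversion of the affine bound into a genuinely linear one is the one place where uniform discreteness of the transversal's Delaunay set is used in an essential way.
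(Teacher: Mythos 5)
Your proof is correct, but it takes a genuinely different route from the paper's. The paper extends $h_{\eps,T}|_{\ron D(T)}$ to a piecewise-affine map $g:\R^d\to\R^d$ by barycentric interpolation over a Delaunay triangulation locally derived from $\ron D$, and argues that finite local complexity leaves only finitely many underlying linear maps, so $g$ is globally Lipschitz and the bound follows since $g$ agrees with $h_{\eps,T}$ on $\ron D(T)$. You instead combine the cocycle identity with a compactness bound $M_0$ for $h_{\eps,T}(v)$ over short admissible $v$, and chain short return vectors along the segment $[0,x]$. Your approach is more elementary and arguably fills in a point the paper leaves terse: ``finitely many linear maps'' requires not just FLC of the triangulation but also that the vertex images $h_{\eps,T}(v_k)$ depend on $T$ only through a bounded patch (in effect Lemma~\ref{lem:vect-retour}, which the paper only proves afterwards), whereas you need only boundedness of a continuous function on the compact set $K$ --- exactly the device the paper itself uses in the proof of Corollary~\ref{cor:lipschitz}, and your chaining step is the same idea as Lemma~\ref{lem:jump} and the algorithm in the proof of Theorem~\ref{thm:main}. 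What the paper's construction buys in exchange is the explicit local map $g$, which is reused in Section~\ref{sec:deformations} to define the deformation cocycle, so the triangulation there is not merely a vehicle for the Lipschitz estimate. All the individual steps of your argument check out: $K$ is closed in $\Xi\times\overline{B(0,R_1)}$ hence compact, joint continuity comes from Theorem~\ref{thm:hT} applied to $h_\eps$ (a homeomorphism by Lemma~\ref{lem:redressement}), the endpoints $q_0=0$ and $q_m=x$ can be taken exactly since both lie in $\ron D(T)$, and the uniform discreteness constant $r_0$ legitimately absorbs the additive term.
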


\begin{proof}
 Let $T \in \Xi$ and $T'=h_\eps(T) \in \Xi'$. Let $\ron D$ and $\ron D'$ be the pointing rules associated with transversal $\Xi$ and $\Xi'$ respectively.
 It immediate to check that $x \in \ron D (T)$ if and only if $h_{\eps,T} (x) \in \ron D' (T')$.
 
 Let $\ron T$ be a triangulation rule locally derived from $\ron D$, such that the set of vertices is exactly $\ron D$. A triangulation is a tiling where each tile is a $d$-simplex, and tiles meet face-to-face (the faces being lower-dimensional simplices).
 One well known way to do so is to use Delaunay triangulations of $\R^d$ with vertices in $\ron D (T)$.
 Such triangulations are dual to the Voronoi construction. In some (non-generic) cases, the Delaunay triangulation is not unique, but by making consistent choices, it is possible to ensure that $\ron T (T)$ is locally derived from~$T$. See~\cite{Sch93}, for a reference.
 
 Let $g:\R^d \ra \R^d$ be the function defined as follows:
 given $x \in \R^d$, it belongs to a simplex, so it is uniquely written as a convex combination (with non-negative coefficients) of its vertices, which are elements in $\ron D$:
 \[
  x = \lambda_0 v_0 + \lambda_1 v_1 + \ldots + \lambda_d v_{d}, \quad \lambda_k \geq 0, \quad \sum_{k=0}^d \lambda_k = 1.
 \]
 Then, define
 \[
  g(x) = \sum_{k=0}^{d}\lambda_k h_{\eps,T} (v_k).
 \]
 This map $g$ is onto, but does not need to be one-to-one, see Figure~\ref{fig:triang-wrong}. It is affine by parts, and using finite local complexity of $\ron T (T)$, there are finitely may linear maps underlying the affine maps. A consequence of this is that $g$ is Lipschitz, with coefficient $M$.
 Finally, $g$ agrees with $h_{\eps, T}$ on $\ron D$.
 
 \begin{figure}[ht]
 \begin{center}
  \includegraphics{./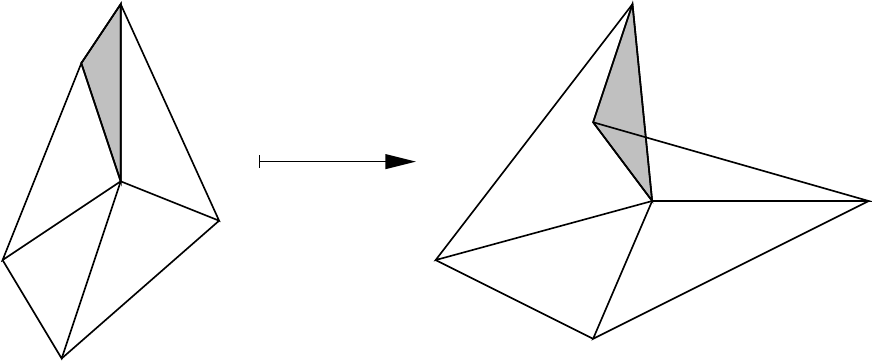}
 \end{center}
 \caption{A local pattern of the triangulation can be sent by $g$ to a pattern which is combinatorially a triangulation, but with an overlap in the geometric realization. Remark that the shaded triangle has a ``wrong'' orientation on the right. In this case, the map $g$ is not one-to-one.}
 \label{fig:triang-wrong}
\end{figure}

 Now, let us prove the lemma.
 Let $x \in \R^d$ such that $T-x \in \Xi$ (that is: let $x \in \ron D$).
 Then
 \begin{align*}
  \nr{h_{\eps,T}(x)} & = \nr{h_{\eps,T}(x)-h_{\eps,T}(0)} \\
    & = \nr{g(x) - g(0)} \\
    & \leq M \nr{x-0} = M\nr{x}.
 \end{align*}
\end{proof}

This lemma has the following consequence.
\begin{cor}\label{cor:lipschitz}
  Let $h: \Omega \lra \Omega'$ be a homeomorphism between two aperiodic repetitive FLC tiling spaces.
  There is a $M > 0$ and a $C > 0$ such that  for all $T \in \Omega$ and all $x \in \R^d$,
 \[
  \nr{h_{T} (x)} \leq M \nr{x} + C.
 \] 
\end{cor}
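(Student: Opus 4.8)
The plan is to derive the inequality for $h$ from the bound for $h_\eps$ provided by Lemma~\ref{lem:quasi-lipschitz}, and then to upgrade that bound from transversal return vectors to arbitrary pairs $(T,x)$. First I would compare the cocycles of $h$ and $h_\eps$. Writing $h_\eps = h - s$ with $\nr{s} \leq \eps$ (Lemma~\ref{lem:redressement}) and unwinding the two defining relations $h(T-x) = h(T) - h_T(x)$ and $h_\eps(T-x) = h_\eps(T) - h_{\eps,T}(x)$, one obtains
\[
 h_T(x) = h_{\eps,T}(x) + s(T) - s(T-x), \qquad \text{hence} \qquad \nr{h_T(x)} \leq \nr{h_{\eps,T}(x)} + 2\eps .
\]
It therefore suffices to prove an estimate $\nr{h_{\eps,T}(x)} \leq M\nr{x} + C'$ valid for all $T \in \Omega$ and all $x \in \R^d$; the constant in the corollary is then $C = C' + 2\eps$.

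The core step is to remove the two restrictions in Lemma~\ref{lem:quasi-lipschitz}, namely that both $T$ and $T-x$ lie on $\Xi$. Let $\ron D$ be the pointing rule of $\Xi$. By finite local complexity the gaps of the Delaunay sets $\ron D(T)$ are uniformly bounded over $\Omega$, so there is $R_0 > 0$ such that for every $T$ and every $y \in \R^d$ some point of $\ron D(T)$ lies within distance $R_0$ of $y$. Given $T$ and $x$, I would choose $a \in \ron D(T)$ with $\nr{a} \leq R_0$ and $c \in \ron D(T-x)$ with $\nr{c} \leq R_0$, and set $x'' = x + c$, so that $x'' \in \ron D(T)$ and $\nr{x - x''} \leq R_0$. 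Putting $\tilde T = T - a$, the translation-equivariance of $\ron D$ yields $\tilde T \in \Xi$, $\tilde T - (x'' - a) = T - x'' \in \Xi$, and $x'' - a \in \ron D(\tilde T)$, so that $x'' - a$ is a genuine transversal return vector for $\tilde T$. Two applications of the cocycle identity $h_{\eps,T}(u+v) = h_{\eps,T}(u) + h_{\eps,T-u}(v)$ to the splitting $x = a + (x''-a) + (x-x'')$ give
\[
 h_{\eps,T}(x) = h_{\eps,T}(a) + h_{\eps,\tilde T}(x'' - a) + h_{\eps, T - x''}(x - x'') .
\]

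Now I would bound the three terms separately. The middle term is exactly of the kind controlled by Lemma~\ref{lem:quasi-lipschitz}, giving $\nr{h_{\eps,\tilde T}(x''-a)} \leq M\nr{x''-a} \leq M(\nr{x} + 2R_0)$. The two outer terms have displacement vectors $a$ and $x-x'' = -c$, both of norm at most $R_0$; since $(T,y) \mapsto h_{\eps,T}(y)$ is continuous in both variables (Theorem~\ref{thm:hT} applied to the homeomorphism $h_\eps$) and $\Omega \times \overline{B(0,R_0)}$ is compact, the quantity $\nr{h_{\eps,T}(y)}$ admits a uniform bound $C_0$ on this set. Summing the three estimates yields $\nr{h_{\eps,T}(x)} \leq M\nr{x} + (2MR_0 + 2C_0)$, which is the required inequality with $C' = 2MR_0 + 2C_0$.

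The main obstacle is the core step. One must (i) secure a relative-density constant $R_0$ that is uniform over all of $\Omega$—this is precisely where finite local complexity and compactness of $\Omega$ are used—and (ii) arrange the decomposition of the path from $T$ to $T-x$ so that exactly one segment is a transversal return vector while the two correction segments have bounded length. The bookkeeping with the cocycle and the translation-equivariance of $\ron D$ is the delicate part; once it is in place, Lemma~\ref{lem:quasi-lipschitz} handles the long segment and a routine compactness argument disposes of the short ones.
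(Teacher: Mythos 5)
Your proposal is correct and follows essentially the same route as the paper: choose nearby points of $\ron D(T)$ and $\ron D(T-x)$ to split $x$ into two short correction vectors plus one genuine return vector between points of $\Xi$, bound the middle piece by Lemma~\ref{lem:quasi-lipschitz} and the outer pieces by continuity of $(T,y)\mapsto h_{\eps,T}(y)$ on the compact set $\Omega\times\overline{B(0,R_0)}$, then pass from $h_\eps$ back to $h$ at the cost of a bounded additive constant. Your explicit identity $h_T(x)=h_{\eps,T}(x)+s(T)-s(T-x)$ is a slightly more careful bookkeeping of that last step than the paper's, but the argument is the same.
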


\begin{proof}
 Let $\Xi$ and $\Xi'$ be the vertical transversal involved in the construction of $h_\eps$, and let $\ron D$ be the local pointing rule associated with $\Xi$.
 Using the Delaunay property of $\ron D (T)$ for $T \in \Omega$, there is $R > 0$ such that any ball $B(x,R)$ intersects $\ron D (T)$, for any $T \in \Omega$.
 
 Using continuity of $(T,x) \mapsto h_{\eps,T}(x)$ (see Theorem~\ref{thm:hT}), the image of the compact set $\Omega \times \overline{B(0,R)}$ is compact, and in particular is bounded. It is therefore included in $B(0,c)$ for some~$c$.
 
 Let now be $x \in \R^d$ and $T \in \Omega$. Then there are $y,z \in \R^d$ of length less than $R$ such that $T-y \in \Xi$ and $T-x-z \in \Xi$. One can write:
 \begin{align*}
  \nr{h_{\eps,T}(x)} & = \nr{h_{\eps,T}(y + (x+z-y) -z)} \\
     & = \nr{h_{\eps,T}(y) + h_{\eps,T-y} (x+z-y) + h_{\eps,T-x-z}(-z)} \\
     & \leq c + M(\nr{x} + \nr{y} + \nr{z}) + c \\
     & \leq 2c +2MR +M\nr{x}.
 \end{align*}
 The equality from first line to second uses the definition of $h_{\eps,T}$, the first inequality follows from Lemma~\ref{lem:quasi-lipschitz}.
 In the end, since $\nr{h_T(x)-h_{\eps,T}(x)} \leq \eps$ for all $T$ and $x$, the result follows, with $C=2c+2MR + \eps$.
\end{proof}

These results are key for estimating the size of $h_T(B(0,R))$ as $R$ grows.
A few additional lemmas still need to be proved before tackling the main theorem.

The two following lemmas deal with the following problems: how can a given $x \in \R^d$ be approximated by a sum of small ``return vectors'' (Definition~\ref{def:return-vect}); and how $h_{\eps,T}(v)$ is locally constant in $T$ when $v$ is a small return vector.

\begin{lem}\label{lem:jump}
 Let $\ron D$ be a Delaunay set which is $R$-relatively dense, that is:
 \[
  \bigcup_{p \in \ron D} B(p,R) = \R^d.
 \] 
 Let $x \in \ron D$ and $y \in \R^d$, such that $\nr{x-y} \geq 2R$.
 Then there exists $x' \in \ron D$, such that
 \[
  \nr{x'-x} \leq 3R \quad \text{and} \quad \nr{x'-y} \leq \nr{x-y}-R.
 \]
\end{lem}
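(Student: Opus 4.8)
The plan is to make definite progress from $x$ toward $y$ by stepping a controlled distance along the segment $[x,y]$ and then correcting to the nearest point of $\ron D$. First I would set $u = (y-x)/\nr{y-x}$, the unit vector pointing from $x$ to $y$ (well defined since $\nr{x-y} \geq 2R > 0$), and introduce the auxiliary point $w = x + 2R u$. Because $\nr{y-x} \geq 2R$, the point $w$ lies on the segment $[x,y]$ and does not overshoot $y$, so that $y - w = (\nr{y-x} - 2R)u$ and hence $\nr{w-y} = \nr{x-y} - 2R$.

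Next I would invoke $R$-relative density. Since $\bigcup_{p \in \ron D} B(p,R) = \R^d$, there is some $x' \in \ron D$ with $\nr{x' - w} \leq R$. The two required inequalities then follow from the triangle inequality. For the first,
\[
 \nr{x'-x} \leq \nr{x'-w} + \nr{w-x} \leq R + 2R = 3R.
\]
For the second,
\[
 \nr{x'-y} \leq \nr{x'-w} + \nr{w-y} \leq R + \big(\nr{x-y} - 2R\big) = \nr{x-y} - R,
\]
which is exactly the claimed bound. This choice of $x'$ therefore satisfies both conditions simultaneously.

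There is no serious obstacle here; the only point requiring care is the calibration of the step length. A naive step of size $R$ would not suffice: after correcting to a point of $\ron D$ up to distance $R$, the net displacement toward $y$ could be entirely cancelled, giving no guaranteed decrease in distance to $y$. Stepping by $2R$ --- exactly the density radius plus the desired gain $R$ --- ensures that even in the worst case the correction leaves at least $R$ of genuine progress, while keeping $x'$ within $3R$ of $x$. The hypothesis $\nr{x-y} \geq 2R$ is used precisely to guarantee that $w$ stays on $[x,y]$, so that $\nr{w-y} = \nr{x-y} - 2R$ holds with equality and the estimate above points in the right direction.
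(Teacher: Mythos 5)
Your proof is correct and is essentially identical to the paper's: the auxiliary point $w = x + 2R\,(y-x)/\nr{y-x}$ is exactly the point $z$ used in the paper, and both arguments conclude via relative density and the same two triangle-inequality estimates. No issues.
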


\begin{proof}
 Define
 \[
  z = x - 2R \frac{x-y}{\nr{x-y}}.
 \]
 By the Delaunay property, there exists $x' \in \ron D$ which is within distance $R$ of $z$.
 It is a quick check that $\nr{x-x'} \leq 3R$, and
 \begin{align*}
  \nr{x'-y} & \leq \nr{x'-z} + \nr{z-y} \\
            & \leq R + \nr{x-y}-2R  \\
            &  = \nr{x-y}-R.
 \end{align*}
\end{proof}

\begin{defin}\label{def:return-vect}
 Let $\Omega$ be a tiling space, and $\Xi$ be a canonical transversal associated with the local pointing rule $\ron D$.
 Then the set
 \[
  \ron V = \{v \in \R^d \tq \exists T \in \Xi, \ T-v \in \Xi \}
 \]
 is discrete (by FLC), and is called the set of return vectors on $\Xi$. By definition,
 \[
  \ron V = \bigcup_{T \in \Xi} \ron D (T).
 \]
 Given $v \in \ron V$, define
 \[
  \Xi_v = \{T \in \Xi \tq T-v \in \Xi \}.
 \]
 By finite local complexity, it is a clopen subset of $\Xi$.
\end{defin}

 \begin{lem}\label{lem:vect-retour}
  Let $h_\eps: \Omega \lra \Omega'$ be a homeomorphism, which sends a transversal $\Xi$ to a transversal $\Xi'$. Let $\ron V$ be the set of return vectors on $\Xi$.
  
  Let $\lambda > 0$, and $\ron V_\lambda$ be the set of all return vectors of length at most $\lambda$.
  Then there exists $\nu > 0$ such that for all $v \in \ron V_\lambda$ and all $T_1,T_2 \in \Xi_v$,
  \[
   D(T_1,T_2) \leq \nu \Longrightarrow h_{\eps,T_1}(v) = h_{\eps,T_2}(v).
  \]
 \end{lem}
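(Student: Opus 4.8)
The plan is to exploit that $h_{\eps,T}(v)$ is forced to take values in a \emph{discrete} set, so that its continuous dependence on $T$ upgrades to local constancy, and then to extract a uniform $\nu$ by a compactness argument over the finitely many vectors in $\ron V_\lambda$.

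First I would observe that for $v \in \ron V_\lambda$ and $T \in \Xi_v$, the vector $w := h_{\eps,T}(v)$ is itself a return vector on $\Xi'$. Indeed $T$ and $T-v$ both lie in $\Xi$, so their images $h_\eps(T)$ and $h_\eps(T-v) = h_\eps(T) - w$ both lie in $\Xi' = h_\eps(\Xi)$; hence $w$ belongs to $\ron V'$, the (discrete, by FLC) set of return vectors on $\Xi'$. Moreover, Lemma~\ref{lem:quasi-lipschitz} gives $\nr{w} \leq M \nr{v} \leq M\lambda$, so in fact $w$ lies in the finite set $F := \ron V' \cap \overline{B(0,M\lambda)}$.

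Next, fix $v \in \ron V_\lambda$. By Theorem~\ref{thm:hT} (applied to the homeomorphism $h_\eps$), the map $T \mapsto h_{\eps,T}(v)$ is continuous on $\Xi_v$, and by the previous step it takes values in the discrete set $F$; a continuous map into a discrete set is locally constant. Its level sets therefore form a finite partition of $\Xi_v$ into clopen pieces. Since $\Xi_v$ is clopen in the compact space $\Xi$ it is itself compact, so these pieces are compact and pairwise disjoint. Finitely many pairwise disjoint compact subsets of a metric space are separated by a positive minimal distance; let $\nu_v$ be any positive number below this minimum. Then $D(T_1,T_2) \leq \nu_v$ forces $T_1, T_2$ into the same piece, i.e.\ $h_{\eps,T_1}(v) = h_{\eps,T_2}(v)$.

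Finally, since $\ron V$ is discrete and $\ron V_\lambda = \ron V \cap \overline{B(0,\lambda)}$ is bounded, $\ron V_\lambda$ is finite; setting $\nu := \min_{v \in \ron V_\lambda} \nu_v > 0$ yields the desired uniform constant. The only genuinely delicate point is the passage from continuity to a uniform threshold: one must first recognise that the target is discrete, so that continuity becomes local constancy, and then lean on compactness—both of $\Xi_v$ and the finiteness of $\ron V_\lambda$—to convert this qualitative statement into a single quantitative $\nu$.
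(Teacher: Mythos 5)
Your proposal is correct and follows essentially the same route as the paper: the image $h_{\eps,T}(v)$ is a return vector on $\Xi'$, so the continuous map $T \mapsto h_{\eps,T}(v)$ from the compact set $\Xi_v$ into a discrete set is locally constant with finite range, giving a $\nu_v$ for each of the finitely many $v \in \ron V_\lambda$. Your appeal to Lemma~\ref{lem:quasi-lipschitz} to bound the image is harmless but unnecessary, since compactness of $\Xi_v$ already forces the range to be finite.
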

 
 \begin{proof}
  Given $v \in \ron V_\lambda$, the map
  \[
   T \mapsto h_T (v)
  \]
  is continuous on $\Xi_v$, and it maps $v$ to return vectors on $\Xi'$ (indeed, by definition of $\Xi_v$, both $T$ and $T-v$ are in $\Xi$, and therefore their images are in $\Xi'$).
  It is a continuous map from a compact set to a discrete set (by finite local complexity). It has therefore finite range, and is locally constant. This gives a constant $\nu_v$, such that
  \[
   D(T_1,T_2) \leq \nu_v \Longrightarrow h_{\eps,T_1}(v) = h_{\eps,T_2}(v).
  \]
  Since $\ron V_\lambda$ is finite, the lemma follows.
 \end{proof}

 \subsection{Proof of Theorem~\ref{thm:main} on complexity}

 Let $\Xi$, $\Xi'$ and $h_\eps$ be given by Lemma~\ref{lem:redressement}. Denote by $\ron D$ and $\ron D'$ the local pointing rules associated with $\Xi$ and $\Xi'$.
 
 The strategy of this proof is to use Lemma~\ref{lem:quasi-lipschitz} to prove that whenever $T,T' \in \Xi$ disagree within radius $r$, then their images in $\Xi'$ need to disagree within radius at most $Mr$, for some constant $M$ which does not depend on $r$.
 It will prove that
  \[
   p_\Xi (r) \leq p_{\Xi'} (Mr).
  \]
 The other inequality will be obtained by repeating the same argument for $h_\eps^{-1}$.

 Let $R$ be the constant such that the Delaunay sets $\ron D (T)$ are $R$-relatively dense.
 Let $\nu$ be the constant given by Lemma~\ref{lem:vect-retour} for $\lambda=3R$, so that if $v \in \ron V_{3R}$ and $T_1,T_2 \in \Xi_v$ agree up to radius $\nu^{-1}$, then $h_{T_i}(v)$ does not depend on $i$.

 Let $\delta > 0$, which is assumed to be small with respect to the radius of the tiles, and smaller than $\nu$. Assume also that $\delta^{-1} > 3R$.
 By uniform continuity\footnote{In its contrapositive form.} of $h^{-1}$, there is $\eta > 0$ such that
 \[
  \forall T_1,T_2 \in \Xi, \ D(T_1,T_2) \geq (\delta^{-1}+3R)^{-1} \Rightarrow D(h(T_1),h(T_2)) \geq \eta.
 \]

 Let now $T_1,T_2 \in \Xi$, such that
 \[
  D(T_1,T_2) \geq (\delta^{-1} + r)^{-1},
 \]
 which means that these two tilings fail to agree on a ball of radius $r+\delta^{-1}$.
 Then, there exists $x \in \R^d$ with $\nr{x} \leq r$, such that $T_1-x$ and $T_2-x$ fail to agree on a ball of radius $\delta^{-1}$.
 The natural follow-up is to claim that $h(T_1)-h_{T_1}(x)$ and $h(T_2)-h_{T_2}(x)$ disagree within radius $\eta^{-1}$, and since $\nr{h_{T_i}(x)} \leq M \nr{x}$, then $h(T_1)$ and $h(T_2)$ disagree within radius $M r + \eta^{-1}$.
 This line of reasoning fails because there is no guarantee that $h_{T_1}(x) = h_{T_2}(x)$.
 Some precautions have to be taken.

 Run the following algorithm, which is illustrated by Figure~\ref{fig:algo}:
 \begin{enumerate}
  \item Initialize $k=0$, and start with $x_k=x_0=0$;
  \item If $T_1-x_k$ and $T_2-x_k$ do not agree on $B(0,3R+\delta^{-1})$, stop the algorithm.
    If they do agree (which means that $\nr{x-x_k}$ must be at least $3R$), continue to step~(3).
  \item Using Lemma~\ref{lem:jump}, there exists $x_{k+1} \in \ron D(T_i)$ such that $\nr{x_k-x_{k+1}} \leq 3R$, and $\nr{x-x_{k+1}} \leq \nr{x-x_k} -R$. Since $T_1-x_k$ and $T_2-x_k$ agree at least up to radius $3R$, the choice of $x_k$ can be made the same for $T_1$ and $T_2$.
  \item Return to step (ii).
 \end{enumerate}
 
 \begin{figure}
  \begin{center}
   \includegraphics{./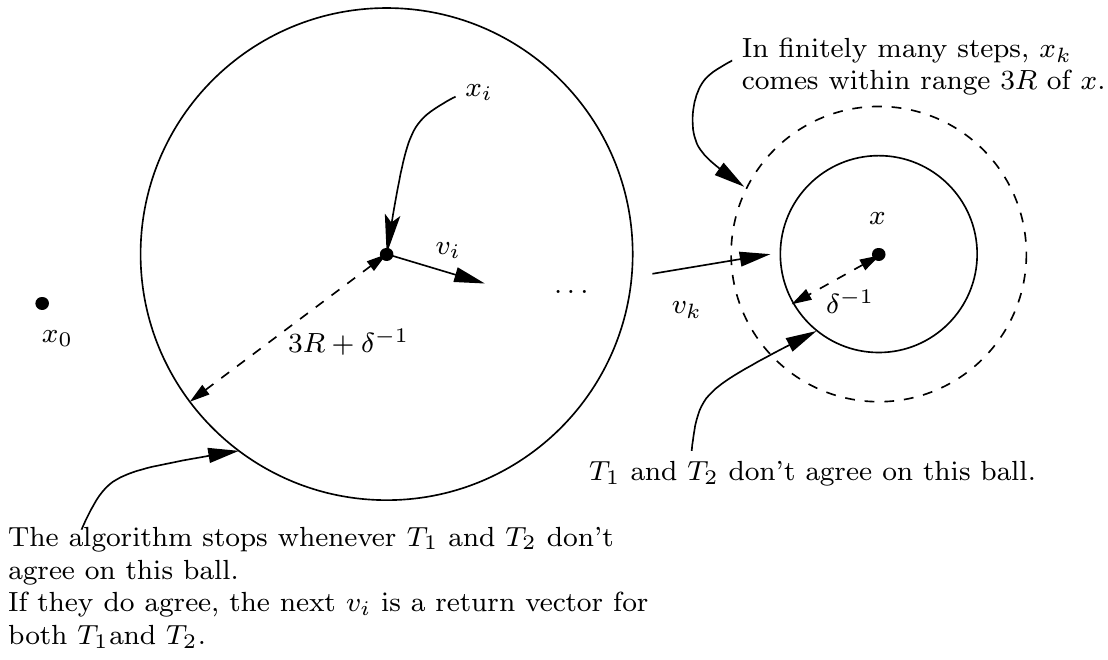}
  \end{center}
  \caption{An illustration of the algorithm.}
  \label{fig:algo}
 \end{figure}
 
 This algorithm terminates. Indeed, the distance between $x_k$ and $x$ is reduced at each step by at least $R$.
 After finitely many steps, $x_k$ and $x$ are within distance less than $3R$ of each other. When this happens, the stopping condition (2) applies, and the algorithm stops.
 The result of the algorithm is a finite sequence of vectors $(x_i)_{i=0,\ldots,k}$ such that each $v_i:=x_{i+1}-x_i$ is a return vector to $\Xi$ in $\ron V_{3R}$.
 Furthermore, for all $i<k$, $T_1-x_i$ and $T_2-x_i$ agree up to radius at least $\delta^{-1}$, which is greater than $\nu^{-1}$.
 Therefore, by Lemma~\ref{lem:vect-retour},
 \[
  h_{T_1-x_i} (v_i) = h_{T_2-x_i}(v_i),
 \]
 and these are return vectors to $\Xi'$, which we note $(w_i)_{i=0,\ldots,k-1}$.
 
 Because of the stopping condition of the algorithm above,
 \[
  D(T_1 - x_k, T_2 - x_k) \geq (3R+\delta^{-1})^{-1}, \text{ and therefore } D(h(T_1 - x_k), h(T_2 - x_k)) \geq \eta.
 \]
 Now, just remark that
 \begin{align*}
  h(T_i - x_k) & = h(T_i - v_0 - v_1 -\ldots -v_{k-1}) \\
     & = h(T_i) - w_0 - \ldots - w_{k-1}.
 \end{align*}
 In particular, there is a \emph{same} $y$ such that $h(T_i-x) = h(T_i) - y$, $i=1,2$.
 
 It was proved that $h(T_1)-y$ and $h(T_2)-y$ disagree on the ball of radius $\eta^{-1}$. Additionally, 
 \begin{align*}
  \nr{y} & \leq M \nr{x_k} \\
         & \leq M(\nr{x}+3R),
 \end{align*}
 where the first inequality if obtained by Lemma~\ref{lem:quasi-lipschitz}, and the second by the fact that $\nr{x-x_k} \leq 3R$.

 To sum up, it was proved that whenever two tilings $T_1,T_2 \in \Xi$ disagree within radius $\delta^{-1}+r$, then their images have to disagree within radius at most $M r +C$ for a constant $C= \eta^{-1}+3MR$.
 Therefore,
 \[
   p_\Xi (r + \delta^{-1}) \leq p_{\Xi'}(Mr+C).
 \]
 Up to changing $M$ and restricting $r$ to large values, one gets $p_\Xi (r) \leq p_{\Xi'} (Mr)$.
 
 The other inequality is obtained by reversing the roles of $h$ and $h^{-1}$.

 \subsection{Proof of Theorem~\ref{thm:repet} on repetitivity}
  
  The proof of this theorem is built on top of the proof of Theorem~\ref{thm:main}.
  The additional arguments are fairly simple, since repetitivity is very well translated in terms of recurrence and return vectors.
  How the norm of return vectors is changed by a homeomorphism $h$, in turn, is well evaluated by Lemma~\ref{lem:quasi-lipschitz}.

  Let $h:\Omega \lra \Omega'$ be a homeomorphism and $h_\eps$ be the approximation obtained from Lemma~\ref{lem:redressement}, which sends the canonical transversal $\Xi$ to the canonical transversal $\Xi'$.
  Let $\ron R$ and $\ron R'$ be the respective repetitivity functions of $\Omega$ and $\Omega'$ relative to $\Xi$ and $\Xi'$.
  Let also $M$ be the constant given by Lemma~\ref{lem:quasi-lipschitz}.
  
  The goal is to compute bounds on $\ron R(r)$ in terms of $\ron R'$.
  Let $P$ be a patch of radius $r$ of the form $T_0 \cap B(0,r)$ for $T_0 \in \Xi$.
  It defines a clopen set of $\Xi$ by
  \[
   U_{P} := \{ T \in \Xi \ ; \ T \cap B(0,r) = P  \}.
  \]
  We want to compute an upper bound for the the norm of a return vector from any tiling $T \in \Xi$ to $U_P$, in a way which is uniform in $P$.
  
  Let $V := h_\eps (U_{P})$. It is a clopen set, since $h_\eps$ is a homeomorphism.
  Now, consider the partition of $\Xi'$: $\{V_{P'} \}_{P'}$, where $P'$ ranges over all patches of the form $P'=\{T' \cap B(0,Mr) \ ; \ T' \in \Xi' \}$.
  It is claimed that any element of this partition is either completely included in $V$ or disjoint.
  Indeed, let $T'_1, T'_2 \in \Xi'$ be two tilings which agree up to radius $Mr$ (so that they belong to the same element of the partition defined above).
  It was proved in the proof of Theorem~\ref{thm:main} that whenever $T_1, T_2 \in \Xi$ disagree within radius $r$, then their images need to disagree within radius at most $Mr$.
  In a contrapositive form, it shows that $h_\eps^{-1}(T'_1)$ and $h_\eps^{-1}(T'_2)$ agree up to radius $r$. So they are either both in $U_P$, or none is.
  So either and $T'_1, T'_2$ are both in $V$, or none is. It results that
  \[
   h_\eps (U_P) = \bigsqcup_{i=1}^k V_{P'_i}, \quad \text{where } P'_i = T'_i \cap B(0,Mr) \text{ for some } T'_i \in \Xi'.
  \]
  
  To conclude, given $T \in \Xi$ let us find a return vector to $U_P$. Let $T':=h_\eps(T)$. By definition, there is $y$ be such that $T'-y \in V_{P'_i}$ and $\nr{y} \leq \ron R(Mr) + 1$.
  So $x := (h_\eps^{-1})_{T'}$ satisfies $T-x \in U_P$.
  To conclude, we use the Lipschitz-like estimation of Lemma~\ref{lem:quasi-lipschitz} on $(h_\eps^{-1})_{T'}$ to get:
  \[
    x \leq \tilde M (\ron R' (Mr) +1) + C.
  \]
  For any $T \in \Xi$ it is possible to obtain a return vector to $U_P$ in this way. It shows (up to renaming the constants and maybe restricting to $r$ big enough) that
  \[
   \ron R (r) \leq \lambda \ron R (Mr).
  \]
  The other inequality, again, is obtained by reversing the roles of $h$ and $h^{-1}$.

\section{Concluding remarks on deformations}
\label{sec:deformations}

There have been essentially two approaches for understanding tiling deformations.
In the first one---due to Clark and Sadun~\cite{CS06}---a deformation is defined as a map from the set of edges of a polytopal tiling to $\R^d$.
The image of an edge is a (a priori) different edge; fitting edges together, it defines a deformation of the tiling.
This map needs to be \emph{pattern-equivariant} in the sense that the image of an edge of $T$ should be determined by the local pattern around it.

The other point of view uses Kellendonk's pattern-equivariant differential forms~\cite{Kel03,Kel08}.
A function defined on $\R^d$ is $T$-equivariant for some fixed tiling $T$ if its value at $x$ only depends on the local pattern of $T$ around $x$. A deformation of a point pattern $\ron D (T)$ is then the image $f(\ron D (T))$ under a differentiable function $f: \R^d \ra \R^d$, with $T$-equivariant differential.

In both cases, a deformation is associated with an element in the cohomology of the tiling space with coefficients in $\R^d$: the \v{C}ech cohomology on the one hand (interpreted in~\cite{CS06} by using the pattern-equivariant formalism), and the pattern-equivariant De Rham cohomology on the other hand.
These cohomology groups turn out to agree~\cite{KP06,CS06,BK10}.

%In both cases, two deformations $f$ and $g$ can produce tilings with topologically conjugate tiling spaces.
%In the Clark--Sadun formalism, it happens when $f-g$ is \emph{asymptotically negligible}.
%In Kellendonk's formalism, it is the case when $df$ and $dg$ differ by a weakly equivariant coboundary. These two points of view are actually equivalent~\cite{Kel08}.

%In this section, we give a few indications on how some parts of this paper can be interpreted in terms of deformations.

In the work of Clark and Sadun, a deformation of an aperiodic, repetitive, FLC tiling of $\R^d$ by polytopes $T$ is given by a map $f$ defined on the set of oriented edges of $T$, valued in $\R^d$, such that for some $R > 0$,
\begin{enumerate}
 \item $f(e)$ only depends on the local configuration of $T$ around $e$ up to radius~$R$;
 \item if $e_1, \ldots, e_n$ is a closed circuit of edges in $T$, then $\sum_{i=1}^n f(e_i) = 0$.
\end{enumerate}
The second condition is a cocycle condition. Such a function $f$ is a coboundary if there is $s$ defined on vertices of $T$, such that
\begin{enumerate}
 \item $s(v)$ only depends on $(T-v) \cap B(0,R)$;
 \item $f(e) = s(e^+) - s(e^-)$, where $e^+$ and $e^-$ denote the target and source vertices of the oriented edge~$e$.
\end{enumerate}
Then they prove that the quotient of cocycles by coboundaries is exactly isomorphic to $\check \Ho^1 (\Omega;\R^d)$.
Furthermore, such a cocycle defines a deformation of the tiling as follows: assuming (without loss of generality) that $0$ is a vertex of $T$, define
\[
 F (v) = \sum_i f(e_i),
\]
where $(e_i)_{i}$ is a path of edges from $0$ to the vertex $v$. By definition of $f$, the value of $F(v)$ does not depend on the path chosen. Then, at least if $f$ is close enough to the identity, $F(T)$ defines a tiling of $\R^d$ (a sufficient condition is that the image of a tile should be a non-degenerate tile with the same orientation).
Furthermore, there is then a homeomorphism between the tiling spaces of $T$ and $F(T)$.

In light of this presentation, it appears that any homeomorphism between tiling spaces is related to a deformation.
Let $h:\Omega \ra \Omega'$ be a homeomorphism, and $h_\eps, \Xi, \Xi'$ be the data provided by Lemma~\ref{lem:redressement}.
Let $\ron D$ and $\ron D'$ be the local pointing rules associated with $\Xi$ and $\Xi'$, and let $T_0 \in \Xi$.
Let $\ron T$ be the local derivation introduced in the proof of Lemma~\ref{lem:quasi-lipschitz}, such that for any $T \in \Xi$, $\ron T (T)$ is a triangulation with vertices $\ron D (T)$.
Let $\ron T'$ be the image rule, with vertices given by the pointing rule $\ron D'$, and such that two vertices are linked by an edge in $\ron T'$ if and only if their preimages by $h_\eps$ are linked by an edge in $\ron T$.
Even though image of the triangulation $\ron T$ does not need to be a triangulation itself (see Figure~\ref{fig:triang-wrong}), the map $\ron T (T) \mapsto \ron T'(h_\eps(T))$ is induced by a deformation in the sense above.

Let us make it explicit: let $e_0$ be an edge of the triangulation $\ron T (T_0)$. Say $e_0 = (v,e)$ where $v$ is a vertex of $\ron T (T_0)$ (the source of $e_0$), and $e$ is a vector of $\R^d$.
Define $\Xi_e := \Xi \cap (\Xi-e)$ (the sub-transversal of all tilings in $\Xi$ which have an edge $e$ at the origin, pointing away).
Then the map $T \mapsto h_{\eps,T} (e)$ is continuous, and its image is a return vector to $\Xi'$.
By Lemma~\ref{lem:vect-retour}, the image of $e$ under $h_{\eps,T}$ only depends on the configuration of $T$ around the origin up to a certain radius, say $R_e$.
Then, define $f(e_0) = h_{\eps,T_0-v}(e)$.
Defining $f$ similarly for other edges, and letting $R = \max \{R_e \}$, $f$ defines a deformation cocycle with radius $R$.
Then $f$ induces a deformation $F: \ron T (T_0) \mapsto \ron T' (h_\eps(T_0))$.
This function $F$ extends to $\Xi$, and it coincides with $h_\eps$ on this set.
%It is related to the function $g$ defined in Lemma~\ref{lem:quasi-lipschitz}.

It may be enlightening to adopt a groupoid point-of-view on these deformations.
Let $\ron G$ be the reduction on $\Xi$ of $\Omega \rtimes \R^d$; it consists of the elements with both range and source in $\Xi$. Elements of $\ron G$ are of the form $(T,v)$ where $T \in \Xi$ and $v$ is a return vector of $T$ on $\Xi$.
Then the deformation data in Clark--Sadun's presentation is a partially defined map $f:\ron G \ra \R^d$. It is defined only on the elements $(T,v)$ where $v$ is a return vectors corresponding to an edge in~$T$.
That being said, those elements generate the groupoid and $f$ satisfies the cocycle condition. Therefore, it extends to a groupoid morphism $\ron G \ra \R^d$.
There is an equivalence between deformations in the sense of Clark--Sadun and groupoid morphisms $\ron G \ra \R^d$ such that $(T,v) \mapsto f(T,v)$ is locally constant in the variable $T \in \Xi$.
It is now very apparent that $h_\eps$ induces a deformation, since it was proved that $(T,x) \mapsto h_{\eps,T}(x)$ is locally constant in $T$, if one restricts to $T \in \Xi$.

The function $F = \restr{(h_{\eps})}{\Xi}$ defines a class in $\check \Ho^1 (\Omega; \R^d)$. This cohomology group is isomorphic to Kellendonk's strongly equivariant cohomology group $\Ho^1_{\pe, \mathrm{s}} (\Omega; \R^d)$.

The proof of the following theorem will appear in a future paper, as well as a more thorough interpretation of deformations in terms of groupoids and groupoid cohomology.

\begin{thm*}
 Let $h: \Omega \lra \Omega'$ be a homeomorphism between two aperiodic, FLC, repetitive tiling spaces.
Then $h$ defines an element in Kellendonk's mixed cohomology group $\Ho^1_{\pe,\mathrm{m}}(\Omega;\R^d)$. This element is the image in the mixed group of the deformation cocycle defined above under the quotient map
\[
\Ho^1_{\pe,\mathrm{s}}(\Omega;\R^d)  \lra \Ho^1_{\pe,\mathrm{m}}(\Omega;\R^d)
\]
(see~\cite{Kel08}). This element in the quotient does not depend on the choice of $\Xi$, $\Xi'$ or $h_\eps$.

Besides, if $h_1: \Omega \ra \Omega'$ and $h_2: \Omega \ra \Omega''$ are homeomorphisms which define the same element in the mixed group, then $\Omega'$ and $\Omega''$ are topologically conjugate.
\end{thm*}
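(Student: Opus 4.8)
The plan is to recognise the claimed element as the image, under the quotient $\Ho^1_{\pe,\s}\to\Ho^1_{\pe,\mathrm{m}}$, of the class of the deformation cocycle $f$ constructed above, and then to reduce the three assertions to Kellendonk's classification of deformations by the mixed cohomology. The first assertion is almost formal: the groupoid computation preceding the statement shows that $f$ satisfies the cocycle condition and is locally constant in $T\in\Xi$, so it represents a well-defined class $[f]\in\check\Ho^1(\Omega;\R^d)\cong\Ho^1_{\pe,\s}(\Omega;\R^d)$, and applying the quotient map yields the asserted element of $\Ho^1_{\pe,\mathrm{m}}(\Omega;\R^d)$.

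The heart of the independence statement is the identity
\[
 h_{\eps,T}(x) = h_T(x) + s(T-x) - s(T),
\]
which follows at once from $h_\eps = h-s$ (Lemma~\ref{lem:redressement}) together with the defining relations of $h_T$ and $h_{\eps,T}$. It exhibits the strongly pattern-equivariant cocycle $h_{\eps,T}$ as differing from the canonical continuous cocycle $h_T$ of Theorem~\ref{thm:hT} by the coboundary $\delta s$ of the continuous (hence weakly pattern-equivariant) function $s:\Omega\to\R^d$. Given two admissible choices $(\Xi_1,\Xi_1',h_{\eps,1})$ and $(\Xi_2,\Xi_2',h_{\eps,2})$, both approximating the same $h$, the associated global cocycles satisfy $h_{\eps,1,T}(x)-h_{\eps,2,T}(x)=\delta(s_1-s_2)(T,x)$ with $s_1-s_2$ continuous. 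Thus the difference $[f_1]-[f_2]$ is represented by a strongly pattern-equivariant form that is exact as a weakly pattern-equivariant form; this is precisely the subgroup which maps to zero in the mixed group. Hence $[f_1]$ and $[f_2]$ have the same image in $\Ho^1_{\pe,\mathrm{m}}(\Omega;\R^d)$, establishing independence of all choices.

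For the final and most substantial assertion, the plan is to realise $\Omega'$ and $\Omega''$ as deformations of $\Omega$ and to invoke the theorem of~\cite{Kel08} that the mixed class classifies deformations up to topological conjugacy. Concretely, the deformation $F_i:=\restr{(h_{\eps,i})}{\Xi_i}$ attached to the cocycle $f_i$ carries $\ron T(T_0)$ to $\ron T'(h_{\eps,i}(T_0))$, so that, by construction, the deformed tiling space is topologically conjugate to $\Omega'$ (respectively $\Omega''$), the conjugacy being induced by $h_{\eps,i}$ after the auxiliary triangulation decoration is discarded. If $h_1$ and $h_2$ determine the same element of the mixed group, then $[f_1]=[f_2]$ in $\Ho^1_{\pe,\mathrm{m}}(\Omega;\R^d)$, that is $f_1-f_2=\delta g$ for some continuous $g:\Omega\to\R^d$; following Kellendonk~\cite{Kel08} (and Clark--Sadun~\cite{CS06}), a \emph{continuous} coboundary produces a topological conjugacy between the two deformed spaces, which combined with the two conjugacies above yields $\Omega'\simeq\Omega''$.

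The main obstacle lies in this last step and in making the deformation picture fully rigorous. The map $g$ of Lemma~\ref{lem:quasi-lipschitz} need not be injective (Figure~\ref{fig:triang-wrong}), so the geometric realisation of $F_i$ may overlap and violate the non-degeneracy hypothesis of the classical deformation theory. The intended remedy is to work entirely at the level of groupoid morphisms $\ron G\to\R^d$ and their mixed cohomology classes, where no geometric non-degeneracy is required, and to re-derive the conjugacy criterion in that language, showing that a continuous coboundary $\delta g$ integrates to an $\R^d$-equivariant homeomorphism of the suspensions. Establishing this groupoid-cohomological form of the deformation theory, and verifying its compatibility with the strong, weak and mixed framework of~\cite{Kel08}, is the technical core deferred to the forthcoming paper.
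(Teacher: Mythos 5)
There is no proof of this statement in the paper to compare against: the author explicitly defers it (``The proof of the following theorem will appear in a future paper''), so your proposal can only be judged on its own merits. On those terms, the first two assertions are handled plausibly. The identity $h_{\eps,T}(x)=h_T(x)+s(T-x)-s(T)$ is correct (it follows from $h_\eps=h-s$ together with the defining relations of $h_T$ and $h_{\eps,T}$), and it is the right mechanism for independence: two approximations of the same $h$ differ by the coboundary of a continuous, hence weakly pattern-equivariant, function, which is exactly what dies in the mixed group. The one gap here is that $f_1$ and $f_2$ are defined on reductions of $\Omega\rtimes\R^d$ to \emph{different} transversals $\Xi_1,\Xi_2$ and on edges of different triangulations; before you can subtract them you must transport both classes to a common model of $\Ho^1_{\pe,\s}(\Omega;\R^d)\cong\check\Ho^1(\Omega;\R^d)$ and check that this transport is compatible with the quotient to the mixed group. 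This is standard but not free, and you do not address it.

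The genuine gap is the final assertion, which is also the substantial one, and your proposal does not prove it --- it restates it. You invoke ``Kellendonk's classification of deformations by mixed cohomology'' to conclude that equal mixed classes give conjugate spaces, but two things block a direct appeal to that theory. First, as you yourself note, the piecewise-affine map $g$ of Lemma~\ref{lem:quasi-lipschitz} need not be injective (Figure~\ref{fig:triang-wrong}), so the ``deformation'' $F_i$ may fail the non-degeneracy hypothesis under which Clark--Sadun and Kellendonk produce an actual deformed tiling space; the claim that the deformed space is ``by construction'' conjugate to $\Omega'$ (after discarding the triangulation decoration) therefore has no object to refer to, and $h_{\eps,i}$ itself is not a conjugacy. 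Second, the statement you would need --- that a strongly pattern-equivariant cocycle which is the coboundary of a continuous function integrates to an $\R^d$-equivariant homeomorphism between the corresponding suspensions, in a groupoid-cohomological framework where no geometric non-degeneracy is assumed --- is essentially the content of the theorem being proven. Declaring it ``the technical core deferred to the forthcoming paper'' mirrors what the author does, but it means your proposal is an outline consistent with the paper's intentions rather than a proof; the last paragraph of the statement remains unestablished.
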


This theorem indicates that Kellendonk's mixed group $\Ho^1_{\pe,m}(\Omega;\R^d)$ seems to be an appropriate invariant for classifying homeomorphisms of $\Omega$ in another FLC tiling space modulo conjugacy.
This should be compared to recent work of Kellendonk and Sadun~\cite{KS12} in which they show that some elements of the group $\Ho^1_{\pe,s}$ (the so-called ``infinitesimals'') classify topological conjugacy of FLC tiling spaces modulo invertible local derivation.

Finally, it is legitimate to wonder whether a homeomorphism between tiling spaces can be perturbed in order to preserve \emph{all} transversals (or equivalently, to choose $\Xi$ in such a way that the map $g$ in Lemma~\ref{lem:quasi-lipschitz} is a homeomorphism).

\begin{conj*}
 Given a homeomorphism $h: \Omega \ra \Omega'$ between two aperiodic repetitive FLC tiling spaces, there is an homeomorphism $h'$, isotope to $h$, such that for \emph{any} canonical transversal $\Xi$ in $\Omega$, $h'(\Xi)$ is a canonical transversal in $\Omega'$.
\end{conj*}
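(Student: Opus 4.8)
The plan is to turn the conjecture into a purely geometric statement about the piecewise-affine map $g$ built in the proof of Lemma~\ref{lem:quasi-lipschitz}, and then to remove the single visible obstruction to $g$ being injective, namely the orientation-reversing simplices of Figure~\ref{fig:triang-wrong}. Recall that Lemma~\ref{lem:redressement} replaces $h$ by an isotopic homeomorphism $h_\eps$ sending a chosen transversal $\Xi$ to $\Xi'$, and that by Lemma~\ref{lem:vect-retour} the cocycle $(T,x)\mapsto h_{\eps,T}(x)$ is locally constant in $T\in\Xi$. Hence the map $g$ that agrees with $h_{\eps,T}$ on $\ron D(T)$ and is affine on each simplex of the triangulation $\ron T(T)$ is \emph{local} in the sense of Rand--Sadun~\cite{RS09}: this is exactly the Clark--Sadun deformation $F$ of the edge cocycle $f=h_{\eps,\cdot}$ discussed in Section~\ref{sec:deformations}, and as such it carries every canonical transversal to a canonical transversal. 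Consequently, if $g$ is a homeomorphism of each leaf then the induced deformation $h'$ on $\Omega$ is the desired local homeomorphism, and a straightforward adaptation of the isotopy argument of Lemma~\ref{lem:redressement} shows $h'$ is isotopic to $h_\eps$, hence to $h$. Thus the conjecture is \emph{equivalent} to choosing the pointing rule $\ron D$ so that $g$ is a homeomorphism.

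The first step is to pin down the orientation. Since $h_T$ is a self-homeomorphism of $\R^d$ fixing the origin (Theorem~\ref{thm:hT}) which is proper by the quasi-bi-Lipschitz estimate of Corollary~\ref{cor:lipschitz} applied to $h$ and $h^{-1}$, it has a well-defined degree $\pm1$; by continuity of $(T,x)\mapsto h_T(x)$ this degree is locally constant, hence constant, on the connected space $\Omega$. Fix its sign $\sigma$. With this normalisation, $g$ is a local homeomorphism precisely when each image simplex is non-degenerate with orientation $\sigma$, and when in addition the images of the simplices in the star of every vertex wind exactly once (with sign $\sigma$) around the image of that vertex. The plan is then to kill these two defects using the freedoms that remain: the choice of $\ron D$ (equivalently, of the triangulation $\ron T$) and the isotopy class of $h$.

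Two complementary routes look promising. The \emph{refinement route} takes $\ron D$ to be a very dense pattern-equivariant Delaunay set, so that the simplices of $\ron T(T)$ are arbitrarily small, in the hope that the piecewise-affine interpolation of an orientation-preserving homeomorphism on a fine enough pattern-equivariant triangulation is automatically orientation-preserving on every simplex and has correct vertex links. The \emph{smoothing route} instead first isotopes $h$ so that each leafwise cocycle $h_T$ becomes a diffeomorphism of $\R^d$ with everywhere-positive Jacobian; one then uses that, for a $C^1$ diffeomorphism, a sufficiently fine triangulation yields positively oriented image simplices with embedded stars, so that $g$ is a homeomorphism. Here pattern-equivariance does double duty: because $\Omega$ is compact, a pattern-equivariant positive diffeomorphism has \emph{uniform} lower Jacobian and derivative bounds, which is exactly what makes a single mesh scale work simultaneously on the non-compact leaf. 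The delicate common point is that the smoothing, or the choice of $\ron D$, must be carried out coherently across the Cantor transversal, so that the resulting $h'$ is a homeomorphism of the whole solenoid and not merely of individual leaves.

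I expect the main obstacle to be the failure of small-scale metric control. Corollary~\ref{cor:lipschitz} bounds $h_T$ only up to an \emph{additive} constant, so at small scales $h_T$ may distort areas and angles arbitrarily; a homeomorphism can in principle be nearly folding, with infinitesimal Jacobian tending to zero, at every scale, and then no refinement of $\ron D$ eliminates the flipped simplices. This is why the smoothing route seems more robust, but it carries its own difficulty: the leafwise smoothing must depend continuously and pattern-equivariantly on the transverse Cantor variable, and it is conceivable that there is a genuine cohomological obstruction---living in the mixed group $\Ho^1_{\pe,\mathrm{m}}(\Omega;\R^d)$ attached to $h$ by the theorem above---to representing the deformation class of $h$ by a non-degenerate, consistently oriented cocycle. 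Deciding whether this obstruction always vanishes is, in my view, the true heart of the conjecture.
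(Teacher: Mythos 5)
This statement is stated in the paper as an open \emph{conjecture}: the paper offers no proof of it (the author only observes that it is equivalent to being able to choose $\Xi$ so that the map $g$ of Lemma~\ref{lem:quasi-lipschitz} is a homeomorphism), so there is no argument of the paper's to compare yours against. Your proposal is, by your own closing admission, a plan rather than a proof: you correctly reproduce the paper's reformulation (make $g$ injective), you correctly identify the obstruction pictured in Figure~\ref{fig:triang-wrong}, and your degree normalisation via properness of $h_T$ (Theorem~\ref{thm:hT} plus Corollary~\ref{cor:lipschitz}) is a sensible first step. But neither of your two routes is carried out, and the one concrete difficulty you name --- that Corollary~\ref{cor:lipschitz} gives only large-scale, additive-constant control, so that no refinement of $\ron D$ rules out folded simplices --- is precisely the reason the refinement route cannot be made to work as stated. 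The smoothing route replaces this by an unproved assertion that the leafwise cocycles $h_T$ can be simultaneously and transversally continuously smoothed to diffeomorphisms with uniformly positive Jacobian; that is not a known fact, and you rightly suspect it may hide a genuine obstruction. So the heart of the conjecture is untouched.

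There is also a specific step in your reduction that would fail as written. You claim that once $g$ is a leafwise homeomorphism, the induced map $h'$ is isotopic to $h_\eps$ by ``a straightforward adaptation of the isotopy argument of Lemma~\ref{lem:redressement}.'' The isotopy in that lemma works because $h_\eps$ differs from $h$ by a leafwise translation of norm at most $\eps$, supported in a single chart box; the paper explicitly warns that $g$ is \emph{a priori not within a small distance of $h$}, so no small-perturbation argument applies. The natural substitute, a leafwise straight-line homotopy from $h_\eps$ to the piecewise-affine map, requires every intermediate map to be a homeomorphism of the tiling space --- which is exactly the non-degeneracy problem you are trying to solve, now with a parameter. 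Until either the injectivity of $g$ (for some pattern-equivariant choice of $\ron D$) or the isotopy is actually established, the statement remains a conjecture, and your text should be presented as a discussion of strategies rather than as a proof.
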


\bibliographystyle{abbrv}

\bibliography{../../../biblio}{}

%\affiliationone{
%   A.\ Julien\\
%   Department of Mathematical Sciences, NTNU, NO-7491, Trondheim\\
%   Norway
%   \email{antoine.julien@math.ntnu.no}}

\end{document}